\documentclass[10pt,a4paper]{article}

\usepackage{graphicx}
\usepackage{color}
\usepackage{indentfirst}
\usepackage{amsmath,amssymb,bm}
\usepackage{cases}
\usepackage{latexsym,bm,amsmath,amssymb,amsthm}

\setlength{\textwidth}{15cm}
\setlength{\textheight}{21cm}
\setlength{\hoffset}{-0.9cm}
\setlength{\voffset}{0cm}
\setlength{\parskip}{3pt plus1pt minus1pt}

\begin{document}

\title{\bfseries  \Large Quasi-Einstein and Generalized Quasi-Einstein Warped Products with an Affine Connection}
\author{\normalsize Quan Qu \thanks{Corresponding author.
\newline \mbox{} \quad\,   E-mail addresses: quq453@nenu.edu.cn.
\newline  \mbox{} \quad\,   School of Mathematics and Statistics, Northeast Normal University, Changchun Jilin, 130024, China}}\date{}
\maketitle

\begin{abstract}
In this paper, we study the quasi-Einstein and generalized quasi-Einstein warped products with a semi-symmetric non-metric connection. We give the expressions of the Ricci tensors and scalar curvatures for the bases and fibres. In some cases we give some obstructions to the existence of the quasi-Einstein and generalized quasi-Einstein warped products with a semi-symmetric non-metric connection.
\paragraph{Keywords:}Warped products; semi-symmetric non-metric connection; Ricci tensor; scalar curvature; quasi-Einstein manifold; generalized quasi-Einstein manifold.
\end{abstract}
\paragraph{} \mbox{}\quad \, \normalsize 2010 Mathematics Subject Classification. 53B05, 53C25.
\section{\large Introduction}
The warped product $B \times _{f}F$ of two Riemannian manifolds $(B,g_{B})$ and $(F,g_{F})$ with a smooth function $f : B \to(0,\infty)$ is a product manifold of form $B \times F$ with the metric tensor $g = g_{B}\oplus f^2g_{F}$. Here, $(B,g_{B})$ is called the base manifold and $(F,g_{F})$ is called as the fiber manifold and $f$ is called as the warping function. The concept of warped products was first introduced by Bishop and O'Neill in [1] to construct examples of Riemannian manifolds with negative curvature. In [2], F. Dobarro and E. Dozo had studied the problem of showing when a Riemannian metric of constant scalar curvature can be produced on a product manifolds by a warped product construction. In [3], M.C. Chaki and R.K. Maity introduce the notion of quasi-Einstein manifold. Then the notion was generalized to generalized quasi-Einstein manifold in [4]. Later in [5], Dan Dumitru give some obstructions to the existence of quasi-Einstein warped products.

The definition of a semi-symmetric metric connection was given by H. Hayden in [6]. In 1970, K. Yano in [7] considered a semi-symmetric metric connection and studied some of its properties. In [8] and 9], Agashe and Chafle introduced the notion of a semi-symmetric non-metric connection and studied some of its properties and submanifolds of a Riemannian
manifold with semi-symmetric non-metric connections. Then in [10], S. Sular and C. \"Ozg\"ur studied the curvature of warped products with a semi-symmetric non-metric connection.

This paper is arranged as follows. In Section 2, we give the curvature of warped products with a semi-symmetric non-metric connection. In Section 3, we compute Ricci curvature and scalar curvature of quasi-Einstein warped products with a semi-symmetric non-metric connection, then we prove some obstructions to the existence of quasi-Einstein warped products with a semi-symmetric non-metric connection. In Section 4, we compute Ricci curvature and scalar curvature of generalized quasi-Einstein warped products with a semi-symmetric non-metric connection, then we prove some obstructions to the existence of generalized quasi-Einstein warped products with a semi-symmetric non-metric connection.

\section{\large Preliminaries}\label{sec2}
Let $ M $ be a Riemannian manifold with Riemannian metric $g$. A linear connection $\overline\nabla$ on a Riemannian manifold $M$ is called a semi-symmetric connection if the torsion tensor $T$ of the connection $\overline\nabla$
$$T(X, Y ) = \overline\nabla_{X}Y-\overline\nabla_{Y}X-[X,Y] \eqno{(1)}$$
satisfies  $$T(X, Y ) = \pi(Y)X-\pi(X)Y  \eqno{(2)}$$
where $\pi$ is a 1-form associated with the vector field $P$ on $M$ defined by $\pi(X)=g(X,P)$. $\overline\nabla$ is called a semi-symmetric metric connection if it satisfies $\overline\nabla g=0$. $\overline\nabla$ is called a semi-symmetric non-metric connection if it satisfies $\overline\nabla g\neq0$.

If $\nabla$ is the Levi-Civita connection of $M$, the semi-symmetric non-metric connection $\overline\nabla$ is given by
$$\overline\nabla_{X}Y=\nabla_{X}Y+\pi(Y)X. \eqno{(3)}$$
(see [8]). Let $R$ and $\overline R$ be the curvature tensors of $\nabla$ and $\overline\nabla$, respectively. Then $R$ and $\overline R$ are related by
$$\overline{R}(X,Y)Z=R(X,Y)Z+g(Z,\nabla_{X}P)Y-g(Z,\nabla_{Y}P)X+\pi(Z)[\pi(Y)X-\pi(X)Y]   \eqno{(4)}$$
for any vector fields $X,Y,Z$ on $M$, where the curvature $R$ is defined by $R(X,Y)=\nabla_{X}\nabla_{Y}-\nabla_{Y}\nabla_{X}-\nabla_{[X,Y]}.$ \par
We define the Ricci curvature and scalar curvature as follows:
$$Ric(X,Y)=\sum_{k}\varepsilon_{k}\langle R(X,E_{k})Y,E_{k}\rangle,$$
$$S=\sum_{k}\varepsilon_{k}Ric(E_{k},E_{k}),$$
where $E_{k}$ is an orthonormal base of $M$ with $\langle E_{k},E_{k}\rangle=\varepsilon_{k},\;\varepsilon_{k}=\pm1.$
\\The Hessian of $f$ is defined by $H^{f}(X,Y)=XYf-(\nabla_{X}Y)f,$\\
The Laplacian of $f$ is defined by $\Delta f=-Tr(H^{f}),$\\
The divergence of $P$ is defined by $divP=\sum_{k}\varepsilon_{k}\langle \nabla_{E_{k}}P,E_{k}\rangle.$\par

\newtheorem{Definition}{Definition}[section]
\begin{Definition}
Let $(B,g_{B})$ and $(F,g_{F})$ be two Riemannian manifolds, ${\rm dim}B=n_{1}$ and ${\rm dim}F=n_{2},\;f:B \to(0,\infty)$ be a smooth function. The {\bfseries warped product} $M=B\times _{f}F$ is the product manifold $B \times F$ with the metric tensor $g = g_{B}\oplus f^2g_{F}$. The function $f$ is called the warping function of the warped product. The warped product is called a {\bfseries simply Riemannian product} if $f$ is a constant function.
\end{Definition}

\begin{Definition}
A non-flat Riemannian manifold $(M, g)$ is said to be a {\bfseries quasi-Einstein} manifold if its Ricci tensor
$Ric^{M}$ satisfies the condition $Ric^{M}(X,Y)=ag(X,Y)+bA(X)A(Y)$ for every $X,Y\in\Gamma(TM),$ where $a,b$ are real scalars and $A$ is a non-zero 1-form on $M$ such that $A(X)=g(X,U)$ for all vector field $X\in\Gamma(TM),\;U$ being an unit vector field which is called the generator of the manifold. If $b=0$ then the manifold reduces to an Einstein space.
\end{Definition}

\begin{Definition}
A non-flat Riemannian manifold $(M, g)$ is said to be a {\bfseries generalized quasi-Einstein} manifold if its Ricci tensor $Ric^{M}$ is non-zero and satisfies the condition $Ric^{M}(X,Y)=ag(X,Y)+bA(X)A(Y)+cB(X)B(Y)$ for every $X,Y\in\Gamma(TM),$ where $a,b,c$ are real scalars and $A,B$ are two non-zero 1-form on $M$. The unit vector fields $U_{1}$ and $U_{2}$ corresponding to the 1-forms $A$ and respectively $B$ are defined by $A(X)=g(X,U_{1}),\;B(X)=g(X,U_{2})$,
and are orthogonal, i.e. $g(U_{1},U_{2})=0.$ If $c=0$ then the manifold reduces to a quasi-Einstein manifold.
\end{Definition}

We need the following two lemmas from [10]:
\newtheorem{Lemma}[Definition]{Lemma}
\begin{Lemma}
Let $M=B\times _{f}F$ be a warped product with metric $g=g_{B}\oplus f^2g_{F}$. If $X,Y,Z\in\Gamma(TB),\;U,V,W\in\Gamma(TF)$ and $P\in\Gamma(TB),$ then\\
$(1)\overline R(X,Y)Z=\overline R^B(X,Y)Z; $  \\
$(2)\overline R(V,X)Y=-\frac{H^f_{B}(X,Y)}{f}V-g(Y,\nabla_{X}P)V+\pi(X)\pi(Y)V;$   \\
$(3)\overline R(X,Y)V=\overline R(V,W)X=0; $   \\
$(4)\overline R(X,V)W=-g(V,W)\Big[\frac{\nabla_{X}^B(grad_{B}f)}{f}+\frac{Pf}{f}X\Big];$  \\
$(5)\overline R(U,V)W=R^F(U,V)W-\Big[\frac{|grad_{B}f|^2_{B}}{f^2}+\frac{Pf}{f}\Big]\Big[g(V,W)U-g(U,W)V\Big]. $
\end{Lemma}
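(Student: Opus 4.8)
The plan is to reduce everything to two ingredients: the relation (4) between $\overline R$ and the Levi-Civita curvature $R$, which we already have, and the classical Bishop--O'Neill curvature identities for the warped product $B\times_f F$ with its Levi-Civita connection. I will take as known that for the Levi-Civita connection one has $R(X,Y)Z=R^B(X,Y)Z$, $R(X,Y)V=R(V,W)X=0$, $R(V,X)Y=-\frac{H^f_B(X,Y)}{f}V$, $R(X,V)W=-\frac{g(V,W)}{f}\nabla^B_X(grad_B f)$, and $R(U,V)W=R^F(U,V)W-\frac{|grad_B f|^2_B}{f^2}\big[g(V,W)U-g(U,W)V\big]$, for $X,Y,Z\in\Gamma(TB)$ and $U,V,W\in\Gamma(TF)$. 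The whole lemma then follows by inserting each of these into (4) and simplifying the three correction terms $g(Z,\nabla_X P)Y-g(Z,\nabla_Y P)X+\pi(Z)[\pi(Y)X-\pi(X)Y]$ in each of the five cases.

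The simplification rests on how $P$, $\pi$ and $\nabla P$ interact with the horizontal--vertical splitting. Since $P\in\Gamma(TB)$ is horizontal, $\pi$ annihilates every vertical vector, $\pi(V)=g(V,P)=0$ for $V\in\Gamma(TF)$, while on horizontal vectors $\pi$ agrees with the corresponding $1$-form on $B$. For the covariant derivative I will use that $\nabla_X P=\nabla^B_X P$ is again horizontal when $X$ is horizontal, whereas for $V$ vertical the mixed warped-product formula gives $\nabla_V P=\frac{Pf}{f}V$, which is vertical. Consequently $g(\cdot,\nabla_X P)$ kills vertical arguments and $g(\cdot,\nabla_V P)$ kills horizontal arguments, and this orthogonality is exactly what forces most correction terms to collapse.

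With these rules the five cases should fall out by substitution. In (1) all of $X,Y,Z,P$ are horizontal, so (4) on $M$ and (4) on $B$ carry identical correction terms and $R(X,Y)Z=R^B(X,Y)Z$, giving $\overline R(X,Y)Z=\overline R^B(X,Y)Z$. In (2) the term $g(Y,\nabla_V P)X$ vanishes since $g(Y,\nabla_V P)=\frac{Pf}{f}g(Y,V)=0$, the factor $\pi(V)=0$ removes one piece of the last bracket, and what survives is $-\frac{H^f_B(X,Y)}{f}V-g(Y,\nabla^B_X P)V+\pi(X)\pi(Y)V$. Case (3) is immediate: both $R$-terms vanish, and every surviving correction term contains either $\pi(V)=0$ or a pairing of a horizontal vector against a vector of the opposite type. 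In (4) the only nonzero correction is $-g(W,\nabla_V P)X=-\frac{Pf}{f}g(V,W)X$, which combines with $R(X,V)W$ to give the stated $-g(V,W)\big[\frac{\nabla^B_X(grad_B f)}{f}+\frac{Pf}{f}X\big]$. Finally in (5) the terms $g(W,\nabla_U P)V-g(W,\nabla_V P)U$ produce exactly $-\frac{Pf}{f}\big[g(V,W)U-g(U,W)V\big]$, which adds to the $\frac{|grad_B f|^2_B}{f^2}$ coefficient coming from the Bishop--O'Neill formula.

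I expect the only genuine obstacle to be bookkeeping rather than conceptual. One must invoke the correct Bishop--O'Neill identities with the correct signs and with the Hessian/gradient conventions matching the paper's $H^f$ and $grad_B f$, and, above all, compute $\nabla_V P=\frac{Pf}{f}V$ correctly, since this single identity is what injects the extra $\frac{Pf}{f}$ terms that distinguish the semi-symmetric non-metric connection from the Levi-Civita case. Tracking which correction terms land in the horizontal versus the vertical summand, and hence which are forced to vanish by orthogonality of $\Gamma(TB)$ and $\Gamma(TF)$, is where the care is needed.
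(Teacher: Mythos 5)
Your proposal is correct, but be aware that there is no in-paper proof to compare it with: Lemma 2.4 is stated without proof, imported from [10] (Sular--\"Ozg\"ur), so your argument stands or falls on its own. Judged on its own merits it goes through. Relation (4) holds for arbitrary vector fields on $M$, and the three auxiliary facts you rely on are all true in a warped product with $P\in\Gamma(TB)$: $\pi(V)=g(V,P)=0$ for vertical $V$; $\nabla_{X}P=\nabla^{B}_{X}P$ is horizontal for horizontal $X$ (the leaves $B\times\{q\}$ are totally geodesic); and $\nabla_{V}P=\frac{Pf}{f}V$ from the mixed formula $\nabla_{V}X=\frac{Xf}{f}V$. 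Substituting into (4), all five cases check out, including the sign bookkeeping in (5), where $g(W,\nabla_{U}P)V-g(W,\nabla_{V}P)U=\frac{Pf}{f}\big[g(U,W)V-g(V,W)U\big]$ merges with the $\frac{|grad_{B}f|^{2}_{B}}{f^{2}}$ term exactly as stated. One small point deserves to be made explicit in case (1): the residual terms $g_{B}(Z,\nabla^{B}_{X}P)Y-g_{B}(Z,\nabla^{B}_{Y}P)X+\pi(Z)[\pi(Y)X-\pi(X)Y]$ reassemble into $\overline R^{B}(X,Y)Z$ only because the restriction of $\overline\nabla$ to $B$ is again the semi-symmetric non-metric connection on $(B,g_{B})$ determined by the same $P$, so that (4) may be applied on $B$; you use this silently. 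Your only imported ingredients are the Levi-Civita warped-product identities, which you quote with the correct signs for the convention $R(X,Y)=\nabla_{X}\nabla_{Y}-\nabla_{Y}\nabla_{X}-\nabla_{[X,Y]}$ used here (setting $P=0$ in the lemma recovers them, a quick consistency check). Compared with the alternative of first computing $\overline\nabla$ on the warped product and then evaluating $\overline R$ from the definition, your reduction to (4) plus the classical Bishop--O'Neill formulas is shorter and localizes everything new in the single computation $\nabla_{V}P=\frac{Pf}{f}V$, which is precisely where the extra $\frac{Pf}{f}$ terms originate.
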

\begin{Lemma}
Let $M=B\times _{f}F$ be a warped product with metric $g=g_{B}\oplus f^2g_{F}$. If $X,Y,Z\in\Gamma(TB),\;U,V,W\in\Gamma(TF)$ and $P\in\Gamma(TF),$ then\\
$(1)\overline R(X,Y)Z=R^B(X,Y)Z; $  \\
$(2)\overline R(V,X)Y=-\frac{H^f_{B}(X,Y)}{f}V-\pi(V)\frac{Yf}{f}X;$   \\
$(3)\overline R(X,Y)V=\pi(V)\Big[\frac{Xf}{f}Y-\frac{Yf}{f}X\Big]; $   \\
$(4)\overline R(V,W)X=\frac{Xf}{f}[\pi(W)V-\pi(V)W]; $   \\
$(5)\overline R(X,V)W=-g(V,W)\frac{\nabla_{X}^B(grad_{B}f)}{f}+\frac{Xf}{f}\pi(W)V-g(W,\nabla_{V}P)X+\pi(W)\pi(V)X;$ \\
$(6)\overline R(U,V)W=R^F(U,V)W-\frac{|grad_{B}f|^2_{B}}{f^2}[g(V,W)U-g(U,W)V]+g(W,\nabla_{U}P)V
-g(W,\nabla_{V}P)U\\ \mbox{}\quad\quad\quad\quad\quad\quad\quad+\pi(W)[\pi(V)U-\pi(U)V]. $
\end{Lemma}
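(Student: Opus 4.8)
The plan is to obtain all six identities by specializing the single curvature-transformation formula (4) to every admissible placement of base fields $X,Y,Z\in\Gamma(TB)$ and fibre fields $U,V,W\in\Gamma(TF)$, and then to simplify the outputs using two preliminary observations that hold because now $P\in\Gamma(TF)$. First, since $g=g_{B}\oplus f^{2}g_{F}$ is block-diagonal and $P$ is tangent to the fibre, the associated $1$-form $\pi=g(\cdot,P)$ annihilates every base field: $\pi(X)=g(X,P)=0$ for $X\in\Gamma(TB)$. Second, inserting $P$ into the warped-product formulas for the Levi-Civita connection $\nabla$, namely $\nabla_{X}V=\nabla_{V}X=\frac{Xf}{f}V$ and $\nabla_{V}W=\nabla^{F}_{V}W-\frac{g(V,W)}{f}grad_{B}f$, yields the two covariant derivatives of $P$ that occur inside (4):
$$\nabla_{X}P=\frac{Xf}{f}P,\qquad \nabla_{V}P=\nabla^{F}_{V}P-\frac{\pi(V)}{f}grad_{B}f.$$

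Next I would substitute the classical Levi-Civita warped-product curvature identities --- $R(X,Y)Z=R^{B}(X,Y)Z$, $R(V,X)Y=-\frac{H^{f}_{B}(X,Y)}{f}V$, $R(X,Y)V=R(V,W)X=0$, $R(X,V)W=-\frac{g(V,W)}{f}\nabla^{B}_{X}(grad_{B}f)$ and $R(U,V)W=R^{F}(U,V)W-\frac{|grad_{B}f|^{2}_{B}}{f^{2}}[g(V,W)U-g(U,W)V]$ --- into (4) case by case. The reductions are then driven entirely by the two facts above. For instance, in case (2) the piece $g(Y,\nabla_{X}P)=\frac{Xf}{f}\pi(Y)$ vanishes because $Y\in\Gamma(TB)$, whereas $g(Y,\nabla_{V}P)=-\frac{\pi(V)}{f}g(Y,grad_{B}f)=-\frac{\pi(V)}{f}Yf$ survives and produces exactly the term $-\pi(V)\frac{Yf}{f}X$; in cases (3) and (4) only the warping/gradient parts of $\nabla_{X}P$ and $\nabla_{V}P$ contribute, and the correction $\pi(Z)[\pi(Y)X-\pi(X)Y]$ in (4) collapses once a base argument forces one of the $\pi$-factors to be zero. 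In the fibre-only case (6), by contrast, $R^{F}$, the $|grad_{B}f|^{2}_{B}$-term, both mixed contractions $g(W,\nabla_{U}P)$ and $g(W,\nabla_{V}P)$, and the full correction $\pi(W)[\pi(V)U-\pi(U)V]$ all persist, and one simply records them.

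I expect the main difficulty to be organizational rather than conceptual: keeping precise track of which scalar factors $\pi(\cdot)$ and $g(\cdot,\nabla_{\bullet}P)$ vanish (because an argument lies in $\Gamma(TB)$ and $g$ is block-diagonal) and which survive, and then matching the surviving pieces against the exact coefficients in the six target formulas. The only computation needing input beyond (4) and the standard $R$-identities is the evaluation of $\nabla_{V}P$, the sole place where both the fibre connection $\nabla^{F}$ and the gradient term $-\frac{g(V,P)}{f}grad_{B}f$ enter; in cases (5) and (6) I would deliberately leave $g(W,\nabla_{V}P)$ unexpanded --- exactly as it appears in the statement --- rather than resolving $\nabla^{F}_{V}P$, and this handling of $\nabla_{V}P$ is precisely what distinguishes these identities from their duals in the preceding lemma, where instead $P\in\Gamma(TB)$ forces $\pi$ to vanish on $\Gamma(TF)$.
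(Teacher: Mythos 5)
Your proposal is correct, and it is worth noting that the paper itself contains no proof to compare against: Lemma 2.5 is imported verbatim from reference [10] (``We need the following two lemmas from [10]''), where it is obtained by first computing $\overline\nabla$ on the warped product and then the curvature directly from the definition. Your route --- specializing the tensorial relation (4), which holds for arbitrary $P$, and feeding in the classical Levi-Civita warped-product curvatures together with $\pi|_{\Gamma(TB)}=0$, $\nabla_{X}P=\frac{Xf}{f}P$ and $\nabla_{V}P=\nabla^{F}_{V}P-\frac{\pi(V)}{f}\,grad_{B}f$ --- is a cleaner organization of the same computation, and it checks out in every case: I verified all six substitutions, e.g.\ in (2) the surviving term is $g(Y,\nabla_{V}P)X=-\pi(V)\frac{Yf}{f}X$ exactly as you say; in (3) and (4) only the $\frac{Xf}{f}P$ part, respectively the $-\frac{\pi(V)}{f}grad_{B}f$ part, of $\nabla P$ contributes; and in (5) the correction $\pi(W)[\pi(V)X-\pi(X)V]$ collapses to $\pi(W)\pi(V)X$. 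Your Levi-Civita input formulas are consistent with the paper's sign conventions (they are Lemma 2.4 at $P=0$), which is the one place a blind derivation could silently go wrong. Two small remarks: your decision to leave $g(W,\nabla_{V}P)$ unexpanded is not only cosmetic but harmless, since for $W\in\Gamma(TF)$ one has $g(W,\nabla_{V}P)=g(W,\nabla^{F}_{V}P)$ (the $grad_{B}f$ component is $g$-orthogonal to $W$), so the statement's ambient $\nabla$ agrees with the fibre derivative wherever it is later contracted (Lemma 2.7(4)); and in case (1) your observation that $\pi$ kills all correction terms also explains why the lemma states $R^{B}$ rather than the $\overline R^{B}$ appearing in Lemma 2.4(1) --- with $P$ tangent to $F$, the induced connection on $B$ is just the Levi-Civita one.
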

Let $\overline{Ric}$ be the Ricci curvature of $\overline{\nabla},$ then by Lemma 2.4, 2.5 and the definition of the Ricci curvature, we can get:
\begin{Lemma}
Let $M=B \times_{f}F $ be a warped product, ${\rm dim}B=n_{1}, {\rm dim}F=n_{2}, \mbox{ and }{\rm dim}M=\overline n=n_{1}+n_{2}$. If $X,Y\in\Gamma(TB)$, $V,W\in\Gamma(TF)$ and $P\in\Gamma(TB),$ then: \\
$(1)\overline {Ric}(X,Y)=\overline {Ric}^B(X,Y)+n_{2}\Big[\frac{H^f_{B}(X,Y)}{f}+g(Y,\nabla_{X}P)-\pi(X)\pi(Y)\Big]; $\\
$(2)\overline {Ric}(X,V)=\overline {Ric}(V,X)=0; $   \\
$(3)\overline {Ric}(V,W)=\overline {Ric}^F(V,W)+\Big\{-\frac{\Delta_{B}f}{f}+(n_{2}-1)\frac{|grad_{B}f|^2_{B}}{f^2}
+(\overline n-1)\frac{Pf}{f}\Big\}g(V,W). $\\
\end{Lemma}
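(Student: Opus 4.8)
The plan is to obtain all three formulas by computing $\overline{Ric}$ directly from its defining trace and substituting the explicit curvature identities of Lemma 2.4 (the case $P\in\Gamma(TB)$). First I would fix a frame adapted to the warped product: an orthonormal frame $\{X_1,\dots,X_{n_1}\}$ of $(B,g_{B})$, which is automatically $g$-orthonormal since $g|_{TB}=g_{B}$, together with fiber fields $V_1,\dots,V_{n_2}$ obtained by rescaling a $g_{F}$-orthonormal frame $\{\tilde V_1,\dots,\tilde V_{n_2}\}$ of $(F,g_{F})$ by $V_a=\tilde V_a/f$, so that $g(V_a,V_b)=f^2g_{F}(\tilde V_a,\tilde V_b)=\varepsilon_a\delta_{ab}$. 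Then every $\overline{Ric}(\,\cdot\,,\cdot\,)=\sum_i\varepsilon_i\langle\overline R(\,\cdot\,,X_i)\,\cdot\,,X_i\rangle+\sum_a\varepsilon_a\langle\overline R(\,\cdot\,,V_a)\,\cdot\,,V_a\rangle$ splits into a base sum and a fiber sum, and the antisymmetry $\overline R(X,Y)=-\overline R(Y,X)$ (which holds by the very definition of $\overline R$) lets me reorder arguments so that each term falls under one of the five identities of Lemma 2.4.

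For (1), with $X,Y\in\Gamma(TB)$, the base sum is literally $\overline{Ric}^B(X,Y)$ by Lemma 2.4(1). In the fiber sum I use $\overline R(X,V_a)Y=-\overline R(V_a,X)Y$ together with Lemma 2.4(2) to get $\overline R(X,V_a)Y=\big[\tfrac{H^f_{B}(X,Y)}{f}+g(Y,\nabla_{X}P)-\pi(X)\pi(Y)\big]V_a$; pairing with $V_a$ yields a factor $\varepsilon_a$, and $\sum_a\varepsilon_a^2=n_2$ produces the stated coefficient. For (2) both mixed components vanish term by term: in $\overline{Ric}(X,V)$ the base sum dies by Lemma 2.4(3) ($\overline R(X,Y)V=0$) and the fiber sum dies because Lemma 2.4(4) shows $\overline R(X,V_a)V$ lies in $TB$, hence is $g$-orthogonal to $V_a$; the computation of $\overline{Ric}(V,X)$ is symmetric, using Lemma 2.4(2) (output in $TF$, orthogonal to $X_i$) and $\overline R(V,W)X=0$ from Lemma 2.4(3).

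For (3), with $V,W\in\Gamma(TF)$, the base sum uses $\overline R(V,X_i)W=-\overline R(X_i,V)W$ and Lemma 2.4(4); pairing with $X_i$ and summing converts $\sum_i\varepsilon_i\langle\nabla^B_{X_i}grad_{B}f,X_i\rangle=Tr(H^f_{B})=-\Delta_{B}f$ (using the paper's sign convention $\Delta_{B}f=-Tr(H^f_{B})$ and $H^f_{B}(X,Y)=\langle\nabla^B_{X}grad_{B}f,Y\rangle$) and contributes $g(V,W)\big[-\tfrac{\Delta_{B}f}{f}+n_1\tfrac{Pf}{f}\big]$. The fiber sum uses Lemma 2.4(5): its $R^F$ part must reproduce $\overline{Ric}^F(V,W)$, while its algebraic part $-\Phi[g(V_a,W)V-g(V,W)V_a]$, with $\Phi=\tfrac{|grad_{B}f|^2_{B}}{f^2}+\tfrac{Pf}{f}$, collapses via $\sum_a\varepsilon_a\,g(V,V_a)g(W,V_a)=g(V,W)$ and $\sum_a\varepsilon_a^2=n_2$ to $(n_2-1)\Phi\,g(V,W)$. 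Adding the two sums and combining the warping terms $n_1\tfrac{Pf}{f}+(n_2-1)\tfrac{Pf}{f}=(\overline n-1)\tfrac{Pf}{f}$ gives the claimed formula.

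I expect the one genuinely delicate step to be the $R^F$ term in (3): because the fiber frame is rescaled by $1/f$ while $g=f^2g_{F}$ on $TF$, I must check that the two factors of $f$ cancel exactly, i.e. $\sum_a\varepsilon_a\,g\big(R^F(V,V_a)W,V_a\big)=\sum_a\varepsilon_a\,g_{F}\big(R^F(V,\tilde V_a)W,\tilde V_a\big)=\overline{Ric}^F(V,W)$, where the final equality uses that $\pi|_{TF}=0$ (since $P\in\Gamma(TB)$ and $\pi(V)=g(V,P)$ with $TB\perp TF$), so that on the fiber the semi-symmetric non-metric connection reduces to the Levi-Civita connection of $(F,g_{F})$ and $R^F$ indeed computes $\overline{Ric}^F$. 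The rest is bookkeeping: tracking which of the five identities applies after each reordering of arguments, and keeping the $\varepsilon$-signs correct throughout.
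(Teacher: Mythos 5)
Your proposal is correct and follows essentially the same route as the paper, which obtains Lemma 2.6 precisely by tracing the curvature identities of Lemma 2.4 against an adapted orthonormal frame (the paper states this without writing out the computation, which you have carried out in full, with the signs, the $\varepsilon$-factors, and the $f$-rescaling of the fiber frame all handled correctly). Your remark that $\pi|_{TF}=0$ for $P\in\Gamma(TB)$, so that $R^F$ indeed yields $\overline{Ric}^F(V,W)$ in part (3), correctly resolves the one notational subtlety between Lemma 2.4(5) and Lemma 2.6(3).
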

\begin{Lemma}
Let $M=B \times_{f}F $ be a warped product, ${\rm dim}B=n_{1}, {\rm dim}F=n_{2}, \mbox{ and } {\rm dim}M=\overline n=n_{1}+n_{2}$. If $X,Y\in\Gamma(TB)$, $V,W\in\Gamma(TF)$ and $P\in\Gamma(TF),$ then: \\
$(1)\overline {Ric}(X,Y)={Ric}^B(X,Y)+n_{2}\frac{H^f_{B}(X,Y)}{f}; $ \\
$(2)\overline {Ric}(X,V)=(\overline n-1)\pi(V)\frac{Xf}{f}; $   \\
$(3)\overline {Ric}(V,X)=(1-\overline n)\pi(V)\frac{Xf}{f}; $   \\
$(4)\overline {Ric}(V,W)=Ric^F(V,W)+g(V,W)\Big[-\frac{\Delta_{B}f}{f}+(n_{2}-1)\frac{|grad_{B}f|^2_{B}}{f^2}\Big]
+(\overline n-1)g(W,\nabla_{V}P)\\ \mbox{} \quad\quad\quad\quad\quad\quad\;\;+(1-\overline n)\pi(V)\pi(W). $
\end{Lemma}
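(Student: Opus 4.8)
\section*{Proof proposal for Lemma 2.7}

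The plan is to obtain each of the four components of $\overline{Ric}$ directly from the defining contraction $\overline{Ric}(X,Y)=\sum_{k}\varepsilon_{k}\langle \overline R(X,E_{k})Y,E_{k}\rangle$, feeding in the curvature components supplied by Lemma 2.5. First I would fix an adapted orthonormal frame of $M$: take an orthonormal frame $\{E_{1},\dots,E_{n_{1}}\}$ of $(B,g_{B})$ lifted to $M$, and an orthonormal frame $\{\tilde E_{1},\dots,\tilde E_{n_{2}}\}$ of $(F,g_{F})$, and set $W_{j}=\frac{1}{f}\tilde E_{j}$. Since $g=g_{B}\oplus f^{2}g_{F}$, the collection $\{E_{i},W_{j}\}$ is $g$-orthonormal, so the contraction index $k$ splits into a base sum ($i=1,\dots,n_{1}$) and a fibre sum ($j=1,\dots,n_{2}$), and each Ricci component becomes the sum of these two partial traces.

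For each of $(1)$–$(4)$ I would substitute the appropriate line of Lemma 2.5, using the antisymmetry $\overline R(X,Y)=-\overline R(Y,X)$ to reorder the arguments into the stated form where necessary (for instance $\overline R(X,W_{j})Y=-\overline R(W_{j},X)Y$ in the fibre sum of $(1)$, and $\overline R(V,E_{i})W=-\overline R(E_{i},V)W$ in the base sum of $(4)$). The routine simplifications then rely on three facts: the orthogonality of base and fibre vectors, which annihilates every cross term $\langle X,W_{j}\rangle$ and $\langle V,E_{i}\rangle$; the frame-completeness identities $\sum_{i}\varepsilon_{i}\langle X,E_{i}\rangle E_{i}=X$ and $\sum_{j}\varepsilon_{j}g(W,W_{j})W_{j}=W$, which collapse the surviving sums; and the trace identity $\sum_{i}\varepsilon_{i}\langle\nabla_{E_{i}}^{B}(grad_{B}f),E_{i}\rangle=Tr(H^{f}_{B})=-\Delta_{B}f$, which produces the Laplacian term of $(4)$. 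In the purely fibre contraction one also uses $R^{F}(V,W_{j})W=\frac{1}{f}R^{F}(V,\tilde E_{j})W$ so that the factor of $f$ from $W_{j}=\frac1f\tilde E_{j}$ and the factor $f^{2}$ from $g=f^{2}g_{F}$ cancel against one another, leaving exactly $Ric^{F}(V,W)$ with no warping weight.

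The step I expect to be the main obstacle is the bookkeeping in the fibre–fibre contractions, precisely because $P\in\Gamma(TF)$ now lies inside the summation range. Whereas in the analogous computation for $P\in\Gamma(TB)$ (Lemma 2.6) the generator sits in the base and the relevant cross terms vanish outright, here terms such as $\sum_{j}\varepsilon_{j}\pi(W_{j})\langle V,W_{j}\rangle$ and $\sum_{j}\varepsilon_{j}g(W,\nabla_{W_{j}}P)\langle V,W_{j}\rangle$ do not vanish; they must be resummed using $\sum_{j}\varepsilon_{j}\pi(W_{j})W_{j}=P$ (valid because $P\in\Gamma(TF)$) together with the $C^{\infty}$-linearity of $X\mapsto\nabla_{X}P$, which returns $\pi(V)$ and $g(W,\nabla_{V}P)$ respectively. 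Carefully combining the coefficient $n_{1}$ from the base sum with $n_{2}$, $n_{2}-1$, and $1-n_{2}$ from the fibre sum is exactly what assembles the final factors $\overline n-1$ and $1-\overline n$, and it is also what makes $\overline{Ric}(X,V)$ and $\overline{Ric}(V,X)$ differ in sign, reflecting the non-symmetry of the Ricci tensor of the semi-symmetric non-metric connection. Once these identities are in place, adding the two partial traces and grouping like terms yields $(1)$–$(4)$ exactly.
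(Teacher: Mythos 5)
Your proposal is correct and matches the paper's route exactly: the paper gives no written proof for this lemma, asserting only that it follows ``by Lemma 2.4, 2.5 and the definition of the Ricci curvature,'' which is precisely the frame contraction you carry out. Your execution plan is sound in all the delicate spots — the antisymmetric reordering to apply Lemma 2.5(2) and 2.5(5), the cancellation of the $f$-weights leaving $Ric^{F}(V,W)$ unweighted, the sign convention $Tr(H^{f}_{B})=-\Delta_{B}f$, and the resummations $\sum_{j}\varepsilon_{j}\pi(W_{j})\langle V,W_{j}\rangle=\pi(V)$ and $\sum_{j}\varepsilon_{j}\langle V,W_{j}\rangle\nabla_{W_{j}}P=\nabla_{V}P$ that produce the coefficients $\overline n-1$ and $1-\overline n$ and the asymmetry between $(2)$ and $(3)$.
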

Let $\overline{S}$ be the scalar curvature of $\overline{\nabla},$ then by lemma 2.6, 2.7 and the definition of the scalar curvature, we have the following:
\begin{Lemma}
Let $M=B \times_{f}F $ be a warped product, ${\rm dim}B=n_{1}, {\rm dim}F=n_{2}, \mbox{ and }{\rm dim}M=\overline n=n_{1}+n_{2}$. If $P\in\Gamma(TB),$ then the scalar curvature $\overline S$ has the following expression:
$$\overline S=\overline S^B-2n_{2}\frac{\Delta_{B}f}{f}+\frac{S^F}{f^2}+n_{2}(n_{2}-1)\frac{|grad_{B}f|^2_{B}}{f^2}
+n_{2}(\overline n-1)\frac{Pf}{f}-n_{2}\pi(P)+n_{2}div_{B}P. \eqno{(5)}$$
\end{Lemma}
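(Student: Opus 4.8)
The plan is to obtain $\overline S$ directly from the definition $\overline S=\sum_{k}\varepsilon_{k}\overline{Ric}(E_{k},E_{k})$ by tracing the Ricci formulas of Lemma 2.6 over a frame adapted to the warped product. First I would fix an orthonormal frame $\{X_{1},\dots,X_{n_{1}}\}$ of $(B,g_{B})$ with $g_{B}(X_{i},X_{i})=\varepsilon_{i}$, together with a $g_{F}$-orthonormal frame $\{\tilde V_{1},\dots,\tilde V_{n_{2}}\}$ of $F$ with $g_{F}(\tilde V_{a},\tilde V_{a})=\varepsilon_{a}$. Since the fibre carries the rescaled metric $f^{2}g_{F}$, the vectors $V_{a}=f^{-1}\tilde V_{a}$ satisfy $g(V_{a},V_{a})=\varepsilon_{a}$, so $\{X_{i}\}\cup\{V_{a}\}$ is a $g$-orthonormal frame of $M$. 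Because the scalar curvature sums only the diagonal entries, the cross terms $\overline{Ric}(X_{i},V_{a})$ never appear (they vanish by Lemma 2.6(2) anyway), and the computation splits into a base sum and a fibre sum, $\overline S=\sum_{i}\varepsilon_{i}\overline{Ric}(X_{i},X_{i})+\sum_{a}\varepsilon_{a}\overline{Ric}(V_{a},V_{a})$.

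For the base sum I would insert Lemma 2.6(1) and evaluate the three resulting traces separately. The Hessian trace gives $\sum_{i}\varepsilon_{i}H^{f}_{B}(X_{i},X_{i})=Tr(H^{f})=-\Delta_{B}f$; the derivative-of-$P$ term gives $\sum_{i}\varepsilon_{i}g(X_{i},\nabla_{X_{i}}P)=div_{B}P$ straight from the definition of the divergence; and the last term gives $\sum_{i}\varepsilon_{i}\pi(X_{i})^{2}=\pi(P)$, where one expands $P=\sum_{i}\varepsilon_{i}g(P,X_{i})X_{i}$ in the base frame (legitimate precisely because $P\in\Gamma(TB)$) so that $\sum_{i}\varepsilon_{i}g(P,X_{i})^{2}=g(P,P)=\pi(P)$. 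This produces $\overline S^{B}+n_{2}\big[-\Delta_{B}f/f+div_{B}P-\pi(P)\big]$.

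For the fibre sum I would insert Lemma 2.6(3). The bracketed scalar factor is independent of the fibre index, so it pulls out against $\sum_{a}\varepsilon_{a}g(V_{a},V_{a})=\sum_{a}\varepsilon_{a}^{2}=n_{2}$, while the intrinsic piece becomes $\sum_{a}\varepsilon_{a}\overline{Ric}^{F}(V_{a},V_{a})=f^{-2}\sum_{a}\varepsilon_{a}\,Ric^{F}(\tilde V_{a},\tilde V_{a})=S^{F}/f^{2}$. Here I use that $\overline{Ric}^{F}$ is a $(0,2)$-tensor (so the two factors $f^{-1}$ come out as $f^{-2}$) and that, with $P$ tangent to $B$, the fibre curvature is the Levi-Civita curvature $R^{F}$ by Lemma 2.4(5), so $\overline{Ric}^{F}=Ric^{F}$ and its $g_{F}$-trace is $S^{F}$. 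Adding the base and fibre sums and combining the two identical $-n_{2}\Delta_{B}f/f$ contributions into $-2n_{2}\Delta_{B}f/f$ yields exactly the asserted formula (5).

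I expect the only genuinely delicate point to be the fibre rescaling: one must track the factors $f^{-1}$ consistently when passing between the $g$-orthonormal frame $\{V_{a}\}$ and the $g_{F}$-orthonormal frame $\{\tilde V_{a}\}$, since these govern both the $1/f^{2}$ weight on $S^{F}$ and the normalization $\sum_{a}\varepsilon_{a}g(V_{a},V_{a})=n_{2}$. Everything else is a routine substitution of the definitions of $\Delta_{B}f$, $div_{B}P$ and $\pi$, so no further structural difficulty should arise.
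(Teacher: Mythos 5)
Your proposal is correct and is essentially the paper's own argument: the paper proves Lemma 2.8 only by the remark that it follows ``by lemma 2.6, 2.7 and the definition of the scalar curvature,'' and your computation --- splitting the trace over an adapted frame, handling the base sum via Lemma 2.6(1) with $Tr(H^{f}_{B})=-\Delta_{B}f$, $\sum_{i}\varepsilon_{i}g(X_{i},\nabla_{X_{i}}P)=div_{B}P$ and $\sum_{i}\varepsilon_{i}\pi(X_{i})^{2}=\pi(P)$, and the fibre sum via Lemma 2.6(3) with the rescaled frame $V_{a}=f^{-1}\tilde V_{a}$ --- supplies exactly the trace the paper leaves implicit, and it reproduces formula (5) term by term. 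Your care with the $f^{-2}$ rescaling and the identification of the fibre Ricci term with $Ric^{F}$ (consistent with Lemma 2.4(5), since $\pi$ vanishes on fibre vectors when $P\in\Gamma(TB)$) is precisely what makes the $S^{F}/f^{2}$ and $n_{2}\{\cdots\}$ contributions come out correctly.
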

\begin{Lemma}
Let $M=B \times_{f}F $ be a warped product, ${\rm dim}B=n_{1}, {\rm dim}F=n_{2}, \mbox{ and }{\rm dim}M=\overline n=n_{1}+n_{2}$. If $P\in\Gamma(TF),$ then the scalar curvature $\overline S$ has the following expression:
$$\overline S=S^B-2n_{2}\frac{\Delta_{B}f}{f}+\frac{S^F}{f^2}+n_{2}(n_{2}-1)\frac{|grad_{B}f|^2_{B}}{f^2}
+(1-\overline n)\pi(P)+(\overline n-1)div_{F}P. \eqno{(6)}$$
\end{Lemma}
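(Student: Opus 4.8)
The plan is to compute $\overline{S}=\sum_k \varepsilon_k\overline{Ric}(E_k,E_k)$ directly from the definition, feeding in the Ricci formulas of Lemma 2.7 evaluated on a frame adapted to the warped-product structure. First I would fix a local $g_B$-orthonormal frame $\{X_i\}_{i=1}^{n_1}$ on $B$ with $g(X_i,X_i)=\varepsilon_i$ and a local $g_F$-orthonormal frame $\{V_j\}_{j=1}^{n_2}$ on $F$ with $g_F(V_j,V_j)=\varepsilon_j$. Since $g=g_B\oplus f^2g_F$, the rescaled vectors $\widetilde V_j=\tfrac1f V_j$ satisfy $g(\widetilde V_j,\widetilde V_j)=\varepsilon_j$, so $\{X_i\}\cup\{\widetilde V_j\}$ is a $g$-orthonormal frame on $M$. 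Because the mixed components $\overline{Ric}(X_i,\widetilde V_j)$ never appear on the diagonal, the trace splits as $\overline S=\sum_i\varepsilon_i\overline{Ric}(X_i,X_i)+\sum_j\varepsilon_j\overline{Ric}(\widetilde V_j,\widetilde V_j)$, and I would handle the two sums separately.

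For the base sum I would insert Lemma 2.7(1), $\overline{Ric}(X_i,X_i)={Ric}^B(X_i,X_i)+n_2\tfrac{H^f_{B}(X_i,X_i)}{f}$. Summing against $\varepsilon_i$ gives $S^B$ from the first term, while $\sum_i\varepsilon_iH^f_{B}(X_i,X_i)=Tr(H^f_{B})=-\Delta_B f$ by the definition of the Laplacian, so the base sum equals $S^B-n_2\tfrac{\Delta_B f}{f}$. For the fiber sum I would use tensoriality of the Ricci tensor, $\overline{Ric}(\widetilde V_j,\widetilde V_j)=\tfrac1{f^2}\overline{Ric}(V_j,V_j)$, together with Lemma 2.7(4). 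Term by term: $\sum_j\varepsilon_j{Ric}^F(V_j,V_j)=S^F$ contributes $\tfrac{S^F}{f^2}$; the bracketed scalar multiplies $\sum_j\varepsilon_j g(V_j,V_j)=n_2f^2$, giving $-n_2\tfrac{\Delta_B f}{f}+n_2(n_2-1)\tfrac{|grad_{B}f|^2_{B}}{f^2}$ after the $\tfrac1{f^2}$ rescaling.

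The two $P$-dependent fiber terms are where the actual work lies. For the contribution of $(1-\overline n)\pi(V)\pi(W)$, I would expand $P$ in the $g$-orthonormal fiber frame $\{\widetilde V_j\}$ to obtain $\sum_j\varepsilon_j\pi(V_j)^2=f^2\,g(P,P)=f^2\pi(P)$, so after dividing by $f^2$ this produces exactly $(1-\overline n)\pi(P)$. For the contribution of $(\overline n-1)g(W,\nabla_V P)$, I would invoke the standard warped-product (O'Neill) connection identity $\nabla_{V_j}P=\nabla^F_{V_j}P-\tfrac{g(V_j,P)}{f}\,grad_{B}f$; since $grad_{B}f\in\Gamma(TB)$ is $g$-orthogonal to the fiber, the correction term drops out of $g(V_j,\nabla_{V_j}P)$, leaving $\sum_j\varepsilon_j g(V_j,\nabla_{V_j}P)=f^2\sum_j\varepsilon_j g_F(V_j,\nabla^F_{V_j}P)=f^2\,div_{F}P$, and the $\tfrac1{f^2}$ rescaling yields $(\overline n-1)\,div_{F}P$.

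Finally I would add the base and fiber contributions: the two copies of $-n_2\tfrac{\Delta_B f}{f}$ combine into $-2n_2\tfrac{\Delta_B f}{f}$, and collecting the remaining terms reproduces formula (6). The main obstacle I anticipate is bookkeeping rather than conceptual: one must keep the factors of $f^2$ arising from the metric $f^2g_F$, from the normalization $\widetilde V_j=\tfrac1f V_j$, and the signs $\varepsilon_j$ mutually consistent, and correctly reduce the two $P$-terms to the intrinsic fiber quantities $\pi(P)$ and $div_{F}P$ — which is precisely the step where the orthogonality of $grad_{B}f$ to $TF$ is indispensable.
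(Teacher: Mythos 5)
Your proposal is correct and coincides with the paper's (only sketched) argument: the paper derives Lemma 2.9 precisely by tracing the Ricci formulas of Lemma 2.7 in a $g$-orthonormal frame adapted to $g=g_{B}\oplus f^{2}g_{F}$, which is what you carry out, and your bookkeeping — $Tr(H^{f}_{B})=-\Delta_{B}f$, the $\tfrac{1}{f^{2}}$ rescaling of the fiber frame, $\sum_{j}\varepsilon_{j}\pi(V_{j})^{2}=f^{2}\pi(P)$, and the reduction $\sum_{j}\varepsilon_{j}g(V_{j},\nabla_{V_{j}}P)=f^{2}\,div_{F}P$ via $\nabla_{V}P=\nabla^{F}_{V}P-\tfrac{g(V,P)}{f}grad_{B}f$ — is accurate and reproduces formula (6). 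You merely make explicit the details the paper leaves implicit, so there is nothing to correct.
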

\section{\large Quasi-Einstein warped products with a semi-symmetric non-metric connection}
\subsection{\large Ricci curvature and scalar curvature}
In this section, we compute Ricci curvature and scalar curvature of quasi-Einstein warped products with a semi-symmetric non-metric connection.

\newtheorem{Proposition}{Proposition}[section]
\begin{Proposition}
Let $M=B \times_{f}F$ be a quasi-Einstein warped product, ${\rm dim}B=n_{1},\;{\rm dim}F=n_{2},\;{\rm dim}M=\overline n=n_{1}+n_{2},\;X,Y\in\Gamma(TB),\;V,W\in\Gamma(TF),$ and $P\in\Gamma(TB).$\\
$1)$ If $U\in\Gamma(TB),$ then
\begin{equation}
\begin{cases}\setcounter{equation}{7}
\overline{Ric}^{B}(X,Y)=ag_{B}(X,Y)-n_{2}\Big[\frac{H^f_{B}(X,Y)}{f}+g(Y,\nabla_{X}P)-\pi(X)\pi(Y)\Big]
+bg_{B}(X,U)g_{B}(Y,U),\\
\overline{Ric}^{F}(V,W)=g_{F}(V,W)\big[f\Delta_{B}f+(1-n_{2})|grad_{B}f|_{B}^{2}+(1-\overline{n})fPf+af^{2}\big].
\end{cases}
\end{equation}
$2)$ If $U\in\Gamma(TF),$ then
\begin{equation}
\begin{cases}\setcounter{equation}{8}
\overline{Ric}^{B}(X,Y)=ag_{B}(X,Y)-n_{2}\Big[\frac{H^f_{B}(X,Y)}{f}+g(Y,\nabla_{X}P)-\pi(X)\pi(Y)\Big],\\
\overline{Ric}^{F}(V,W)=g_{F}(V,W)\big[f\Delta_{B}f+(1-n_{2})|grad_{B}f|_{B}^{2}+(1-\overline{n})fPf+af^{2}\big]\\
\mbox{}\quad\quad\quad\quad\quad\quad+bf^{4}g_{F}(V,U)g_{F}(W,U).
\end{cases}
\end{equation}
\end{Proposition}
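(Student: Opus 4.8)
The plan is to feed the quasi-Einstein condition of Definition 2.2 into the curvature identities of Lemma 2.6 and solve algebraically for $\overline{Ric}^{B}$ and $\overline{Ric}^{F}$. Since $P\in\Gamma(TB)$, Lemma 2.6 applies verbatim, and the whole argument rests on two simple facts: the base and fiber distributions are $g$-orthogonal, and the warped metric satisfies $g(V,W)=f^{2}g_{F}(V,W)$ on fiber vectors.

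First I would evaluate $\overline{Ric}(X,Y)=ag(X,Y)+bA(X)A(Y)$ on base vectors $X,Y\in\Gamma(TB)$. In part 1) the generator $U$ lies in $B$, so $A(X)=g(X,U)=g_{B}(X,U)$ and the quadratic term survives as $b\,g_{B}(X,U)g_{B}(Y,U)$; equating with Lemma 2.6(1) and isolating $\overline{Ric}^{B}(X,Y)$ produces the first line of (7). In part 2) the generator lies in $F$, so orthogonality forces $A(X)=g(X,U)=0$, the $b$-term disappears, and the identical manipulation yields the first line of (8).

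Next I would evaluate the same condition on fiber vectors $V,W\in\Gamma(TF)$ using Lemma 2.6(3). After substituting $\overline{Ric}(V,W)=ag(V,W)+bA(V)A(W)$, transposing the curvature brace, and using $g(V,W)=f^{2}g_{F}(V,W)$, the factor $f^{2}$ clears the denominators so that $-\tfrac{\Delta_{B}f}{f}$, $(n_{2}-1)\tfrac{|grad_{B}f|^{2}_{B}}{f^{2}}$ and $(\overline n-1)\tfrac{Pf}{f}$ turn, after the sign change, into $f\Delta_{B}f$, $(1-n_{2})|grad_{B}f|^{2}_{B}$ and $(1-\overline n)fPf$, giving the common scalar factor $f\Delta_{B}f+(1-n_{2})|grad_{B}f|^{2}_{B}+(1-\overline n)fPf+af^{2}$. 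In part 1), $U\in\Gamma(TB)$ gives $A(V)=g(V,U)=0$, so no extra term appears and I recover the second line of (7). In part 2), $U\in\Gamma(TF)$ gives $A(V)=g(V,U)=f^{2}g_{F}(V,U)$, whence $bA(V)A(W)=bf^{4}g_{F}(V,U)g_{F}(W,U)$, which is exactly the additional summand in the second line of (8).

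The argument is otherwise routine; the step demanding the most care is the scaling bookkeeping in the fiber equation — tracking the single power of $f^{2}$ coming from $g=g_{B}\oplus f^{2}g_{F}$ when clearing denominators, versus the double power $f^{4}$ that arises from the product $A(V)A(W)$ when $U\in\Gamma(TF)$. The only conceptual ingredient is the orthogonality of the two factors, which decides on which of $B$ and $F$ the 1-form $A$, and hence the $b$-term, vanishes.
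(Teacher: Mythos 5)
Your proposal is correct and follows essentially the same route as the paper: substitute the quasi-Einstein condition $\overline{Ric}=ag+bA\otimes A$ into Lemma 2.6(1) and 2.6(3), use the orthogonality of the factors to decide when the $b$-term vanishes, and clear the $f^{2}$ scaling from $g=g_{B}\oplus f^{2}g_{F}$ to obtain the fiber equations, including the $f^{4}$ factor when $U\in\Gamma(TF)$. The paper's proof is just a terser version of exactly these steps, so nothing further is needed.
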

\begin{proof}
$1)$ Since $M$ is quasi-Einstein, we have $\overline{Ric}(X,Y)=ag_{B}(X,Y)+bg_{B}(X,U)g_{B}(Y,U),$ then use Lemma 2.6(1) we get the first equation of $(7).$\\
Since $M$ is quasi-Einstein, we have $\overline{Ric}(V,W)=af^{2}g_{F}(V,W)+bg(V,U)g(W,U)=af^{2}g_{F}(V,W),$ then use Lemma 2.6(3) we get the second equation of $(7).$\\
$2)$ Since $M$ is quasi-Einstein, we have $\overline{Ric}(X,Y)=ag_{B}(X,Y)+bg(X,U)g(Y,U)=ag_{B}(X,Y),$ then use Lemma 2.6(1) we get the first equation of $(8).$\\
Since $M$ is quasi-Einstein, we have $\overline{Ric}(V,W)=af^{2}g_{F}(V,W)+bf^{4}g_{F}(V,U)g_{F}(W,U),$ then use Lemma 2.6(3) we get the second equation of $(8).$
\end{proof}
Using the same method, we can get the following Proposition:
\begin{Proposition}
Let $M=B \times_{f}F$ be a quasi-Einstein warped product, ${\rm dim}B=n_{1},\;{\rm dim}F=n_{2},\;{\rm dim}M=\overline n=n_{1}+n_{2},\;X,Y\in\Gamma(TB),\;V,W\in\Gamma(TF),$ and $P\in\Gamma(TF).$\\
$1)$ If $U\in\Gamma(TB),$ then
\begin{equation}
\begin{cases}\setcounter{equation}{9}
Ric^{B}(X,Y)=ag_{B}(X,Y)-n_{2}\frac{H^f_{B}(X,Y)}{f}+bg_{B}(X,U)g_{B}(Y,U),  \\
Ric^{F}(V,W)=g_{F}(V,W)\big[f\Delta_{B}f+(1-n_{2})|grad_{B}f|_{B}^{2}+af^{2}\big]
+(1-\overline n)g(W,\nabla_{V}P)\\ \mbox{}\quad\quad\quad\quad\quad\quad+(\overline n-1)\pi(V)\pi(W).
\end{cases}
\end{equation}
$2)$ If $U\in\Gamma(TF),$ then
\begin{equation}
\begin{cases}\setcounter{equation}{10}
Ric^{B}(X,Y)=ag_{B}(X,Y)-n_{2}\frac{H^f_{B}(X,Y)}{f},   \\
Ric^{F}(V,W)=g_{F}(V,W)\big[f\Delta_{B}f+(1-n_{2})|grad_{B}f|_{B}^{2}+af^{2}\big]+(1-\overline n)g(W,\nabla_{V}P)\\ \mbox{}\quad\quad\quad\quad\quad\quad+(\overline n-1)\pi(V)\pi(W)+bf^{4}g_{F}(V,U)g_{F}(W,U).
\end{cases}
\end{equation}
\end{Proposition}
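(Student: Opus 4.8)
The plan is to run the computation exactly as in Proposition 3.1, but now invoking Lemma 2.7 in place of Lemma 2.6, since here $P\in\Gamma(TF)$. The starting point is the quasi-Einstein condition $\overline{Ric}(X,Y)=ag(X,Y)+bA(X)A(Y)$ with $A(X)=g(X,U)$, applied separately to a pair of base vectors $X,Y\in\Gamma(TB)$ and to a pair of fibre vectors $V,W\in\Gamma(TF)$. In each case I first simplify $g$ and $A$ using the warped-product decomposition $g=g_{B}\oplus f^{2}g_{F}$ together with the orthogonality $g(\Gamma(TB),\Gamma(TF))=0$, and then solve the relevant identity of Lemma 2.7 for the intrinsic Ricci tensors $Ric^{B}$ and $Ric^{F}$.

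For the base block I evaluate $\overline{Ric}(X,Y)$ on $X,Y\in\Gamma(TB)$. Since $g(X,Y)=g_{B}(X,Y)$, the quadratic term $bA(X)A(Y)=bg(X,U)g(Y,U)$ is what distinguishes the two cases: when $U\in\Gamma(TB)$ it equals $bg_{B}(X,U)g_{B}(Y,U)$, while when $U\in\Gamma(TF)$ orthogonality forces $g(X,U)=0$ and the term drops out. Substituting into Lemma 2.7(1), namely $\overline{Ric}(X,Y)=Ric^{B}(X,Y)+n_{2}\frac{H^{f}_{B}(X,Y)}{f}$, and rearranging gives the first equations of $(9)$ and $(10)$ respectively.

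For the fibre block the key observation is that $g(V,W)=f^{2}g_{F}(V,W)$ and, when $U\in\Gamma(TF)$, that $A(V)=g(V,U)=f^{2}g_{F}(V,U)$, so the quadratic term becomes $bf^{4}g_{F}(V,U)g_{F}(W,U)$ (whereas it vanishes when $U\in\Gamma(TB)$). I then expand Lemma 2.7(4), which expresses $\overline{Ric}(V,W)$ in terms of $Ric^{F}(V,W)$, the curvature factor $g(V,W)\big[-\frac{\Delta_{B}f}{f}+(n_{2}-1)\frac{|grad_{B}f|^{2}_{B}}{f^{2}}\big]$, and the connection terms $(\overline{n}-1)g(W,\nabla_{V}P)+(1-\overline{n})\pi(V)\pi(W)$. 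Multiplying the bracketed curvature factor through by the $f^{2}$ coming from $g(V,W)$ turns it into $g_{F}(V,W)\big[-f\Delta_{B}f+(n_{2}-1)|grad_{B}f|^{2}_{B}\big]$, and after moving everything except $Ric^{F}(V,W)$ to the other side I obtain the second equations of $(9)$ and $(10)$.

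The computation is elementary, so the only real care required is bookkeeping: tracking the powers of $f$ that enter through $g=g_{B}\oplus f^{2}g_{F}$ (an $f^{2}$ on the metric block and an $f^{4}$ on the quadratic fibre term), keeping the sign reversals straight when $Ric^{F}$ is isolated, so that $(\overline{n}-1)$ and $(1-\overline{n})$ exchange roles, and correctly killing the cross terms via the orthogonality of base and fibre. There is no genuine analytic obstacle; the only place an error could creep in is the $f^{2}$-clearing of the bracketed curvature factor, which must convert $\frac{\Delta_{B}f}{f}$ and $\frac{|grad_{B}f|^{2}_{B}}{f^{2}}$ into $f\Delta_{B}f$ and $|grad_{B}f|^{2}_{B}$.
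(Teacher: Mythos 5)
Your proposal is correct and follows exactly the route the paper intends: the paper derives this Proposition ``using the same method'' as Proposition 3.1, i.e.\ by substituting the quasi-Einstein condition $\overline{Ric}=ag+bA\otimes A$ into Lemma 2.7(1) and 2.7(4) and solving for $Ric^{B}$ and $Ric^{F}$, with the warped metric $g=g_{B}\oplus f^{2}g_{F}$ supplying the $f^{2}$ and $f^{4}$ factors and the orthogonality killing the cross terms. Your bookkeeping of the sign reversal that swaps $(\overline{n}-1)$ and $(1-\overline{n})$ when isolating $Ric^{F}$, and of the $f^{2}$-clearing of the bracketed curvature factor, matches the stated equations $(9)$ and $(10)$ exactly.
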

By Lemma 2.8, Proposition 3.1 and the definition of scalar curvature, we have:
\begin{Proposition}
Let $M=B \times_{f}F$ be a quasi-Einstein warped product, ${\rm dim}B=n_{1},\;{\rm dim}F=n_{2},\;{\rm dim}M=\overline n=n_{1}+n_{2},\;X,Y\in\Gamma(TB),\;V,W\in\Gamma(TF),$ and $P\in\Gamma(TB).$\\
$1)$ If $U\in\Gamma(TB),$ then
\begin{equation}
\begin{cases}
\overline{S}^{M}=\overline{n}a+b;    \\
\overline{S}^{B}=n_{1}a+n_{2}\frac{\Delta_{B}f}{f}-n_{2}div_{B}P+n_{2}\pi(P)+b;\\
\overline{S}^{F}=n_{2}\big[f\Delta_{B}f+(1-n_{2})|grad_{B}f|_{B}^{2}+(1-\overline{n})fPf+af^{2}\big].
\end{cases}
\end{equation}
$2)$ If $U\in\Gamma(TF),$ then
\begin{equation}
\begin{cases}
\overline{S}^{M}=\overline{n}a+b;    \\
\overline{S}^{B}=n_{1}a+n_{2}\frac{\Delta_{B}f}{f}-n_{2}div_{B}P+n_{2}\pi(P);\\
\overline{S}^{F}=n_{2}\big[f\Delta_{B}f+(1-n_{2})|grad_{B}f|_{B}^{2}+(1-\overline{n})fPf+af^{2}\big]+bf^{2}.
\end{cases}
\end{equation}
\end{Proposition}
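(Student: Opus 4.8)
The plan is to obtain all three scalar curvatures by contracting the Ricci identities of Proposition 3.1 against a suitable orthonormal frame, the only inputs being the trace conventions recorded in Section 2 and two elementary frame identities. Concretely, I would use $\sum_{i}\varepsilon_{i}H^{f}_{B}(E_{i},E_{i})=Tr(H^{f})=-\Delta_{B}f$, the definition $div_{B}P=\sum_{i}\varepsilon_{i}g(\nabla_{E_{i}}P,E_{i})$, and the polarization identities $\sum_{k}\varepsilon_{k}g(E_{k},Z)^{2}=g(Z,Z)$ for any $Z$ and, in particular, $\sum_{i}\varepsilon_{i}\pi(E_{i})^{2}=\pi(P)=|P|^{2}$.

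First I would compute $\overline{S}^{M}$. Since $M$ is quasi-Einstein, $\overline{Ric}(E_{k},E_{k})=a\varepsilon_{k}+bA(E_{k})^{2}$ for a $g$-orthonormal frame $\{E_{k}\}$ of $M$, and summing with the weights $\varepsilon_{k}$ gives $\overline{S}^{M}=a\,\overline{n}+b\,g(U,U)=\overline{n}a+b$, because $U$ is a unit vector field; this holds in both cases $U\in\Gamma(TB)$ and $U\in\Gamma(TF)$. Next, for $\overline{S}^{B}$ I would trace the first equation of Proposition 3.1 over a $g_{B}$-orthonormal frame $\{E_{i}\}_{i=1}^{n_{1}}$ of $B$: the four terms contribute $n_{1}a$, then $-\frac{n_{2}}{f}Tr(H^{f})=+n_{2}\frac{\Delta_{B}f}{f}$, then $-n_{2}div_{B}P$, and $+n_{2}\pi(P)$ from the $\pi\otimes\pi$ term. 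When $U\in\Gamma(TB)$ one adds $b\,g_{B}(U,U)=b$, whereas for $U\in\Gamma(TF)$ that last term vanishes, yielding the two stated expressions for $\overline{S}^{B}$.

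For $\overline{S}^{F}$ I would trace the second equation of Proposition 3.1 over a $g_{F}$-orthonormal frame $\{\widetilde{E}_{j}\}_{j=1}^{n_{2}}$ of the fibre. The scalar factor simply multiplies $\sum_{j}\widetilde{\varepsilon}_{j}g_{F}(\widetilde{E}_{j},\widetilde{E}_{j})=n_{2}$, producing the bracketed $n_{2}[\,\cdots]$ term in both cases. The step I expect to require the most care is the $U\in\Gamma(TF)$ correction: the extra summand $bf^{4}g_{F}(V,U)g_{F}(W,U)$ contracts to $bf^{4}g_{F}(U,U)$, and one must remember that $U$ is normalized in the \emph{ambient} warped metric, so $1=g(U,U)=f^{2}g_{F}(U,U)$, i.e. $g_{F}(U,U)=f^{-2}$. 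This produces $bf^{4}\cdot f^{-2}=bf^{2}$, precisely the tail of the $\overline{S}^{F}$ formula in case 2).

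Finally, I would record the cross-check afforded by Lemma 2.8. Since $P\in\Gamma(TB)$ forces $\pi|_{TF}=0$, the fibre curvature entering $\overline{R}$ is the Levi-Civita curvature $R^{F}$ (Lemma 2.4), so $\overline{S}^{F}=S^{F}$ and equation $(5)$ may be applied to the values just computed. Substituting the three quantities into $(5)$ and collecting the $\Delta_{B}f/f$, $div_{B}P$, $\pi(P)$, $|grad_{B}f|^{2}_{B}/f^{2}$ and $Pf/f$ contributions shows that they cancel in pairs, leaving exactly $\overline{n}a+b$. This confirms that the value of $\overline{S}^{M}$ forced by Lemma 2.8 agrees with the direct quasi-Einstein trace, so the three formulas are mutually consistent.
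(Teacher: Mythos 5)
Your proposal is correct and follows essentially the same route as the paper, which simply cites Proposition 3.1, Lemma 2.8 and the definition of scalar curvature: all three formulas are obtained by tracing the quasi-Einstein condition and the Ricci identities of Proposition 3.1 in orthonormal frames, with the sign convention $\Delta_B f=-Tr(H^f)$ and the normalization $g_F(U,U)=f^{-2}$ handled exactly as needed. The only cosmetic difference is that you obtain $\overline{S}^{B}$ by a direct trace and use Lemma 2.8 (equation $(5)$) merely as a consistency check, whereas the paper's citation suggests solving $(5)$ for $\overline{S}^{B}$; the two computations are equivalent.
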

Similarly we have:
\begin{Proposition}
Let $M=B \times_{f}F$ be a quasi-Einstein warped product, ${\rm dim}B=n_{1},\;{\rm dim}F=n_{2},\;{\rm dim}M=\overline n=n_{1}+n_{2},\;X,Y\in\Gamma(TB),\;V,W\in\Gamma(TF),$ and $P\in\Gamma(TF).$\\
$1)$ If $U\in\Gamma(TB),$ then
\begin{equation}
\begin{cases}
\overline{S}^{M}=\overline{n}a+b;    \\
S^{B}=n_{1}a+n_{2}\frac{\Delta_{B}f}{f}+b;\\
S^{F}=n_{2}\big[f\Delta_{B}f+(1-n_{2})|grad_{B}f|_{B}^{2}+af^{2}\big]+(1-\overline{n})div_{F}P
+(\overline{n}-1)\pi(P).
\end{cases}
\end{equation}
$2)$ If $U\in\Gamma(TF),$ then
\begin{equation}
\begin{cases}
\overline{S}^{M}=\overline{n}a+b;    \\
S^{B}=n_{1}a+n_{2}\frac{\Delta_{B}f}{f};\\
S^{F}=n_{2}\big[f\Delta_{B}f+(1-n_{2})|grad_{B}f|_{B}^{2}+af^{2}\big]+(1-\overline{n})div_{F}P
+(\overline{n}-1)\pi(P)+bf^{2}.
\end{cases}
\end{equation}
\end{Proposition}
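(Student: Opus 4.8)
The plan is to obtain the three scalar curvatures exactly as in Proposition 3.3, but now feeding in the data attached to the case $P\in\Gamma(TF)$: Proposition 3.2 for the component Ricci tensors $Ric^{B}$ and $Ric^{F}$, and Lemma 2.9 for the total scalar curvature $\overline S$. Concretely, $\overline S^{M}$ will come from tracing the quasi-Einstein identity itself, $S^{B}$ from tracing the $Ric^{B}$ equation of Proposition 3.2 over a $g_{B}$-orthonormal frame of the base, and $S^{F}$ from tracing the $Ric^{F}$ equation of Proposition 3.2 over a $g_{F}$-orthonormal frame of the fibre; a final substitution into Lemma 2.9 then serves as a consistency check.

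First I would handle $\overline S^{M}$. Tracing $\overline{Ric}^{M}(X,Y)=ag(X,Y)+bA(X)A(Y)$ with $A(X)=g(X,U)$ and $U$ of unit length gives $\overline S^{M}=a\sum_{k}\varepsilon_{k}g(E_{k},E_{k})+b\sum_{k}\varepsilon_{k}A(E_{k})^{2}=\overline n\,a+b\,g(U,U)=\overline n\,a+b$, which is insensitive to whether $U\in\Gamma(TB)$ or $U\in\Gamma(TF)$. Next, for $S^{B}$ I would trace the $Ric^{B}$ equation of Proposition 3.2 over the base, invoking $\Delta_{B}f=-Tr(H^{f}_{B})$ so that $-\frac{n_{2}Tr(H^{f}_{B})}{f}=\frac{n_{2}\Delta_{B}f}{f}$. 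In case $1)$ the quasi-Einstein term contributes $b\,g_{B}(U,U)=b$, since $U\in\Gamma(TB)$ is $g$-unit and $g$ restricts to $g_{B}$ on the base; in case $2)$ this term is simply absent, which is why $b$ drops out of $S^{B}$.

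The real computation is $S^{F}$, obtained by tracing the $Ric^{F}$ equation of Proposition 3.2 over a $g_{F}$-orthonormal fibre frame. The term $g_{F}(V,W)[\,\cdot\,]$ produces the overall factor $n_{2}$ on the bracket, while the two terms $(1-\overline n)g(W,\nabla_{V}P)$ and $(\overline n-1)\pi(V)\pi(W)$ contract to $(1-\overline n)div_{F}P$ and $(\overline n-1)\pi(P)$. In case $2)$ there is in addition the quasi-Einstein contribution $bf^{4}g_{F}(V,U)g_{F}(W,U)$, whose trace is $bf^{4}g_{F}(U,U)$; here the warping enters decisively, because $U$ is unit for the full metric $g=g_{B}\oplus f^{2}g_{F}$, so $f^{2}g_{F}(U,U)=1$ and hence $bf^{4}g_{F}(U,U)=bf^{2}$, exactly the extra summand in $S^{F}$. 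I expect the main difficulty to be precisely this bookkeeping of the warping factor $f^{2}$ in the fibre contractions --- keeping straight which inner product ($g_{F}$ or the warped $g$) a given trace uses, and how the unit normalization of $U$ redistributes powers of $f$ --- rather than any conceptual obstacle. As a closing check I would insert the resulting $S^{B}$ and $S^{F}$ into the expression for $\overline S$ in Lemma 2.9 and verify that everything collapses to $\overline n\,a+b=\overline S^{M}$.
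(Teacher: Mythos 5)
Your overall route---trace the component Ricci identities of Proposition 3.2 and use Lemma 2.9 together with the definition of scalar curvature---is exactly the paper's route (the paper's proof is literally ``Similarly we have'', referring back to how Proposition 3.3 follows from Lemma 2.8 and Proposition 3.1), and your handling of $\overline{S}^{M}$, of $S^{B}$ (including why $b$ appears in case 1 and drops out in case 2), and of the $bf^{2}$ term is correct. But there is a genuine gap at precisely the step you flagged as the crux: the fibre contraction of the two $P$-terms. Since $\pi(X)=g(X,P)$ is built from the \emph{warped} metric, a $g_{F}$-orthonormal frame $\{F_{j}\}$ gives $\pi(F_{j})=f^{2}g_{F}(F_{j},P)$, hence $\sum_{j}\pi(F_{j})^{2}=f^{4}g_{F}(P,P)=f^{2}\pi(P)$; likewise $\sum_{j}g(F_{j},\nabla_{F_{j}}P)=f^{2}\sum_{j}g_{F}(F_{j},\nabla^{F}_{F_{j}}P)=f^{2}div_{F}P$, because the $B$-component of $\nabla_{F_{j}}P$ is $g$-orthogonal to $F_{j}$. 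This is the very same mechanism by which you correctly converted $bf^{4}g_{F}(V,U)g_{F}(W,U)$ into $bf^{2}$ via $f^{2}g_{F}(U,U)=1$; applied consistently it yields $(1-\overline{n})f^{2}div_{F}P+(\overline{n}-1)f^{2}\pi(P)$, not the bare $(1-\overline{n})div_{F}P+(\overline{n}-1)\pi(P)$ that you assert (and that the proposition displays).

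Your own proposed closing check would expose this. In Lemma 2.9 the unweighted $div_{F}P$ and $\pi(P)$ are correct because there the fibre trace is taken in the $g$-orthonormal frame $F_{j}/f$, whose factor $1/f^{2}$ cancels the $f^{2}$; but substituting $S^{B}=n_{1}a+n_{2}\frac{\Delta_{B}f}{f}+b$ and the unweighted $S^{F}$ into equation (6) leaves an uncancelled remainder $(\overline{n}-1)\bigl(1-f^{-2}\bigr)\bigl(div_{F}P-\pi(P)\bigr)$ instead of collapsing to $\overline{n}a+b$, whereas the $f^{2}$-weighted version collapses exactly. So as written the proposal does not establish the displayed formulas: either you carry the $f^{2}$ factors through---in which case you prove a corrected statement, and the printed one inherits the omission (harmless for its later use in Theorem 3.8, where $div_{F}P=\pi(P)=0$ is assumed)---or you must supply a convention for $\pi$ and $div_{F}$ under which the contraction you assert is actually true.
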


\subsection{\large Obstructions to the existence of quasi-Einstein warped products with a semi-symmetric non-metric connection}
In this section, we prove some obstructions to the existence of quasi-Einstein warped products with a semi-symmetric non-metric connection. We consider the following four cases:
\subsubsection{\normalsize{\bfseries When \boldmath $P\in\Gamma(TB),\;U\in\Gamma(TB).$}}
In the second equation of $(7),$ let:
$$f\Delta_{B}f+(1-n_{2})|grad_{B}f|_{B}^{2}+(1-\overline{n})fPf+af^{2}\triangleq \alpha_{1}.   \eqno{(15)} $$
Then the equation $(7)$ becomes:
\begin{equation}
\begin{cases}\setcounter{equation}{16}
\overline{{Ric}}^{B}(X,Y)=ag_{B}(X,Y)-n_{2}\Big[\frac{H^f_{B}(X,Y)}{f}+g(Y,\nabla_{X}P)-\pi(X)\pi(Y)\Big]
+bg_{B}(X,U)g_{B}(Y,U),   \\
\overline{Ric}^{F}(V,W)=\alpha_{1} g_{F}(V,W),  \\
f\Delta_{B}f+(1-n_{2})|grad_{B}f|_{B}^{2}+(1-\overline{n})fPf+af^{2}=\alpha_{1}.
\end{cases}
\end{equation}
\newtheorem{Theorem}[Proposition]{Theorem}
\begin{Theorem}
Let $M=B \times_{f}F$ be a quasi-Einstein warped product with $B$ compact and connected, ${\rm dim}B=n_{1}\geqslant1,\;{\rm dim}F=n_{2}\geqslant2,\;{\rm dim}M=\overline n=n_{1}+n_{2},\;P\in\Gamma(TB)$ and $U\in\Gamma(TB).$
If $n_{1}=1,\;div_{B}P=c_{1}$ and $\pi(P)=c_{2}$ are both constants, then $M$ is a sample Riemannian product.
\end{Theorem}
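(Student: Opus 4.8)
The plan is to push everything through the vanishing of the curvature of the one-dimensional base. The decisive observation is that when $n_{1}=1$ the scalar curvature $\overline S^{B}$ of the base, computed for the induced semi-symmetric non-metric connection, vanishes. From $(4)$ one has $\overline R^{B}(X,X)=0$ for every $X$, since the Levi--Civita term $R^{B}(X,X)$ vanishes by antisymmetry and the two correction pairs cancel. As $\dim B=1$, an orthonormal frame reduces to a single unit field $E_{1}$, so $\overline{Ric}^{B}(E_{1},E_{1})=\langle\overline R^{B}(E_{1},E_{1})E_{1},E_{1}\rangle=0$; therefore $\overline{Ric}^{B}\equiv0$ and $\overline S^{B}=0$. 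This is the fact that turns the otherwise tensorial first line of $(7)$ into an exploitable scalar identity.

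Next I would feed $\overline S^{B}=0$ into the scalar-curvature formula of Proposition 3.3(1). Setting its right-hand side equal to zero and inserting $n_{1}=1$, $div_{B}P=c_{1}$ and $\pi(P)=c_{2}$ yields
$$0=a+n_{2}\frac{\Delta_{B}f}{f}-n_{2}c_{1}+n_{2}c_{2}+b.$$
Since $a,b,c_{1},c_{2}$ are constants and $n_{2}\geqslant2$, this forces $\Delta_{B}f/f$ to equal the constant $\lambda:=(n_{2}c_{1}-n_{2}c_{2}-a-b)/n_{2}$, i.e. $\Delta_{B}f=\lambda f$ on $B$. Equivalently, one may simply evaluate the first line of $(7)$ at the unit frame $E_{1}$, using the one-dimensional identities $H^{f}_{B}(E_{1},E_{1})=-\Delta_{B}f$, $g(E_{1},\nabla_{E_{1}}P)=div_{B}P$, $\pi(E_{1})^{2}=\pi(P)$ and $g_{B}(E_{1},U)^{2}=1$, to reach the same relation.

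Finally I would invoke compactness of $B$. Integrating $\Delta_{B}f=\lambda f$ over the closed manifold $B$ and using $\int_{B}\Delta_{B}f\,dV_{B}=0$ gives $\lambda\int_{B}f\,dV_{B}=0$; as $f>0$ forces $\int_{B}f\,dV_{B}>0$, we conclude $\lambda=0$, hence $\Delta_{B}f=0$. A harmonic function on a compact connected manifold is constant, which follows from $\int_{B}|grad_{B}f|^{2}_{B}\,dV_{B}=\int_{B}f\,\Delta_{B}f\,dV_{B}=0$ (with the convention $\Delta_{B}f=-\mathrm{Tr}\,H^{f}_{B}$). Thus $f$ is constant and $M$ is a simply Riemannian product. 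In this $n_{1}=1$ case one could also argue concretely that $B\cong S^{1}$ and that $f''=-\lambda f$ admits no positive periodic solution unless $\lambda=0$, in which case $f$ is constant.

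The step I expect to be the main obstacle is the first one: justifying carefully that the \emph{non-metric} connection on the one-dimensional base still produces $\overline S^{B}=0$, so that equating it with the formula of Proposition 3.3 is legitimate. After that the argument is routine, provided one keeps the sign convention $\Delta_{B}f=-\mathrm{Tr}\,H^{f}_{B}$ consistent through both the trace identity and the integration by parts on the closed base.
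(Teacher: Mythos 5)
Your proposal is correct and follows essentially the same route as the paper's own proof: both substitute $\overline{S}^{B}=0$ (valid since $n_{1}=1$) into the second equation of $(11)$ to obtain $\Delta_{B}f=c_{0}f$ with $c_{0}=c_{1}-c_{2}-\frac{a+b}{n_{2}}$ constant, and then conclude from compactness of $B$ that $f$ is constant. The only difference is that you fill in two steps the paper leaves implicit --- the verification via equation $(4)$ that the one-dimensional base has $\overline{S}^{B}=0$ even for the semi-symmetric non-metric connection, and the integration argument ($\int_{B}\Delta_{B}f\,dV_{B}=0$ forcing $c_{0}=0$, then $\int_{B}|grad_{B}f|^{2}_{B}\,dV_{B}=0$) behind the paper's one-line ``the Laplacian has constant sign and hence $f$ is constant.''
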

\begin{proof}
If $n_{1}=1,$ then $\overline{S}^{B}=0,$ by the second equation of $(11)$ and $div_{B}P=c_{1},\;\pi(P)=c_{2}$
we can get
$$0=a+n_{2}\frac{\Delta_{B}f}{f}-n_{2}c_{1}+n_{2}c_{2}+b\Rightarrow \frac{\Delta_{B}f}{f}=c_{1}-c_{2}-\frac{a+b}{n_{2}}\triangleq c_{0}\Rightarrow \Delta_{B}f=c_{0}f.$$
Then the Laplacian has constant sign and hence $f$ is constant.
\end{proof}

\begin{Theorem}
Let $M=B \times_{f}F$ be a quasi-Einstein warped product with $B$ compact and connected, ${\rm dim}B=n_{1}\geqslant2,\;{\rm dim}F=n_{2}\geqslant2,\;{\rm dim}M=\overline n=n_{1}+n_{2},\;P\in\Gamma(TB)$ and $U\in\Gamma(TB).$
If $a\geqslant0$ and $Pf\geqslant0,$ then $M$ is a sample Riemannian product.
\end{Theorem}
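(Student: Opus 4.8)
The plan is to read off from the fibre Ricci equation that the bracketed coefficient is a constant on $B$, and then to test the resulting base equation at the extrema of $f$, exploiting $a\geq0$, $Pf\geq0$, $n_{2}\geq2$ and $\overline n\geq2$. Write $\alpha_{1}=f\Delta_{B}f+(1-n_{2})|grad_{B}f|_{B}^{2}+(1-\overline n)fPf+af^{2}$ as in $(15)$. The second equation of $(7)$ says $\overline{Ric}^{F}(V,W)=\alpha_{1}g_{F}(V,W)$; here $\overline{Ric}^{F}$ and $g_{F}$ are tensors on the fibre, whereas $\alpha_{1}$ is built from $f,\Delta_{B}f,|grad_{B}f|_{B}$ and $Pf$, all of which are functions on the base since $P\in\Gamma(TB)$. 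Fixing one fibre vector $V_{0}$ with $g_{F}(V_{0},V_{0})\neq0$ gives $\alpha_{1}=\overline{Ric}^{F}(V_{0},V_{0})/g_{F}(V_{0},V_{0})$, independent of the base point; as $B$ is connected, $\alpha_{1}$ is a constant. Dividing by $f>0$ I record
$$\Delta_{B}f=\frac{\alpha_{1}}{f}+(n_{2}-1)\frac{|grad_{B}f|_{B}^{2}}{f}+(\overline n-1)Pf-af,\qquad(\star)$$
and note that the convention $\Delta_{B}f=-Tr(H^{f})$ gives, on the compact $B$, both $\int_{B}\Delta_{B}f=0$ and $\int_{B}f\Delta_{B}f=\int_{B}|grad_{B}f|_{B}^{2}$.

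Next I evaluate at the extrema of $f$. By compactness $f$ attains a maximum at some $p$ and a minimum at some $q$; at these critical points $grad_{B}f=0$, so $|grad_{B}f|_{B}^{2}$ and $Pf=g(P,grad_{B}f)$ vanish, while the sign convention forces $\Delta_{B}f(p)\geq0$ and $\Delta_{B}f(q)\leq0$. Substituting into the definition of $\alpha_{1}$ yields $\alpha_{1}=f(p)\Delta_{B}f(p)+af(p)^{2}\geq af(p)^{2}$ and $\alpha_{1}=f(q)\Delta_{B}f(q)+af(q)^{2}\leq af(q)^{2}$; in particular $\alpha_{1}\geq0$. If $a>0$ then $af(p)^{2}\leq\alpha_{1}\leq af(q)^{2}\leq af(p)^{2}$, the last step because $0<f(q)\leq f(p)$, so equality holds throughout and $f(p)=f(q)$, i.e. $f$ is constant. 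If $a=0$ the term $-af$ drops out of $(\star)$ and, using $\alpha_{1}\geq0$, $n_{2}\geq2$, $\overline n\geq2$ and $Pf\geq0$, every term on the right of $(\star)$ is nonnegative, so $\Delta_{B}f\geq0$ everywhere; integrating and using $\int_{B}\Delta_{B}f=0$ forces $\Delta_{B}f\equiv0$, whence $\int_{B}|grad_{B}f|_{B}^{2}=\int_{B}f\Delta_{B}f=0$ and again $f$ is constant. In both cases $f$ is constant and $M$ is a simply Riemannian product.

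I expect the borderline case $a=0$ to be the main obstacle: there the two-sided pinch at the extrema degenerates, yielding only $\alpha_{1}=0$ and $\Delta_{B}f(p)=\Delta_{B}f(q)=0$, so $f(p)=f(q)$ cannot be read off directly, and one must instead promote the pointwise sign of $(\star)$ to the global inequality $\Delta_{B}f\geq0$ and close with the divergence theorem. The only other delicate point is the sign bookkeeping, which relies squarely on the paper's convention $\Delta f=-Tr(H^{f})$ and on $n_{2}-1\geq0$, $\overline n-1\geq0$.
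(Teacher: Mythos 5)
Your proof is correct, and its engine is the same as the paper's: the constancy of $\alpha_{1}$, evaluation at extrema of $f$ on the compact base $B$, and sign bookkeeping under the convention $\Delta f=-Tr(H^{f})$. Two differences are worth recording. First, you actually prove that $\alpha_{1}$ is constant, by separating variables in the fibre equation $\overline{Ric}^{F}(V,W)=\alpha_{1}g_{F}(V,W)$ ($\alpha_{1}$ is a function on $B$ that equals a function on $F$); the paper uses this fact silently when it equates the value of $(15)$ at the maximum point $z$ with its value at an arbitrary point, so your explicit step fills a small gap in the exposition. Second, your case split $a>0$ versus $a=0$ is avoidable: the paper subtracts the max-point identity $f(z)\Delta_{B}f(z)+af^{2}(z)=\alpha_{1}$ from $(15)$ at a general point to obtain $f\Delta_{B}f=f(z)\Delta_{B}f(z)+(n_{2}-1)|grad_{B}f|_{B}^{2}+(\overline n-1)fPf+a\big[f^{2}(z)-f^{2}\big]\geqslant0$, which holds uniformly for all $a\geqslant0$ since $f(z)$ is the maximum, and then concludes exactly as in your $a=0$ branch (a Laplacian of constant sign on a compact connected manifold vanishes, whence $\int_{B}|grad_{B}f|_{B}^{2}=\int_{B}f\Delta_{B}f=0$ and $f$ is constant). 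So the ``borderline case'' $a=0$ that you flag as the main obstacle is not borderline on the paper's route, and in fact your treatment of it reproduces the paper's argument; conversely, your $a>0$ branch, which pinches $af(p)^{2}\leqslant\alpha_{1}\leqslant af(q)^{2}$ between the maximum $p$ and minimum $q$ (using $\Delta_{B}f(q)\leqslant0$ there, correctly signed under the paper's convention), is a pleasant elementary shortcut that dispenses with the integration step entirely in that case.
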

\begin{proof}
Let $z\in B$ such that $f(z)$ is the maximum of $f$ on $B.$ Then $grad_{B}f(z)=0$ and $\Delta_{B}f(z)\geqslant0.$
So $Pf(z)=g(grad_{B}f(z),P)=0.$
Writing the equation $(15)$ in the point $z$ we obtain:
$$f(z)\Delta_{B}f(z)+af^{2}(z)=\alpha_{1}.   \eqno{(17)}$$
By equations $(15)$ and $(17),$ we have:
$$ f(z)\Delta_{B}f(z)+af^{2}(z)=f\Delta_{B}f+(1-n_{2})|grad_{B}f|_{B}^{2}+(1-\overline{n})fPf+af^{2}$$
$$ \Rightarrow f\Delta_{B}f=f(z)\Delta_{B}f(z)+(n_{2}-1)|grad_{B}f|_{B}^{2}+(\overline{n}-1)fPf+a\big[f^{2}(z)-f^{2}\big]\geqslant0$$
Then the Laplacian has constant sign and hence $f$ is constant.
\end{proof}

\begin{Theorem}
Let $M=B \times_{f}F$ be a quasi-Einstein warped product with $B$ compact and connected, ${\rm dim}B=n_{1}\geqslant2,\;{\rm dim}F=n_{2}\geqslant2,\;{\rm dim}M=\overline n=n_{1}+n_{2},\;P\in\Gamma(TB)$ and $U\in\Gamma(TB).$
If $a<0,\;Pf\geqslant0,$ and $\overline{S}^{F}\geqslant0$, then $M$ is a sample Riemannian product.
\end{Theorem}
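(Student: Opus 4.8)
The plan is to exploit the defining relation $(15)$ for the warped fibre directly, rather than the maximum-principle subtraction used in Theorem 3.7. The argument of Theorem 3.7 evaluates $(15)$ at a point $z$ where $f$ attains its maximum and subtracts, which produces the term $a[f^{2}(z)-f^{2}]$; for $a\geqslant0$ this is nonnegative, but here $a<0$ forces it to be nonpositive, so that route breaks down. The extra hypothesis $\overline{S}^{F}\geqslant0$ is precisely what compensates for this loss of sign, and the key observation is that when $a<0$ the term involving $a$ actually helps rather than hurts.

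First I would convert $\overline{S}^{F}\geqslant0$ into information about $\alpha_{1}$. By the third equation of $(11)$ we have $\overline{S}^{F}=n_{2}\alpha_{1}$, and since $n_{2}\geqslant2>0$ this gives $\alpha_{1}\geqslant0$ on $B$. Next I would solve the defining relation $(15)$ for $f\Delta_{B}f$, obtaining
$$f\Delta_{B}f=\alpha_{1}+(n_{2}-1)|grad_{B}f|_{B}^{2}+(\overline{n}-1)fPf-af^{2}.$$
Then I would check that every term on the right is nonnegative: $\alpha_{1}\geqslant0$ by the previous step; $(n_{2}-1)|grad_{B}f|_{B}^{2}\geqslant0$ since $n_{2}\geqslant2$; $(\overline{n}-1)fPf\geqslant0$ since $\overline{n}=n_{1}+n_{2}\geqslant4$, $f>0$ and $Pf\geqslant0$; and crucially $-af^{2}>0$ because $a<0$ and $f>0$. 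Hence $f\Delta_{B}f\geqslant0$, and since $f>0$ on $B$ we conclude $\Delta_{B}f\geqslant0$ everywhere.

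Finally I would invoke compactness exactly as in Theorems 3.6 and 3.7. On the compact connected manifold $B$ the divergence theorem gives $\int_{B}\Delta_{B}f\,dV_{B}=0$ (this is compatible with the convention $\Delta f=-Tr(H^{f})$, since $Tr(H^{f})=\mathrm{div}(grad_{B}f)$ integrates to zero), so a function of constant sign with vanishing integral must vanish identically, i.e. $\Delta_{B}f\equiv0$. Feeding $\Delta_{B}f\equiv0$ back into the displayed identity, a sum of nonnegative terms equal to zero forces each to vanish; in particular $(n_{2}-1)|grad_{B}f|_{B}^{2}=0$, whence $grad_{B}f\equiv0$, and since $B$ is connected $f$ is constant, so $M$ is a simply Riemannian product. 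I do not expect a genuine obstacle here: the only point requiring care is the sign bookkeeping, namely verifying that $\overline{S}^{F}\geqslant0$ yields $\alpha_{1}\geqslant0$ so that the term $-af^{2}$ — the one that could have spoiled positivity for $a<0$ — is instead exactly the term that guarantees it. The real content is recognizing that the hypothesis $\overline{S}^{F}\geqslant0$ cleanly replaces the failed maximum-principle step of Theorem 3.7.
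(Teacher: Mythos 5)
Your proposal is correct and is essentially the paper's own argument: both derive $\alpha_{1}\geqslant0$ from $\overline{S}^{F}=n_{2}\alpha_{1}$ via the third equation of $(11)$, rearrange $(15)$ so that the $a<0$ term $-af^{2}$ contributes positively, and conclude $\Delta_{B}f\geqslant0$ on the compact base, forcing $f$ constant. The only difference is that you spell out the final step (vanishing integral of the Laplacian, then $|grad_{B}f|^{2}_{B}=0$) which the paper compresses into ``so $f$ is constant,'' a matter of exposition rather than of method.
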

\begin{proof}
From the third equation of $(11)$ and the equation $(15),$ we get $\overline{S}^{F}=n_{2}\alpha_{1}.$ Since $\overline{S}^{F}\geqslant0,$ it follows
that $\alpha_{1}\geqslant0.$ Then the equation $(15)$ becomes:
$$f\Delta_{B}f+af^{2}=\alpha_{1}+(n_{2}-1)|grad_{B}f|_{B}^{2}+(\overline{n}-1)fPf\geqslant0$$
$$\Rightarrow f\Delta_{B}f+af^{2}\geqslant0 \Rightarrow\Delta_{B}f\geqslant-af>0.$$
So $f$ is constant.
\end{proof}
\subsubsection{\normalsize{\bfseries When \boldmath $P\in\Gamma(TB),\;U\in\Gamma(TF).$}}
\begin{Theorem}
Let $M=B \times_{f}F$ be a quasi-Einstein warped product with $B$ compact and connected, ${\rm dim}B=n_{1}\geqslant1,\;{\rm dim}F=n_{2}\geqslant2,\;{\rm dim}M=\overline n=n_{1}+n_{2},\;P\in\Gamma(TB)$ and $U\in\Gamma(TF).$
If $n_{1}=1,\;div_{B}P=c_{1}$ and $\pi(P)=c_{2}$ are both constants, then $M$ is a sample Riemannian product.
\end{Theorem}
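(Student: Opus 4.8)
The plan is to run the argument used for the corresponding theorem in the case $U\in\Gamma(TB)$ almost verbatim, the only substantive change being that the generator now lives in the fibre, so I must invoke the base scalar-curvature identity of equation $(12)$ in place of that of $(11)$. The crucial structural input is purely dimensional: since $n_1=1$ the base $B$ is one-dimensional, and as the curvature operator $\overline R^B(X,Y)$ is antisymmetric in $X,Y$ for any linear connection, we have $\overline R^B(E_1,E_1)=0$, hence $\overline{Ric}^B\equiv 0$ and $\overline{S}^B=0$. This vanishing is insensitive to the non-metric character of $\overline\nabla$, and it is exactly what turns the base scalar-curvature formula into a constraint on $f$.

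First I would substitute $n_1=1$, $div_BP=c_1$ and $\pi(P)=c_2$ into the second equation of $(12)$ and impose $\overline{S}^B=0$, which gives
$$0=a+n_2\frac{\Delta_Bf}{f}-n_2c_1+n_2c_2.$$
In contrast with the earlier $U\in\Gamma(TB)$ case, the scalar $b$ does not appear here: with $U\in\Gamma(TF)$ the generator feeds into $\overline{S}^F$ (through the summand $bf^2$) and not into $\overline{S}^B$. Solving for the Laplacian yields
$$\frac{\Delta_Bf}{f}=c_1-c_2-\frac{a}{n_2}\triangleq c_0,$$
a constant, so that $\Delta_Bf=c_0f$ holds on all of $B$.

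It then remains to deduce that $f$ is constant. Since $f>0$ everywhere, the equation $\Delta_Bf=c_0f$ shows that $\Delta_Bf$ has everywhere the sign of $c_0$; integrating over the compact manifold $B$ and using $\int_B\Delta_Bf=0$ forces $c_0\int_Bf=0$, and as $\int_Bf>0$ this gives $c_0=0$. Hence $\Delta_Bf=0$, so $f$ is harmonic on the compact connected base $B$ and is therefore constant, whence by Definition $2.1$ the warped product $M=B\times_fF$ is a simply Riemannian product. The only real subtlety is bookkeeping rather than analysis: one must select equation $(12)$ rather than $(11)$, since the two expressions for $\overline{S}^B$ differ precisely by the $b$-term, and with the correct identity in hand the compactness step is identical to that of the earlier theorem.
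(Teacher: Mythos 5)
Your proposal is correct and follows the paper's proof essentially verbatim: the paper likewise imposes $\overline{S}^{B}=0$ for $n_{1}=1$, substitutes into the second equation of $(12)$ to obtain $\Delta_{B}f=c_{0}f$ with $c_{0}=c_{1}-c_{2}-\frac{a}{n_{2}}$, and concludes that $f$ is constant because the Laplacian has constant sign on the compact base. The only difference is that you spell out the compactness step (integrating $\Delta_{B}f$ over $B$ to force $c_{0}=0$, then harmonicity) and justify $\overline{S}^{B}=0$ via antisymmetry of the curvature operator, both of which the paper leaves implicit.
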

\begin{proof}
If $n_{1}=1,$ then $\overline{S}^{B}=0,$ by the second equation of $(12)$ and $div_{B}P=c_{1},\;\pi(P)=c_{2}$
we can get
$$0=a+n_{2}\frac{\Delta_{B}f}{f}-n_{2}c_{1}+n_{2}c_{2}\Rightarrow \frac{\Delta_{B}f}{f}=c_{1}-c_{2}-\frac{a}{n_{2}}\triangleq c_{0}\Rightarrow \Delta_{B}f=c_{0}f.$$
Then the Laplacian has constant sign and hence $f$ is constant.
\end{proof}
\begin{Theorem}
Let $M=B \times_{f}F$ be a quasi-Einstein warped product, ${\rm dim}B=n_{1}\geqslant2,\;{\rm dim}F=n_{2}\geqslant2,\;{\rm dim}M=\overline n=n_{1}+n_{2},\;P\in\Gamma(TB)$ and $U\in\Gamma(TF).$
If $b\neq0,$ then $M$ is a sample Riemannian product.
\end{Theorem}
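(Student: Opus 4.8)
The plan is to read everything off the second equation of $(8)$, which I abbreviate as
$$\overline{Ric}^{F}(V,W)=\alpha\,g_{F}(V,W)+bf^{4}g_{F}(V,U)g_{F}(W,U),$$
with $\alpha:=f\Delta_{B}f+(1-n_{2})|grad_{B}f|_{B}^{2}+(1-\overline n)fPf+af^{2}$. The decisive structural remark is that the left-hand side $\overline{Ric}^{F}$ and the fibre metric $g_{F}$ are intrinsic to $F$ and hence do not depend on the base point, whereas $\alpha$ and $f$ are functions on $B$ alone. Fixing a fibre point $x_{0}$, I would treat this as an identity of symmetric bilinear forms on $T_{x_{0}}F$ in which only $\alpha(p)$, $f(p)$ and the generator $U_{(p,x_{0})}$ move as $p$ ranges over $B$, while the form $\overline{Ric}^{F}_{x_{0}}$ on the left is frozen. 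First I would note that the last term is rank one: writing $Q_{p}(V,W)=g_{F}(V,U_{(p,x_{0})})g_{F}(W,U_{(p,x_{0})})$, it is nonzero since $U$ is a unit field, and its $g_{F}$-trace equals $g_{F}(U,U)=f^{-2}$ because $g(U,U)=f^{2}g_{F}(U,U)=1$. Thus $R_{p}:=\overline{Ric}^{F}_{x_{0}}-\alpha(p)g_{F}=bf(p)^{4}Q_{p}$ has rank exactly one (here $b\neq0$ is used) and $g_{F}$-trace $bf(p)^{2}$, which is just the fibre scalar-curvature identity recorded in Proposition 3.3(2).

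The main obstacle is precisely that $U$ need not be a fixed vector field on $F$: the quasi-Einstein condition pins down only its $g$-length, so $U_{(p,x_{0})}$ may rotate with $p$, and one cannot naively equate the rank-one terms at two different base points. I would neutralize this by a trace-then-compare step. Fix a reference point $p_{0}$ and subtract the identity at $p$ from that at $p_{0}$: this yields $R_{p}=R_{p_{0}}+c(p)g_{F}$ for a scalar $c(p)$, and taking the $g_{F}$-trace determines it outright as $c(p)=\tfrac{b}{n_{2}}\big(f(p)^{2}-f(p_{0})^{2}\big)$, independently of the unknown direction of $U$.

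Finally I would diagonalize. Choosing a $g_{F}$-orthonormal basis of $T_{x_{0}}F$ whose first vector is aligned with $U_{(p_{0},x_{0})}$ turns $R_{p_{0}}$ into $\mathrm{diag}(bf(p_{0})^{2},0,\dots,0)$, so that $R_{p}=\mathrm{diag}\big(bf(p_{0})^{2}+c(p),\,c(p),\dots,c(p)\big)$. Since $R_{p}$ must have rank one, at most one diagonal entry may be nonzero. For $n_{2}\geqslant3$ there are at least two entries equal to $c(p)$, forcing $c(p)=0$; for $n_{2}=2$ the only alternative to $c(p)=0$ is $bf(p_{0})^{2}+c(p)=0$, which combined with the trace formula gives $f(p)^{2}=-f(p_{0})^{2}<0$ and is excluded by $f>0$. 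In every case $c(p)=0$, hence $f(p)^{2}=f(p_{0})^{2}$, and since $f>0$ and $p$ was arbitrary, $f$ is constant and $M$ is a simple Riemannian product. I would stress that the whole argument is pointwise and algebraic, which matches the fact that, unlike Theorems 3.5--3.7, this statement assumes neither compactness nor connectedness of $B$.
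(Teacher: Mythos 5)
Your proof is correct, but it takes a genuinely different route from the paper's. The paper's own proof is a one-line separation-of-variables argument: it chooses $V,W$ tangent to $F$ with $g_{F}(V,W)=0$ and $g_{F}(V,U)\neq0\neq g_{F}(W,U)$, so that the second equation of $(8)$ collapses to $\overline{Ric}^{F}(V,W)=bf^{4}g_{F}(V,U)g_{F}(W,U)$, and then concludes that $f$ is constant because the left side is a function on $F$ while $f^{4}$ is a function on $B$ (``the different domains of definition''). That argument silently assumes the factor $g_{F}(V,U)g_{F}(W,U)$ does not move with the base point, i.e.\ that $U$ is a fixed field lifted from $F$ --- exactly the obstacle you isolate; note moreover that under this strict reading the unit condition $1=g(U,U)=f^{2}g_{F}(U,U)$ by itself already forces $f$ to be constant, so the theorem would be nearly vacuous, which makes your broader reading (a fibre-tangent unit field that may rotate with $p$) the meaningful one. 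Your trace-and-rank argument handles it rigorously: the identities $R_{p}:=\overline{Ric}^{F}_{x_{0}}-\alpha(p)g_{F}=bf(p)^{4}Q_{p}$ (rank one since $b\neq0$, with $g_{F}$-trace $bf(p)^{2}$, using $g_{F}(U,U)=f^{-2}$), the comparison $R_{p}=R_{p_{0}}+c(p)g_{F}$ with $c(p)=\tfrac{b}{n_{2}}\bigl(f(p)^{2}-f(p_{0})^{2}\bigr)$, and the diagonalization $R_{p}=\mathrm{diag}\bigl(bf(p_{0})^{2}+c(p),c(p),\dots,c(p)\bigr)$ all check out, as does the case split: rank one forces $c(p)=0$ outright when $n_{2}\geqslant3$, and when $n_{2}=2$ the alternative leads to $f(p)^{2}=-f(p_{0})^{2}<0$, impossible since $f>0$. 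What your route buys is a pointwise, purely algebraic proof valid with no assumption on how $U$ varies over $M$, and --- as you correctly stress --- with no compactness or connectedness of $B$, which this theorem (unlike Theorems 3.5--3.7) indeed does not assume; what the paper's route buys is brevity, at the cost of an unstated rigidity hypothesis on the generator $U$.
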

\begin{proof}
Consider in the second equation of $(8)$ that $V,W$ are orthogonal vector fields tangent to $F$ such that $g_{M}(V,U)\neq0,$ and $g_{M}(W,U)\neq0.$  Then $g_{F}(V,W)=0,$ and we have
$$\overline{Ric}^{F}(V,W)=bf^{4}g_{F}(V,U)g_{F}(W,U).  \eqno{(18)}$$
Taking in consideration the different domains of definition of the functions that appear in the equation (18), we obtain that $f$ is constant.
\end{proof}
\subsubsection{\normalsize {\bfseries When \boldmath $P\in\Gamma(TF),\;U\in\Gamma(TB).$}}
In the second equation of $(9),$ let:
$$f\Delta_{B}f+(1-n_{2})|grad_{B}f|_{B}^{2}+af^{2}\triangleq \alpha_{2}.   \eqno{(19)} $$
Then the equation $(9)$ becomes:
\begin{equation}
\begin{cases}\setcounter{equation}{20}
\overline{{Ric}}^{B}(X,Y)=ag_{B}(X,Y)-n_{2}\frac{H^f_{B}(X,Y)}{f}+bg_{B}(X,U)g_{B}(Y,U),   \\
\overline{Ric}^{F}(V,W)=\alpha_{2}g_{F}(V,W)+(1-\overline n)g(W,\nabla_{V}P)+(\overline n-1)\pi(V)\pi(W),  \\
f\Delta_{B}f+(1-n_{2})|grad_{B}f|_{B}^{2}+af^{2}=\alpha_{2}.
\end{cases}
\end{equation}
\begin{Theorem}
Let $M=B \times_{f}F$ be a quasi-Einstein warped product with $B$ compact and connected, ${\rm dim}B=n_{1}\geqslant1,\;{\rm dim}F=n_{2}\geqslant2,\;{\rm dim}M=\overline n=n_{1}+n_{2},\;P\in\Gamma(TF)$ and $U\in\Gamma(TB).$
If $n_{1}=1,$ then $M$ is a sample Riemannian product.
\end{Theorem}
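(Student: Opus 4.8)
The plan is to exploit that a one-dimensional base is flat: this makes the base scalar-curvature identity force $\Delta_{B}f/f$ to be a constant, after which compactness kills that constant. Since here $P\in\Gamma(TF)$ and $U\in\Gamma(TB)$, the relevant scalar curvatures are those recorded in equation $(13)$; in particular the base carries $S^{B}=n_{1}a+n_{2}\frac{\Delta_{B}f}{f}+b$. The first thing I would note is that $\dim B=n_{1}=1$ makes $(B,g_{B})$ one-dimensional and hence flat, so $S^{B}\equiv 0$. It is worth recording why no extra hypotheses appear here: for $P\in\Gamma(TF)$ one has $\pi(Y)=g(Y,P)=0$ whenever $Y\in\Gamma(TB)$, so the induced connection on the base is simply $\nabla^{B}$ and the base scalar curvature carries no $\operatorname{div}_{B}P$ or $\pi(P)$ correction terms, unlike the case $P\in\Gamma(TB)$ treated earlier.

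Substituting $S^{B}=0$ and $n_{1}=1$ into that identity gives $0=a+n_{2}\frac{\Delta_{B}f}{f}+b$, whence $\frac{\Delta_{B}f}{f}=-\frac{a+b}{n_{2}}\triangleq c_{0}$ is constant and $\Delta_{B}f=c_{0}f$. Because $f:B\to(0,\infty)$ is strictly positive, the right-hand side, and therefore $\Delta_{B}f$, has constant sign on $B$. On the compact connected base this is decisive: integrating $\Delta_{B}f=c_{0}f$ against the volume element and using $\int_{B}\Delta_{B}f=0$ forces $c_{0}=0$, after which $f$ is harmonic and hence constant; alternatively one invokes the maximum principle directly. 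Thus $f$ is constant and $M$ reduces to a simple Riemannian product.

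As with the companion $n_{1}=1$ statements (Theorems in the cases $P\in\Gamma(TB)$), the argument is short and I do not expect any genuine obstacle. The only real point is the structural observation in the first step, namely that when $P$ is tangent to the fibre the base scalar curvature is the unmodified $S^{B}$; this is exactly what lets $n_{1}=1$ alone suffice, with no constancy assumption imposed on $\operatorname{div}_{B}P$ or $\pi(P)$.
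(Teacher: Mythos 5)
Your proposal is correct and follows the paper's proof essentially verbatim: both substitute $n_{1}=1$ (hence $S^{B}=0$) into the second equation of $(13)$ to get $\Delta_{B}f=c_{0}f$ with $c_{0}=-\frac{a+b}{n_{2}}$ constant, and then conclude from the constant sign of the Laplacian on the compact connected base that $f$ is constant. Your added remarks --- the integration argument $\int_{B}\Delta_{B}f=0$ forcing $c_{0}=0$, and the observation that $\pi$ vanishes on $\Gamma(TB)$ when $P\in\Gamma(TF)$ so no $div_{B}P$ or $\pi(P)$ hypotheses are needed --- merely make explicit what the paper leaves implicit.
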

\begin{proof}
If $n_{1}=1,$ then ${S}^{B}=0,$ by the second equation of $(13)$ we can get
$$0=a+n_{2}\frac{\Delta_{B}f}{f}+b\Rightarrow \frac{\Delta_{B}f}{f}=-\frac{a+b}{n_{2}}\triangleq c_{0}\Rightarrow
\Delta_{B}f=c_{0}f.$$
Then the Laplacian has constant sign and hence $f$ is constant.
\end{proof}

\begin{Theorem}
Let $M=B \times_{f}F$ be a quasi-Einstein warped product with $B$ compact and connected, ${\rm dim}B=n_{1}\geqslant2,\;{\rm dim}F=n_{2}\geqslant2,\;{\rm dim}M=\overline n=n_{1}+n_{2},\;P\in\Gamma(TF)$ and $U\in\Gamma(TB).$
If $a\geqslant0,$ then $M$ is a sample Riemannian product.
\end{Theorem}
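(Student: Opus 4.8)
The plan is to imitate the maximum-principle argument in the proof of Theorem 3.6, and the first observation is that the hypothesis $P\in\Gamma(TF)$ makes the argument \emph{strictly easier}: since $f$ lives on the base $B$ while $P$ is tangent to the fibre, we have $Pf=0$ identically. This is exactly why the term $(1-\overline n)fPf$ present in the defining relation (15) does not appear in its analogue (19), and it is the reason the present statement asks only for $a\geqslant0$, whereas Theorem 3.6 also had to assume $Pf\geqslant0$.

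Assuming for the moment that $\alpha_{2}$ is constant, I would argue as follows. Since $B$ is compact and connected, choose $z\in B$ where $f$ attains its maximum; then $grad_{B}f(z)=0$, and because $\Delta_{B}f=-Tr(H^{f})$ while the Hessian of $f$ is negative semi-definite at a maximum, we have $\Delta_{B}f(z)\geqslant0$. Evaluating (19) at $z$ (where the gradient term vanishes) gives $f(z)\Delta_{B}f(z)+af^{2}(z)=\alpha_{2}$, and subtracting this from (19) at an arbitrary point of $B$ yields
$$f\Delta_{B}f=f(z)\Delta_{B}f(z)+(n_{2}-1)|grad_{B}f|_{B}^{2}+a\big[f^{2}(z)-f^{2}\big].$$
Every term on the right is non-negative: $f(z)\Delta_{B}f(z)\geqslant0$ at the maximum, $(n_{2}-1)|grad_{B}f|_{B}^{2}\geqslant0$ since $n_{2}\geqslant2$, and $a[f^{2}(z)-f^{2}]\geqslant0$ since $a\geqslant0$ and $f(z)$ is the maximal value. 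Hence $f\Delta_{B}f\geqslant0$, and as $f>0$ this forces $\Delta_{B}f\geqslant0$ on all of $B$. The Laplacian therefore has constant sign on the compact connected base, so $f$ must be constant and $M$ is a simply Riemannian product.

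The step I expect to be the genuine obstacle is justifying that $\alpha_{2}$ is a constant, which is what makes the comparison of (19) at $z$ with (19) at a general point legitimate. In the situation of Theorem 3.6 this was automatic, because there the fibre is genuinely Einstein and $\alpha_{1}$ is its constant Einstein scalar (second line of (16)). Here the fibre Ricci identity (20) instead carries the extra terms $(1-\overline n)g(W,\nabla_{V}P)$ and $(\overline n-1)\pi(V)\pi(W)$, so one has to separate the base-dependent quantity $\alpha_{2}$ from the fibre data before the maximum-principle comparison applies; the natural route is to extract $\alpha_{2}$ from the fibre scalar-curvature identity (third line of (13)) and invoke the fact that a function on $B$ which agrees with fibre data on the connected product $B\times F$ must be constant. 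Once $\alpha_{2}$ is known to be constant, the remainder is exactly the non-negativity bookkeeping displayed above.
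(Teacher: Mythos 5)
Your proof is correct and follows the paper's own argument for this theorem essentially verbatim: evaluate the base function $f\Delta_{B}f+(1-n_{2})|grad_{B}f|_{B}^{2}+af^{2}=\alpha_{2}$ of equation $(19)$ at a point $z$ where $f$ attains its maximum on the compact connected base, subtract to get $f\Delta_{B}f=f(z)\Delta_{B}f(z)+(n_{2}-1)|grad_{B}f|_{B}^{2}+a\big[f^{2}(z)-f^{2}\big]\geqslant0$, and conclude that the Laplacian has constant sign, so $f$ is constant. Your closing remark about justifying the constancy of $\alpha_{2}$ addresses the one point the paper passes over in silence (it simply introduces $\alpha_{2}$ in $(19)$ and treats it as a constant), and the separation-of-variables route you sketch via the fibre equation in $(20)$ is the standard way to fill that gap.
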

\begin{proof}
Let $z\in B$ such that $f(z)$ is the maximum of $f$ on $B.$ Then $grad_{B}f(z)=0$ and $\Delta_{B}f(z)\geqslant0.$
Writing the equation $(19)$ in the point $z$ we obtain:
$$f(z)\Delta_{B}f(z)+af^{2}(z)=\alpha_{2}.   \eqno{(21)}$$
By equations $(19)$ and $(21),$ we have:
$$ f(z)\Delta_{B}f(z)+af^{2}(z)=f\Delta_{B}f+(1-n_{2})|grad_{B}f|_{B}^{2}+af^{2}$$
$$ \Rightarrow f\Delta_{B}f=f(z)\Delta_{B}f(z)+(n_{2}-1)|grad_{B}f|_{B}^{2}+a\big[f^{2}(z)-f^{2}\big]\geqslant0$$
Then the Laplacian has constant sign and hence $f$ is constant.
\end{proof}

\begin{Theorem}
Let $M=B \times_{f}F$ be a quasi-Einstein warped product with $B$ compact and connected, ${\rm dim}B=n_{1}\geqslant2,\;{\rm dim}F=n_{2}\geqslant2,\;{\rm dim}M=\overline n=n_{1}+n_{2},\;P\in\Gamma(TF)$ and $U\in\Gamma(TB).$
If $a<0,\;div_{F}P=0,\;\pi(P)=0,$ and $S^{F}\geqslant0$, then $M$ is a sample Riemannian product.
\end{Theorem}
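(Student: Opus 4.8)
The plan is to run the same scalar-curvature argument used in Theorem 3.7, now in the regime $P\in\Gamma(TF)$, $U\in\Gamma(TB)$, where the relevant data are the fibre scalar curvature formula in $(13)$ together with the abbreviation $(19)$ and the resulting system $(20)$. First I would extract $\alpha_{2}$ from the $S^{F}$ line of $(13)$: comparing it with $(19)$ gives $S^{F}=n_{2}\alpha_{2}+(1-\overline n)\,div_{F}P+(\overline n-1)\pi(P)$. Substituting the hypotheses $div_{F}P=0$ and $\pi(P)=0$ collapses this to $S^{F}=n_{2}\alpha_{2}$, and since $n_{2}\geqslant 2$ the assumption $S^{F}\geqslant 0$ forces $\alpha_{2}\geqslant 0$.

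Next I would feed $\alpha_{2}\geqslant 0$ back into the defining identity $(19)$. Rewriting $(19)$ as $f\Delta_{B}f+af^{2}=\alpha_{2}+(n_{2}-1)|grad_{B}f|_{B}^{2}$ and using $\alpha_{2}\geqslant 0$, $n_{2}-1\geqslant 1$ and $|grad_{B}f|_{B}^{2}\geqslant 0$, I obtain $f\Delta_{B}f+af^{2}\geqslant 0$. Because the warping function satisfies $f>0$, dividing by $f$ yields $\Delta_{B}f\geqslant -af$; and since $a<0$, $f>0$ give $-af>0$, this produces the pointwise strict inequality $\Delta_{B}f>0$ on all of $B$, exactly as in the last line of the proof of Theorem 3.7.

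The delicate step, and the one I would treat most carefully, is the passage from ``$\Delta_{B}f$ has constant sign'' to ``$f$ is constant.'' For the Laplacian normalized here as $\Delta_{B}f=-Tr(H^{f})$ one has $\int_{B}\Delta_{B}f\,dV_{B}=0$ on the compact manifold $B$ by the divergence theorem; hence a strict sign $\Delta_{B}f>0$ everywhere is incompatible with a nonconstant warping, and compactness together with connectedness of $B$ forces $f$ to be constant, i.e. $M$ is a simply Riemannian product. This is precisely the mechanism invoked in the concluding sentences of Theorems 3.5, 3.7, 3.8, 3.10 and 3.11, so the present proof should quote it in the same form.

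I do not expect any serious obstacle in the algebraic substitutions, which are mechanical applications of $(13)$, $(19)$ and $(20)$; the only points requiring attention are the sign convention for $\Delta_{B}$ and the strict positivity $-af>0$ coming from $a<0$ and $f>0$. It is worth recording that, strictly speaking, the derived inequality $\Delta_{B}f\geqslant -af>0$ cannot hold for a genuinely constant $f$ either, so the honest content of the statement is an obstruction: the combined hypotheses $a<0$, $div_{F}P=0$, $\pi(P)=0$ and $S^{F}\geqslant 0$ rule out any nontrivial quasi-Einstein warped product of this type.
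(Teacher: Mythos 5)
Your proposal is correct and follows the paper's own proof essentially verbatim: the same reduction $S^{F}=n_{2}\alpha_{2}$ from the third equation of $(13)$ under $div_{F}P=0$, $\pi(P)=0$, the same rearrangement of $(19)$ to get $f\Delta_{B}f+af^{2}\geqslant0$ and hence $\Delta_{B}f\geqslant-af>0$, and the same compactness conclusion. Your closing observations merely make explicit what the paper leaves terse --- that $\int_{B}\Delta_{B}f\,dV_{B}=0$ on compact $B$ rules out a strict sign, so the hypotheses are in fact mutually incompatible and the theorem is an obstruction result --- which is a faithful, slightly more careful rendering of the same argument, not a different one.
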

\begin{proof}
Consider that $div_{F}P=0,\;\pi(P)=0,$ then from the third equation of $(13)$ and the equation $(19),$ we get $S^{F}=n_{2}\alpha_{2}.$ Since $S^{F}\geqslant0,$ it follows
that $\alpha_{2}\geqslant0.$ Then the equation $(19)$ becomes:
$$f\Delta_{B}f+af^{2}=\alpha_{2}+(n_{2}-1)|grad_{B}f|_{B}^{2}\geqslant0
\Rightarrow f\Delta_{B}f+af^{2}\geqslant0 \Rightarrow\Delta_{B}f\geqslant-af>0.$$
So $f$ is constant.
\end{proof}
\subsubsection{\normalsize {\bfseries When \boldmath $P\in\Gamma(TF),\;U\in\Gamma(TF).$}}
\begin{Theorem}
Let $M=B \times_{f}F$ be a quasi-Einstein warped product with $B$ compact and connected, ${\rm dim}B=n_{1}\geqslant1,\;{\rm dim}F=n_{2}\geqslant2,\;{\rm dim}M=\overline n=n_{1}+n_{2},\;P\in\Gamma(TF)$ and $U\in\Gamma(TF).$
If $n_{1}=1,$ then $M$ is a sample Riemannian product.
\end{Theorem}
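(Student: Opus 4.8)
The plan is to run exactly the argument of Theorem 3.7, but now reading the base scalar curvature off the fourth case instead of the third. Since here both $P\in\Gamma(TF)$ and $U\in\Gamma(TF)$, the generator contributes nothing to the base and the semi-symmetric non-metric connection restricts to the Levi-Civita connection on $B$, so the relevant relation is the second equation of $(14)$, namely
$$S^{B}=n_{1}a+n_{2}\frac{\Delta_{B}f}{f}.$$
Because $U$ is tangent to the fibre, no $b$-term survives in $S^{B}$, which is the only structural difference from the $U\in\Gamma(TB)$ version in Theorem 3.7.

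First I would use the hypothesis $n_{1}=1$: a one-dimensional Riemannian manifold is flat, hence $S^{B}=0$. Substituting into the displayed relation gives
$$0=a+n_{2}\frac{\Delta_{B}f}{f}\;\Rightarrow\;\frac{\Delta_{B}f}{f}=-\frac{a}{n_{2}}\triangleq c_{0}\;\Rightarrow\;\Delta_{B}f=c_{0}f,$$
so the Laplacian of $f$ is a constant multiple of $f$. This reduces the whole statement to the elementary fact that such an $f$ must be constant on a compact connected base.

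The last step is the compactness argument, identical in mechanism to the earlier $n_{1}=1$ theorems. Because $f:B\to(0,\infty)$ is strictly positive, $\Delta_{B}f=c_{0}f$ has the fixed sign of $c_{0}$ and vanishes nowhere unless $c_{0}=0$; integrating over the compact $B$ forces $\int_{B}\Delta_{B}f\,dV_{B}=0$, which is incompatible with a constant nonzero sign, so $c_{0}=0$ and $\Delta_{B}f\equiv0$. A harmonic function on a compact connected manifold is constant, hence $f$ is constant and $M$ is a simple Riemannian product. I expect this sign-and-integration step to be the only point requiring care, but it is precisely the standard device already invoked in Theorems 3.1, 3.4 and 3.7, so the proof is genuinely routine once the correct scalar-curvature formula $(14)$ is selected.
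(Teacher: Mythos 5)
Your proposal is correct and follows essentially the same route as the paper: the paper's proof likewise takes $S^{B}=0$ from $n_{1}=1$, substitutes into the second equation of $(14)$ to get $\Delta_{B}f=c_{0}f$ with $c_{0}=-a/n_{2}$, and concludes that a Laplacian of constant sign on the compact connected base forces $f$ to be constant. Your integration-over-$B$ argument simply spells out the final step that the paper leaves as a one-line remark.
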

\begin{proof}
If $n_{1}=1,$ then ${S}^{B}=0,$ by the second equation of $(14)$ we can get
$$0=a+n_{2}\frac{\Delta_{B}f}{f}\Rightarrow \frac{\Delta_{B}f}{f}=-\frac{a}{n_{2}}\triangleq c_{0}\Rightarrow
\Delta_{B}f=c_{0}f.$$
Then the Laplacian has constant sign and hence $f$ is constant.
\end{proof}
\begin{Theorem}
Let $M=B \times_{f}F$ be a quasi-Einstein warped product, ${\rm dim}B=n_{1}\geqslant2,\;{\rm dim}F=n_{2}\geqslant2,\;{\rm dim}M=\overline n=n_{1}+n_{2},\;P\in\Gamma(TF)$ and $U\in\Gamma(TF).$
If $b\neq0,$ and $P$ is parallel on $F$ with respect to the Levi-Civita connection on $F,$ then $M$ is a sample Riemannian product.
\end{Theorem}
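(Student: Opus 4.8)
The plan is to argue exactly as in Theorem 3.6, using the parallelism of $P$ to remove the two extra terms that distinguish the present case. First I would exploit the hypothesis $\nabla^F_V P=0$ for $V\in\Gamma(TF)$: by the O'Neill formula $\nabla^M_VP=\nabla^F_VP-f\,g_F(V,P)\,grad_Bf$, the $M$-covariant derivative of $P$ is purely horizontal, so $g(W,\nabla_VP)=0$ for every $W\in\Gamma(TF)$ (and the divergence term $div_FP$ vanishes as well, which is relevant should one later invoke $(14)$). Second, I would write the non-metric term fibrewise: since $V,P\in\Gamma(TF)$, $\pi(V)=g(V,P)=f^2g_F(V,P)$, whence $\pi(V)\pi(W)=f^4g_F(V,P)g_F(W,P)$. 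With these two reductions the second equation of $(10)$ becomes
$$Ric^{F}(V,W)=\Theta\,g_F(V,W)+f^4\big[(\overline n-1)g_F(V,P)g_F(W,P)+b\,g_F(V,U)g_F(W,U)\big],$$
where $\Theta=f\Delta_Bf+(1-n_2)|grad_Bf|_B^2+af^2$ is a function of the base point $x\in B$ only.

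Next comes the core of Theorem 3.6: the left-hand side is the Ricci tensor of $(F,g_F)$, so it depends on the fibre point alone, whereas on the right $\Theta$ lives on $B$ and the bracketed coefficient of $f^4$ is a fibre quantity. I would therefore fix $q\in F$ and select $V,W\in T_qF$ with $g_F(V,W)=0$, killing the $\Theta$-term, and with
$$\kappa(V,W):=(\overline n-1)g_F(V,P)g_F(W,P)+b\,g_F(V,U)g_F(W,U)\neq0.$$
For such a pair the identity reduces to $Ric^{F}(V,W)=\kappa f^4(x)$ with $\kappa$ a nonzero constant; since the left side is constant along $B$, the factor $f^4$, and hence $f$, must be constant, i.e. $M$ is a simple Riemannian product. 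A convenient way to produce the pair is to take one vector orthogonal to $P$ (so that its $\pi$-value vanishes) and arrange both to have nonzero $U$-component while staying mutually orthogonal; this succeeds whenever $U$ and $P$ are in general position.

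The main obstacle is precisely the existence of such a pair, i.e. ruling out the degenerate case where $\kappa(V,W)=0$ for all $g_F$-orthogonal $V,W$. Viewing $\kappa=(\overline n-1)P^\flat\otimes P^\flat+b\,U^\flat\otimes U^\flat$ as a symmetric form on $T_qF$ and testing it on $e_i\pm e_j$ in a $g_F$-orthonormal basis shows that $\kappa$ annihilates every orthogonal pair if and only if $\kappa=\lambda g_F$ for some scalar $\lambda$. As $\kappa$ has rank at most $2$, for $n_2\ge3$ this can occur only with $\lambda=0$, and then $b\neq0$ forces $U\parallel P$ together with the exceptional relation $b|U|_{g_F}^2+(\overline n-1)|P|_{g_F}^2=0$; for $n_2=2$ the equality $\kappa=\lambda g_F$ is not excluded on dimensional grounds and must be analysed directly. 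Closing the proof therefore requires eliminating this alignment of $U$, $P$ and $b$ — I would attempt this by feeding the resulting relation $\Theta(x)=\mathrm{const}$ back into the base equation $(10)_1$ and the scalar-curvature identities $(14)$, or by arguing that a unit generator $U$ proportional to a nonzero parallel field $P$ with that precise norm ratio is incompatible with the warped-product structure; this is the step I expect to be genuinely delicate.
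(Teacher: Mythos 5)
Your main argument is exactly the paper's proof: it likewise picks orthogonal $V,W\in\Gamma(TF)$ with $g_{M}(V,U)\neq0$, $g_{M}(W,U)\neq0$ and $\pi(W)=0$, uses the parallelism of $P$ on $F$ to kill the $g(W,\nabla_{V}P)$ term, reduces the second equation of $(10)$ to $Ric^{F}(V,W)=bf^{4}g_{F}(V,U)g_{F}(W,U)$, and concludes that $f$ is constant because the left side depends only on the fibre while the right side carries the base-dependent factor $f^{4}$. The degenerate situation you flag at the end (e.g.\ $U$ proportional to $P$, in which no pair $V,W$ with the required properties exists, and the $n_{2}=2$ case where $(\overline n-1)\pi\otimes\pi+bA\otimes A$ could be proportional to the metric) is not treated in the paper either --- its proof simply asserts that such $V,W$ can be chosen --- so your proposal reproduces the published argument, and your closing caveat identifies a genuine gap that the paper shares rather than a defect peculiar to your write-up.
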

\begin{proof}
Consider in the second equation of $(10)$ that $V,W$ are orthogonal vector fields tangent to $F$ such that $g_{M}(V,U)\neq0,\;g_{M}(W,U)\neq0,$ and $\pi(W)=0.$ Then $g_{F}(V,W)=0.$ Since $P$ is parallel on $F$ with respect to the Levi-Civita connection on $F,$ we have $\nabla_{V}P=0,$ so the second equation of $(10)$ becomes:
$$Ric^{F}(V,W)=bf^{4}g_{F}(V,U)g_{F}(W,U).  \eqno{(22)}$$
Taking in consideration the different domains of definition of the functions that appear in the equation (22), we obtain that $f$ is constant.
\end{proof}

\section{\large Generalized quasi-Einstein warped products with a semi-symmetric non-metric connection}
\subsection{\large Ricci curvature and scalar curvature}
In this section, we compute Ricci curvature and scalar curvature of generalized quasi-Einstein warped products with a semi-symmetric non-metric connection.

\begin{Proposition}
Let $M=B \times_{f}F$ be a generalized quasi-Einstein warped product, ${\rm dim}B=n_{1},\;{\rm dim}F=n_{2},\;{\rm dim}M=\overline n=n_{1}+n_{2},\;X,Y\in\Gamma(TB),\;V,W\in\Gamma(TF),$ and $P\in\Gamma(TB).$\\
$1)$ If $U_{1}\in\Gamma(TB),\;U_{2}\in\Gamma(TB),$ then
\begin{equation}
\begin{cases}\setcounter{equation}{23}
\overline{Ric}^{B}(X,Y)=ag_{B}(X,Y)-n_{2}\Big[\frac{H^f_{B}(X,Y)}{f}+g(Y,\nabla_{X}P)-\pi(X)\pi(Y)\Big]
\\ \mbox{}\quad\quad\quad\quad\quad\quad+bg_{B}(X,U_{1})g_{B}(Y,U_{1})+cg_{B}(X,U_{2})g_{B}(Y,U_{2}),\\
\overline{Ric}^{F}(V,W)=g_{F}(V,W)\big[f\Delta_{B}f+(1-n_{2})|grad_{B}f|_{B}^{2}+(1-\overline{n})fPf+af^{2}\big].
\end{cases}
\end{equation}
$2)$ If $U_{1}\in\Gamma(TF),\;U_{2}\in\Gamma(TF),$ then
\begin{equation}
\begin{cases}\setcounter{equation}{24}
\overline{Ric}^{B}(X,Y)=ag_{B}(X,Y)-n_{2}\Big[\frac{H^f_{B}(X,Y)}{f}+g(Y,\nabla_{X}P)-\pi(X)\pi(Y)\Big],\\
\overline{Ric}^{F}(V,W)=g_{F}(V,W)\big[f\Delta_{B}f+(1-n_{2})|grad_{B}f|_{B}^{2}+(1-\overline{n})fPf+af^{2}\big]\\
\mbox{}\quad\quad\quad\quad\quad\quad+bf^{4}g_{F}(V,U_{1})g_{F}(W,U_{1})+cf^{4}g_{F}(V,U_{2})g_{F}(W,U_{2}).
\end{cases}
\end{equation}
$3)$ If $U_{1}\in\Gamma(TB),\;U_{2}\in\Gamma(TF),$ then
\begin{equation}
\begin{cases}\setcounter{equation}{25}
\overline{Ric}^{B}(X,Y)=ag_{B}(X,Y)-n_{2}\Big[\frac{H^f_{B}(X,Y)}{f}+g(Y,\nabla_{X}P)-\pi(X)\pi(Y)\Big]
\\ \mbox{}\quad\quad\quad\quad\quad\quad+bg_{B}(X,U_{1})g_{B}(Y,U_{1}),\\
\overline{Ric}^{F}(V,W)=g_{F}(V,W)\big[f\Delta_{B}f+(1-n_{2})|grad_{B}f|_{B}^{2}+(1-\overline{n})fPf+af^{2}\big]\\
\mbox{}\quad\quad\quad\quad\quad\quad+cf^{4}g_{F}(V,U_{2})g_{F}(W,U_{2}).
\end{cases}
\end{equation}
\end{Proposition}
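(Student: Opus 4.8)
The plan is to imitate the proof of Proposition~3.1, now carrying two defining $1$-forms. Since $P\in\Gamma(TB)$, the Ricci identities of Lemma~2.6 apply verbatim, and the only new input is the generalized quasi-Einstein condition
$$\overline{Ric}(X,Y)=ag(X,Y)+bA(X)A(Y)+cB(X)B(Y),$$
valid for all vector fields on $M$, where $A(\cdot)=g(\cdot,U_{1})$ and $B(\cdot)=g(\cdot,U_{2})$. First I would evaluate this identity on a pure base pair $X,Y\in\Gamma(TB)$ and on a pure fiber pair $V,W\in\Gamma(TF)$ separately, and then substitute into Lemma~2.6(1) and Lemma~2.6(3) respectively to solve for $\overline{Ric}^{B}$ and $\overline{Ric}^{F}$.

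The decisive structural facts are that $TB$ and $TF$ are $g$-orthogonal, so $g(X,V)=0$ for $X\in\Gamma(TB),\,V\in\Gamma(TF)$, and that on fiber vectors the ambient metric is $g(V,W)=f^{2}g_{F}(V,W)$. Consequently a generator lying in the factor not containing the argument contributes nothing: if $U_{i}\in\Gamma(TB)$ then the corresponding $1$-form annihilates every fiber vector, while if $U_{i}\in\Gamma(TF)$ it annihilates every base vector. This is what separates the three cases. In Case~1 ($U_{1},U_{2}\in\Gamma(TB)$) both quadratic terms survive on base pairs and both vanish on fiber pairs, leaving $\overline{Ric}(V,W)=af^{2}g_{F}(V,W)$; in Case~2 ($U_{1},U_{2}\in\Gamma(TF)$) the roles reverse and each fiber evaluation produces a factor $f^{4}$, since $A(V)A(W)=f^{4}g_{F}(V,U_{1})g_{F}(W,U_{1})$ and similarly for $B$; in Case~3 ($U_{1}\in\Gamma(TB),\,U_{2}\in\Gamma(TF)$) the $b$-term survives only on base pairs and the $c$-term only on fiber pairs.

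With these evaluations in hand, the two Ricci components follow by rote. For the base component I would rearrange Lemma~2.6(1) as
$$\overline{Ric}^{B}(X,Y)=\overline{Ric}(X,Y)-n_{2}\Big[\tfrac{H^{f}_{B}(X,Y)}{f}+g(Y,\nabla_{X}P)-\pi(X)\pi(Y)\Big],$$
and insert the case-dependent value of $\overline{Ric}(X,Y)$. For the fiber component I would use Lemma~2.6(3), replace $\overline{Ric}(V,W)$ by its case-dependent value and $g(V,W)$ by $f^{2}g_{F}(V,W)$, and clear the common factor $f^{2}$; collecting the terms $af^{2}$, $f\Delta_{B}f$, $(1-n_{2})|grad_{B}f|^{2}_{B}$ and $(1-\overline{n})fPf$ reproduces the bracketed scalar, to which the surviving $cf^{4}$ (or $bf^{4},cf^{4}$) terms are appended. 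Carrying this out in each of the three cases yields $(23)$, $(24)$ and $(25)$.

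There is no genuine obstacle here; the argument is a bookkeeping exercise. The only points demanding care are the consistent use of $g=g_{B}\oplus f^{2}g_{F}$ when passing between $g$ and $g_{F}$ on fiber arguments — which is the source of the $f^{2}$ versus $f^{4}$ weights on the linear versus quadratic terms — and the correct allocation of the orthogonal generators to the factor that kills them, so that no spurious cross terms appear.
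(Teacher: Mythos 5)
Your proposal is correct and follows the paper's own proof essentially verbatim: evaluate the generalized quasi-Einstein condition separately on base and fiber pairs, use $g=g_{B}\oplus f^{2}g_{F}$ and the orthogonality of the factors to kill or weight (by $f^{2}$, hence $f^{4}$ in quadratic terms) the generator contributions in each of the three cases, then substitute into Lemma~2.6(1) and 2.6(3) and rearrange. The only cosmetic slip is the phrase about ``clearing the common factor $f^{2}$'' --- nothing is cleared; the $f^{2}$ from $g(V,W)=f^{2}g_{F}(V,W)$ cancels the $f$- and $f^{2}$-denominators inside the bracket of Lemma~2.6(3), which is exactly the collection of terms you then describe.
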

\begin{proof}
$1)$ Since $M$ is generalized quasi-Einstein, we have $\overline{Ric}(X,Y)=ag_{B}(X,Y)+bg_{B}(X,U_{1})g_{B}(Y,U_{1})\\+cg_{B}(X,U_{2})g_{B}(Y,U_{2}),$ then use Lemma 2.6(1) we get the first equation of $(23).$\\
Since $M$ is generalized quasi-Einstein, we have
$\overline{Ric}(V,W)=af^{2}g_{F}(V,W)+bg(V,U_{1})g(W,U_{1})+cg(V,U_{2})g(W,U_{2})=af^{2}g_{F}(V,W),$ then use Lemma 2.6(3) we get the second equation of $(23).$\\
$2)$ Since $M$ is generalize quasi-Einstein, we have
$\overline{Ric}(X,Y)=ag_{B}(X,Y)+bg(X,U_{1})g(Y,U_{1})+cg(X,U_{2})g(Y,U_{2})=ag_{B}(X,Y),$ then use Lemma 2.6(1) we get the first equation of $(24).$\\
Since $M$ is generalize quasi-Einstein, we have
$\overline{Ric}(V,W)=af^{2}g_{F}(V,W)+bf^{4}g_{F}(V,U_{1})g_{F}(W,U_{1})+cf^{4}g_{F}(V,U_{2})g_{F}(W,U_{2}),$ then use Lemma 2.6(3) we get the second equation of $(24).$\\
$3)$ Since $M$ is generalize quasi-Einstein, we have $\overline{Ric}(X,Y)=ag_{B}(X,Y)+bg(X,U_{1})g(Y,U_{1})+cg(X,U_{2})g(Y,U_{2})=ag_{B}(X,Y)+bg_{B}(X,U_{1})g_{B}(Y,U_{1}),$ then use Lemma 2.6(1) we get the first equation of $(25).$\\
Since $M$ is generalize quasi-Einstein, we have
$\overline{Ric}(V,W)=af^{2}g_{F}(V,W)+bg(V,U_{1})g(W,U_{1})+cg(V,U_{2})g(W,U_{2})=
af^{2}g_{F}(V,W)+cf^{4}g_{F}(V,U_{2})g_{F}(W,U_{2}),$ then use Lemma 2.6(3) we get the second equation of $(25).$
\end{proof}
Using the same method, we can get the following Proposition:
\begin{Proposition}
Let $M=B \times_{f}F$ be a generalized quasi-Einstein warped product, ${\rm dim}B=n_{1},\;{\rm dim}F=n_{2},\;{\rm dim}M=\overline n=n_{1}+n_{2},\;X,Y\in\Gamma(TB),\;V,W\in\Gamma(TF),$ and $P\in\Gamma(TF).$\\
$1)$ If $U_{1}\in\Gamma(TB),\;U_{2}\in\Gamma(TB),$ then
\begin{equation}
\begin{cases}\setcounter{equation}{26}
Ric^{B}(X,Y)=ag_{B}(X,Y)-n_{2}\frac{H^f_{B}(X,Y)}{f}+bg_{B}(X,U_{1})g_{B}(Y,U_{1})+cg_{B}(X,U_{2})g_{B}(Y,U_{2}),  \\
Ric^{F}(V,W)=g_{F}(V,W)\big[f\Delta_{B}f+(1-n_{2})|grad_{B}f|_{B}^{2}+af^{2}\big]
+(1-\overline n)g(W,\nabla_{V}P)\\ \mbox{}\quad\quad\quad\quad\quad\quad+(\overline n-1)\pi(V)\pi(W).
\end{cases}
\end{equation}
$2)$ If $U_{1}\in\Gamma(TF),\;U_{2}\in\Gamma(TF),$ then
\begin{equation}
\begin{cases}\setcounter{equation}{27}
Ric^{B}(X,Y)=ag_{B}(X,Y)-n_{2}\frac{H^f_{B}(X,Y)}{f},   \\
Ric^{F}(V,W)=g_{F}(V,W)\big[f\Delta_{B}f+(1-n_{2})|grad_{B}f|_{B}^{2}+af^{2}\big]+(1-\overline n)g(W,\nabla_{V}P)\\ \mbox{}\quad\quad\quad\quad\quad+(\overline n-1)\pi(V)\pi(W)
+bf^{4}g_{F}(V,U_{1})g_{F}(W,U_{1})+cf^{4}g_{F}(V,U_{2})g_{F}(W,U_{2}).
\end{cases}
\end{equation}
$3)$ If $U_{1}\in\Gamma(TB),\;U_{2}\in\Gamma(TF),$ then
\begin{equation}
\begin{cases}\setcounter{equation}{28}
Ric^{B}(X,Y)=ag_{B}(X,Y)-n_{2}\frac{H^f_{B}(X,Y)}{f}+bg_{B}(X,U_{1})g_{B}(Y,U_{1}),   \\
Ric^{F}(V,W)=g_{F}(V,W)\big[f\Delta_{B}f+(1-n_{2})|grad_{B}f|_{B}^{2}+af^{2}\big]+(1-\overline n)g(W,\nabla_{V}P)\\ \mbox{}\quad\quad\quad\quad\quad+(\overline n-1)\pi(V)\pi(W)+cf^{4}g_{F}(V,U_{2})g_{F}(W,U_{2}).
\end{cases}
\end{equation}
\end{Proposition}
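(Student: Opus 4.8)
The plan is to follow exactly the strategy used to establish Proposition 4.1, now invoking Lemma 2.7 in place of Lemma 2.6 because here the vector field $P$ is tangent to the fibre $F$. First I would record the defining identity of a generalized quasi-Einstein manifold, namely $\overline{Ric}^{M}(X,Y)=ag(X,Y)+bA(X)A(Y)+cB(X)B(Y)$ with $A(\cdot)=g(\cdot,U_{1})$ and $B(\cdot)=g(\cdot,U_{2})$, and evaluate it separately on pairs of vector fields tangent to the base and pairs tangent to the fibre. The computation then splits into the three cases according to whether $U_{1}$ and $U_{2}$ lie in $\Gamma(TB)$ or $\Gamma(TF)$; only the base--base and fibre--fibre blocks are needed, since the stated formulas concern $Ric^{B}(X,Y)$ and $Ric^{F}(V,W)$ alone.

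The main bookkeeping is in evaluating the $1$-forms $A$ and $B$ on each factor. Because $g=g_{B}\oplus f^{2}g_{F}$, the base and fibre are orthogonal, so a generator tangent to one factor contributes nothing when paired with a vector tangent to the other. Concretely, for $X,Y\in\Gamma(TB)$ one has $A(X)=g_{B}(X,U_{1})$ when $U_{1}\in\Gamma(TB)$ but $A(X)=0$ when $U_{1}\in\Gamma(TF)$, and likewise for $B$; for $V,W\in\Gamma(TF)$ the surviving terms carry the warping factor, $A(V)=f^{2}g_{F}(V,U_{1})$ when $U_{1}\in\Gamma(TF)$ and $A(V)=0$ when $U_{1}\in\Gamma(TB)$. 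Feeding these into the defining identity, together with $g(X,Y)=g_{B}(X,Y)$ and $g(V,W)=f^{2}g_{F}(V,W)$, produces in each case an explicit expression for the full-space Ricci tensor on the two diagonal blocks.

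Next I would isolate $Ric^{B}$ and $Ric^{F}$. On the base block, Lemma 2.7(1) gives $\overline{Ric}^{M}(X,Y)=Ric^{B}(X,Y)+n_{2}\frac{H^{f}_{B}(X,Y)}{f}$, so subtracting the Hessian term yields the first equation in each of $(26)$--$(28)$ at once. On the fibre block I would rearrange Lemma 2.7(4), solving for $Ric^{F}(V,W)$ and substituting $g(V,W)=f^{2}g_{F}(V,W)$; multiplying through by $f^{2}$ converts $-\frac{\Delta_{B}f}{f}$ and $(n_{2}-1)\frac{|grad_{B}f|_{B}^{2}}{f^{2}}$ into the bracketed factor $f\Delta_{B}f+(1-n_{2})|grad_{B}f|_{B}^{2}+af^{2}$, while the $\nabla_{V}P$ and $\pi(V)\pi(W)$ contributions carry over after the sign flip with coefficients $(1-\overline n)$ and $(\overline n-1)$ respectively.

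The one point requiring care, which I would treat as the main obstacle, is the sign and coefficient bookkeeping when inverting Lemma 2.7(4) for $Ric^{F}$: one must move every metric-valued term to the same side, absorb the factor $f^{2}$ uniformly across the curvature terms, and attach the two generator contributions $bf^{4}g_{F}(V,U_{1})g_{F}(W,U_{1})$ and $cf^{4}g_{F}(V,U_{2})g_{F}(W,U_{2})$ only in the cases where the corresponding $U_{i}$ is tangent to $F$. Once these are tracked correctly, the three displayed systems follow immediately, with no further analytic input beyond Lemma 2.7.
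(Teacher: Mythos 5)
Your proposal is correct and follows essentially the same route as the paper: the paper proves Proposition 4.2 ``using the same method'' as Proposition 4.1, namely writing the generalized quasi-Einstein identity on the base--base and fibre--fibre blocks (with $A(X)=g_{B}(X,U_{1})$ or $0$, $A(V)=f^{2}g_{F}(V,U_{1})$ or $0$, etc., according to where $U_{1},U_{2}$ live) and then inverting Lemma 2.7(1) and 2.7(4) for $Ric^{B}$ and $Ric^{F}$, exactly as you do, with the same sign flips $(\overline n-1)\mapsto(1-\overline n)$ on the $g(W,\nabla_{V}P)$ term and $(1-\overline n)\mapsto(\overline n-1)$ on the $\pi(V)\pi(W)$ term and the same absorption of $f^{2}$ into the bracket $f\Delta_{B}f+(1-n_{2})|grad_{B}f|_{B}^{2}+af^{2}$.
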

By Lemma 2.8, Proposition 4.1 and the definition of scalar curvature, we have:
\begin{Proposition}
Let $M=B \times_{f}F$ be a generalized quasi-Einstein warped product, ${\rm dim}B=n_{1},\;{\rm dim}F=n_{2},\;{\rm dim}M=\overline n=n_{1}+n_{2},\;X,Y\in\Gamma(TB),\;V,W\in\Gamma(TF),$ and $P\in\Gamma(TB).$\\
$1)$ If $U_{1}\in\Gamma(TB),\;U_{2}\in\Gamma(TB),$ then
\begin{equation}
\begin{cases}
\overline{S}^{M}=\overline{n}a+b+c;    \\
\overline{S}^{B}=n_{1}a+n_{2}\frac{\Delta_{B}f}{f}-n_{2}div_{B}P+n_{2}\pi(P)+b+c;\\
\overline{S}^{F}=n_{2}\big[f\Delta_{B}f+(1-n_{2})|grad_{B}f|_{B}^{2}+(1-\overline{n})fPf+af^{2}\big].
\end{cases}
\end{equation}
$2)$ If $U_{1}\in\Gamma(TF),\;U_{2}\in\Gamma(TF),$ then
\begin{equation}
\begin{cases}
\overline{S}^{M}=\overline{n}a+b+c;    \\
\overline{S}^{B}=n_{1}a+n_{2}\frac{\Delta_{B}f}{f}-n_{2}div_{B}P+n_{2}\pi(P);\\
\overline{S}^{F}=n_{2}\big[f\Delta_{B}f+(1-n_{2})|grad_{B}f|_{B}^{2}+(1-\overline{n})fPf+af^{2}\big]+bf^{2}+cf^{2}.
\end{cases}
\end{equation}
$3)$ If $U_{1}\in\Gamma(TB),\;U_{2}\in\Gamma(TF),$ then
\begin{equation}
\begin{cases}
\overline{S}^{M}=\overline{n}a+b+c;    \\
\overline{S}^{B}=n_{1}a+n_{2}\frac{\Delta_{B}f}{f}-n_{2}div_{B}P+n_{2}\pi(P)+b;\\
\overline{S}^{F}=n_{2}\big[f\Delta_{B}f+(1-n_{2})|grad_{B}f|_{B}^{2}+(1-\overline{n})fPf+af^{2}\big]+cf^{2}.
\end{cases}
\end{equation}
\end{Proposition}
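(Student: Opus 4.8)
The plan is to mirror the derivation of Proposition 3.3, handling the three scalars $\overline{S}^M$, $\overline{S}^F$, $\overline{S}^B$ in that order, since $\overline{S}^B$ will be extracted from the other two via Lemma 2.8. Throughout I would use the definition of scalar curvature together with the fact that for any vector $Z$ and an orthonormal frame $\{E_k\}$ one has $\langle Z,Z\rangle=\sum_k\varepsilon_k\langle Z,E_k\rangle^2$.

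First I would compute $\overline{S}^M$ directly from the defining relation $\overline{Ric}^M(X,Y)=ag(X,Y)+bA(X)A(Y)+cB(X)B(Y)$. Tracing over an orthonormal frame of $M$ gives $\overline{S}^M=a\sum_k\varepsilon_k g(E_k,E_k)+b\sum_k\varepsilon_k A(E_k)^2+c\sum_k\varepsilon_k B(E_k)^2$. The first sum equals $\overline{n}$, and since $U_1,U_2$ are unit vectors the remaining sums are $g(U_1,U_1)=1$ and $g(U_2,U_2)=1$. Hence $\overline{S}^M=\overline{n}a+b+c$ in all three cases, independent of where $U_1,U_2$ are tangent.

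Next I would compute $\overline{S}^F$ by tracing the fiber Ricci tensors of Proposition 4.1 over an orthonormal frame of $(F,g_F)$. In every case the metric part $g_F(V,W)[\,\cdots\,]$ contributes $n_2$ times the bracket, since $\sum_k\varepsilon_k g_F(E_k,E_k)=n_2$. The care point is the $b$- and $c$-terms: when $U_1\in\Gamma(TF)$ the unit condition is taken in $g_M=g_B\oplus f^2g_F$, so $f^2g_F(U_1,U_1)=1$, i.e. $g_F(U_1,U_1)=f^{-2}$, and then $\sum_k\varepsilon_k f^4 g_F(E_k,U_1)^2=f^4 g_F(U_1,U_1)=f^2$. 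This produces the extra $bf^2$ (resp. $cf^2$) exactly when $U_1$ (resp. $U_2$) is fiber-tangent, reproducing the three stated forms of $\overline{S}^F$.

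Finally I would obtain $\overline{S}^B$ from Lemma 2.8. Since $P\in\Gamma(TB)$, the induced connection on the fiber is metric (for $V,W\in\Gamma(TF)$ one has $\pi(W)=g(W,P)=0$), so the fiber scalar curvature $S^F$ appearing there coincides with $\overline{S}^F$, and Lemma 2.8 becomes a linear relation among $\overline{S}^M$, $\overline{S}^B$, $\overline{S}^F$ and the warping terms. Solving for $\overline{S}^B$ and substituting the values just found, the $|grad_{B}f|_{B}^{2}/f^{2}$ and $Pf/f$ contributions cancel because $1-n_2=-(n_2-1)$ and $1-\overline{n}=-(\overline{n}-1)$, the $a$-terms collapse to $n_1a$ via $\overline{n}-n_2=n_1$, and a single $n_2\Delta_{B}f/f$ survives. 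The only case-dependent effect comes through $\overline{S}^F/f^2$: the extra $bf^2+cf^2$ (case 2) or $cf^2$ (case 3) inside $\overline{S}^F$ divides by $f^2$ and subtracts off, which removes $b+c$ (resp. $c$) relative to case 1. I expect the substitution-and-cancellation in Lemma 2.8 to be the main bookkeeping obstacle, while the genuinely non-routine point is the normalization $g_F(U_i,U_i)=f^{-2}$ for fiber-tangent unit vectors — keeping the powers of $f$ consistent there is what makes the $b,c$ terms land correctly in both $\overline{S}^F$ and $\overline{S}^B$.
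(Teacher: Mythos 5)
Your proposal is correct and follows exactly the route the paper intends: the paper gives no written proof beyond the phrase ``By Lemma 2.8, Proposition 4.1 and the definition of scalar curvature,'' and your computation is precisely that---trace the generalized quasi-Einstein condition for $\overline{S}^{M}$, trace the fibre equations of Proposition 4.1 for $\overline{S}^{F}$, and solve the formula of Lemma 2.8 for $\overline{S}^{B}$. Your two ``care points'' are also the right ones: the normalization $g_{F}(U_{i},U_{i})=f^{-2}$ for fibre-tangent unit vectors is what produces the $bf^{2}$, $cf^{2}$ terms (consistent with the paper's Propositions 3.1--3.3), and since $\pi$ vanishes on $\Gamma(TF)$ when $P\in\Gamma(TB)$, the $S^{F}$ in Lemma 2.8 indeed equals the $\overline{S}^{F}$ of the statement, after which the cancellations you describe yield the three displayed formulas.
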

Similarly we have:
\begin{Proposition}
Let $M=B \times_{f}F$ be a generalized quasi-Einstein warped product, ${\rm dim}B=n_{1},\;{\rm dim}F=n_{2},\;{\rm dim}M=\overline n=n_{1}+n_{2},\;X,Y\in\Gamma(TB),\;V,W\in\Gamma(TF),$ and $P\in\Gamma(TF).$\\
$1)$ If $U_{1}\in\Gamma(TB),\;U_{2}\in\Gamma(TB),$ then
\begin{equation}
\begin{cases}
\overline{S}^{M}=\overline{n}a+b+c;    \\
S^{B}=n_{1}a+n_{2}\frac{\Delta_{B}f}{f}+b+c;\\
S^{F}=n_{2}\big[f\Delta_{B}f+(1-n_{2})|grad_{B}f|_{B}^{2}+af^{2}\big]+(1-\overline{n})div_{F}P
+(\overline{n}-1)\pi(P).
\end{cases}
\end{equation}
$2)$ If $U_{1}\in\Gamma(TF),\;U_{2}\in\Gamma(TF),$ then
\begin{equation}
\begin{cases}
\overline{S}^{M}=\overline{n}a+b+c;    \\
S^{B}=n_{1}a+n_{2}\frac{\Delta_{B}f}{f};\\
S^{F}=n_{2}\big[f\Delta_{B}f+(1-n_{2})|grad_{B}f|_{B}^{2}+af^{2}\big]+(1-\overline{n})div_{F}P
+(\overline{n}-1)\pi(P)+bf^{2}+cf^{2}.
\end{cases}
\end{equation}
$3)$ If $U_{1}\in\Gamma(TB),\;U_{2}\in\Gamma(TF),$ then
\begin{equation}
\begin{cases}
\overline{S}^{M}=\overline{n}a+b+c;    \\
S^{B}=n_{1}a+n_{2}\frac{\Delta_{B}f}{f}+b;\\
S^{F}=n_{2}\big[f\Delta_{B}f+(1-n_{2})|grad_{B}f|_{B}^{2}+af^{2}\big]+(1-\overline{n})div_{F}P
+(\overline{n}-1)\pi(P)+cf^{2}.
\end{cases}
\end{equation}
\end{Proposition}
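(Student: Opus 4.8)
The plan is to obtain the three scalar curvatures purely by tracing the Ricci identities already in hand, mirroring the proof of Proposition 4.3 but now invoking Proposition 4.2, since here $P\in\Gamma(TF)$. I fix a local orthonormal frame $\{E_k\}$ adapted to the product, so that it splits into a $g_B$-orthonormal frame of $B$ and a $g_F$-orthonormal frame of $F$, and I record two elementary facts I will use repeatedly: the completeness relation $\sum_k \varepsilon_k g(E_k,Z)^2 = g(Z,Z)$ for any vector $Z$, and $Tr(H^f_B) = -\Delta_B f$.

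First I would compute $\overline{S}^M$ directly from the generalized quasi-Einstein condition. Tracing $\overline{Ric}(X,Y)=ag(X,Y)+bA(X)A(Y)+cB(X)B(Y)$ against the frame gives $\overline{S}^M = \overline{n}\,a + b\,g(U_1,U_1) + c\,g(U_2,U_2)$, and since $U_1,U_2$ are unit vectors this is $\overline{n}a+b+c$. This value is identical in all three sub-cases, because the trace does not detect which factor the generators lie in.

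Next I would read off $S^B$ by tracing the base equation of Proposition 4.2 over $B$. The term $ag_B$ contributes $n_1 a$, the Hessian term contributes $n_2\,\Delta_B f/f$ via $Tr(H^f_B)=-\Delta_B f$, and each quadratic generator term $g_B(\cdot,U_j)^2$ traces to $g_B(U_j,U_j)$, which equals $1$ when $U_j\in\Gamma(TB)$ and $0$ when $U_j\in\Gamma(TF)$. This is precisely what produces the extra $+b+c$ in sub-case $1$, nothing extra in sub-case $2$, and $+b$ in sub-case $3$. No warping factor intervenes here, since $g_B=g_M|_B$ and generators tangent to $B$ are already $g_B$-unit.

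Finally I would obtain $S^F$ by tracing the fiber equation of Proposition 4.2 over $F$. The bracketed scalar, being multiplied by $g_F(V,W)$, contributes $n_2$ times itself; the $g(W,\nabla_V P)$ and $\pi(V)\pi(W)$ terms produce the $div_F P$ and $\pi(P)$ contributions; and each generator term $cf^4 g_F(\cdot,U_j)^2$ traces to $cf^4\,g_F(U_j,U_j)$. The main obstacle, and the one place where genuine care is required, is the warping factor relating $g_M$ to $g_F$: a vector $U_j\in\Gamma(TF)$ that is a unit vector for $g_M=g_B\oplus f^2 g_F$ satisfies $g_F(U_j,U_j)=f^{-2}$, so $cf^4\,g_F(U_j,U_j)=cf^2$, which is exactly the $+bf^2$ / $+cf^2$ that appears when a generator is tangent to the fiber. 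The same $g_M$-versus-$g_F$ normalization must be tracked consistently through the $div_F P$ and $\pi(P)$ terms, keeping every factor of $f$ straight; assembling these pieces yields the stated $S^F$ in each of the three sub-cases.
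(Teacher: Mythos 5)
Your overall strategy --- trace the generalized quasi-Einstein identity to get $\overline{S}^{M}$, trace the base equation of Proposition 4.2 to get $S^{B}$, and trace the fibre equation of Proposition 4.2 to get $S^{F}$ --- is essentially the paper's own route (its proof here is the one-line ``Similarly we have'', meaning: by Lemma 2.9, Proposition 4.2 and the definition of scalar curvature). Your computations of $\overline{S}^{M}=\overline{n}a+b+c$ and of $S^{B}$, including the sign convention $Tr(H^{f}_{B})=-\Delta_{B}f$ and the dichotomy $g_{B}(U_{j},U_{j})=1$ or $0$ according to which factor $U_{j}$ lives in, are correct, as is the conversion $bf^{4}g_{F}(U_{j},U_{j})=bf^{2}$ for a $g_{M}$-unit fibre generator.

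The gap sits exactly where you defer: ``the same $g_{M}$-versus-$g_{F}$ normalization must be tracked consistently through the $div_{F}P$ and $\pi(P)$ terms \dots assembling these pieces yields the stated $S^{F}$.'' Carry that bookkeeping out and it does \emph{not} yield the stated formula. With a $g_{F}$-orthonormal frame $\{e_{j}\}$ of $F$ one has $\pi(e_{j})=g(e_{j},P)=f^{2}g_{F}(e_{j},P)$, hence $\sum_{j}\pi(e_{j})^{2}=f^{4}g_{F}(P,P)=f^{2}\pi(P)$; and since the horizontal correction in $\nabla_{e_{j}}P$ is $g$-orthogonal to the fibre, $\sum_{j}g(e_{j},\nabla_{e_{j}}P)=f^{2}\sum_{j}g_{F}(e_{j},\nabla^{F}_{e_{j}}P)=f^{2}\,div_{F}P$. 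So tracing the second equation of $(26)$--$(28)$ produces $(1-\overline{n})f^{2}div_{F}P+(\overline{n}-1)f^{2}\pi(P)$, not the bare $(1-\overline{n})div_{F}P+(\overline{n}-1)\pi(P)$ appearing in the proposition. The alternative derivation through Lemma 2.9 gives the same $f^{2}$ factors once you clear the $S^{F}/f^{2}$: the factors cancel inside Lemma 2.9 itself only because there the fibre frame is the $g_{M}$-orthonormal $\{e_{j}/f\}$, whose overall $1/f^{2}$ absorbs them, and no such cancellation occurs when you solve for $S^{F}$. Thus your outline, executed faithfully at its single delicate step, proves the statement only with $f^{2}\,div_{F}P$ and $f^{2}\,\pi(P)$ in place of $div_{F}P$ and $\pi(P)$; the claimed match with the printed formula is asserted, not established, and under the paper's own definitions (the same ones that make Lemma 2.9 consistent with Lemma 2.7) it fails. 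To close the argument you would have to either display a convention for $div_{F}P$ and $\pi(P)$ that removes the $f^{2}$ --- none compatible with Lemma 2.9 exists --- or record the $f^{2}$ factors explicitly and flag the discrepancy with the statement.
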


\subsection{\large Obstructions to the existence of generalized quasi-Einstein warped products with a semi-symmetric non-metric connection}
In this section, we prove some obstructions to the existence of generalized quasi-Einstein warped products with a semi-symmetric non-metric connection. We consider the following six cases:
\subsubsection{\normalsize{\bfseries When \boldmath $P\in\Gamma(TB),\;U_{1}\in\Gamma(TB),\;U_{2}\in\Gamma(TB).$}}
In the second equation of $(23),$ let:
$$f\Delta_{B}f+(1-n_{2})|grad_{B}f|_{B}^{2}+(1-\overline{n})fPf+af^{2}\triangleq \alpha_{3}.   \eqno{(35)} $$
Then the equation $(23)$ becomes:
\begin{equation}
\begin{cases}\setcounter{equation}{36}
\overline{{Ric}}^{B}(X,Y)=ag_{B}(X,Y)-n_{2}\Big[\frac{H^f_{B}(X,Y)}{f}+g(Y,\nabla_{X}P)-\pi(X)\pi(Y)\Big]
\\ \mbox{}\quad\quad\quad\quad\quad\quad+bg_{B}(X,U_{1})g_{B}(Y,U_{1})+cg_{B}(X,U_{2})g_{B}(Y,U_{2}),   \\
\overline{Ric}^{F}(V,W)=\alpha_{3} g_{F}(V,W),  \\
f\Delta_{B}f+(1-n_{2})|grad_{B}f|_{B}^{2}+(1-\overline{n})fPf+af^{2}=\alpha_{3}.
\end{cases}
\end{equation}
\begin{Theorem}
Let $M=B \times_{f}F$ be a generalized quasi-Einstein warped product with $B$ compact and connected, ${\rm dim}B=n_{1}\geqslant1,\;{\rm dim}F=n_{2}\geqslant2,\;{\rm dim}M=\overline n=n_{1}+n_{2},\;P\in\Gamma(TB)$ and $U_{1}\in\Gamma(TB),\;U_{2}\in\Gamma(TB).$
If $n_{1}=1,\;div_{B}P=c_{1}$ and $\pi(P)=c_{2}$ are both constants, then $M$ is a sample Riemannian product.
\end{Theorem}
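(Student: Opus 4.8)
The plan is to imitate the proof of Theorem 3.1 almost verbatim; the only structural difference is the presence of a second generator $U_{2}$, which contributes one extra additive constant $c$ to the scalar curvature of the base but does not otherwise change the mechanism. Since $P\in\Gamma(TB)$ and $U_{1},U_{2}\in\Gamma(TB)$, we are in case $1)$ of Proposition 4.3, so the relevant formula is the second equation of $(29)$.

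First I would use the hypothesis $n_{1}=1$: a one-dimensional Riemannian manifold is scalar-flat, hence $\overline{S}^{B}=0$. Substituting $n_{1}=1$, $div_{B}P=c_{1}$ and $\pi(P)=c_{2}$ into the second equation of $(29)$ and setting its left-hand side equal to zero gives
$$0=a+n_{2}\frac{\Delta_{B}f}{f}-n_{2}c_{1}+n_{2}c_{2}+b+c,$$
which I solve for the ratio $\Delta_{B}f/f$ to obtain
$$\frac{\Delta_{B}f}{f}=c_{1}-c_{2}-\frac{a+b+c}{n_{2}}\triangleq c_{0},$$
a constant; consequently $\Delta_{B}f=c_{0}f$.

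The concluding step is the standard compactness argument, and it is the only place where the assumptions that $B$ is compact and connected and that $f>0$ are used. Because $f$ is strictly positive, $\Delta_{B}f=c_{0}f$ has the constant sign of $c_{0}$ throughout $B$; integrating over the closed manifold $B$ yields $\int_{B}\Delta_{B}f\,dV_{B}=0$, whence $c_{0}\int_{B}f\,dV_{B}=0$, and since $\int_{B}f\,dV_{B}>0$ this forces $c_{0}=0$. Thus $\Delta_{B}f=0$, so $f$ is a harmonic function on a compact connected manifold and must be constant, which means $M$ is a simple Riemannian product. I do not anticipate any genuine obstacle: the algebra is a single substitution into $(29)$ and the analytic input is the elementary fact that the Laplacian of a function integrates to zero on a closed manifold. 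The one point worth verifying is that the extra generator $U_{2}$ enters $\overline{S}^{B}$ precisely as the additive constant $c$ (which it does, by case $1)$ of Proposition 4.3), so that the numerator $a+b$ of Theorem 3.1 is merely replaced by $a+b+c$ and the rest of that argument transfers unchanged.
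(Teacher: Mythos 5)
Your proposal is correct and follows the paper's proof essentially verbatim: you substitute $n_{1}=1$, $div_{B}P=c_{1}$, $\pi(P)=c_{2}$ into the second equation of $(29)$, use $\overline{S}^{B}=0$ to get $\Delta_{B}f=c_{0}f$, and conclude by the constant-sign-of-the-Laplacian argument on the compact connected base. The only difference is cosmetic: you spell out the integration step $\int_{B}\Delta_{B}f\,dV_{B}=0$ that the paper leaves implicit in the phrase ``the Laplacian has constant sign and hence $f$ is constant.''
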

\begin{proof}
If $n_{1}=1,$ then $\overline{S}^{B}=0,$ by the second equation of $(29)$ and $div_{B}P=c_{1},\;\pi(P)=c_{2}$
we get
$$0=a+n_{2}\frac{\Delta_{B}f}{f}-n_{2}c_{1}+n_{2}c_{2}+b+c\Rightarrow \frac{\Delta_{B}f}{f}=c_{1}-c_{2}-\frac{a+b+c}{n_{2}}\triangleq c_{0}\Rightarrow \Delta_{B}f=c_{0}f.$$
Then the Laplacian has constant sign and hence $f$ is constant.
\end{proof}

\begin{Theorem}
Let $M=B \times_{f}F$ be a generalized quasi-Einstein warped product with $B$ compact and connected, ${\rm dim}B=n_{1}\geqslant2,\;{\rm dim}F=n_{2}\geqslant2,\;{\rm dim}M=\overline n=n_{1}+n_{2},\;P\in\Gamma(TB)$ and $U_{1}\in\Gamma(TB),\;U_{2}\in\Gamma(TB).$
If $a\geqslant0$ and $Pf\geqslant0,$ then $M$ is a sample Riemannian product.
\end{Theorem}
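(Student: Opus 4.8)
The plan is to adapt the argument of Theorem 3.2 essentially verbatim; the key observation is that, in the present configuration, the fiber equation has exactly the same shape. Since $U_{1},U_{2}\in\Gamma(TB)$, for every $V\in\Gamma(TF)$ one has $g(V,U_{1})=g(V,U_{2})=0$, so the $b$- and $c$-terms drop out of $\overline{Ric}^{F}$ and the second equation of $(23)$ collapses to $\overline{Ric}^{F}(V,W)=\alpha_{3}\,g_{F}(V,W)$, with $\alpha_{3}$ as in $(35)$. As $\overline{Ric}^{F}$ and $g_{F}$ are quantities on the fiber $F$ that do not depend on the point of $B$, this identity forces $\alpha_{3}$ to be constant on $B$, which is what makes the subsequent evaluation at a single base point legitimate.

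First I would use the compactness of $B$ to choose a point $z\in B$ at which $f$ attains its maximum. There $grad_{B}f(z)=0$, and since $\Delta_{B}f=-Tr(H^{f})$ with $H^{f}$ negative semidefinite at a maximum, $\Delta_{B}f(z)\geqslant0$. Moreover $Pf(z)=g_{B}(grad_{B}f(z),P)=0$, because $P\in\Gamma(TB)$. Evaluating $(35)$ at $z$ therefore gives $\alpha_{3}=f(z)\Delta_{B}f(z)+af^{2}(z)$.

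Next I would subtract this from $(35)$ at an arbitrary point, obtaining the identity
$$f\Delta_{B}f=f(z)\Delta_{B}f(z)+(n_{2}-1)|grad_{B}f|_{B}^{2}+(\overline{n}-1)fPf+a\big[f^{2}(z)-f^{2}\big].$$
Each summand on the right is nonnegative: $f(z)\Delta_{B}f(z)\geqslant0$ since $f>0$ and $\Delta_{B}f(z)\geqslant0$; $(n_{2}-1)|grad_{B}f|_{B}^{2}\geqslant0$ because $n_{2}\geqslant2$; $(\overline{n}-1)fPf\geqslant0$ from $f>0$ and the hypothesis $Pf\geqslant0$; and $a[f^{2}(z)-f^{2}]\geqslant0$ from $a\geqslant0$ and the maximality of $f(z)$. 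Hence $f\Delta_{B}f\geqslant0$, and since $f>0$ this yields $\Delta_{B}f\geqslant0$ throughout $B$.

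Finally, on a compact connected manifold the divergence theorem gives $\int_{B}\Delta_{B}f\,dV_{B}=0$, so $\Delta_{B}f\geqslant0$ forces $\Delta_{B}f\equiv0$; then $f$ is harmonic and hence constant, and $M$ is a simple Riemannian product. The hard part is not computational but structural: one must first secure that $\alpha_{3}$ is a constant, and then verify that, under the hypotheses $a\geqslant0$ and $Pf\geqslant0$ together with $n_{2}\geqslant2$, all four terms in the displayed identity carry the same sign. Everything else is the standard maximum-principle argument on a compact base.
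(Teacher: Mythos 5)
Your proof is correct and follows essentially the same route as the paper's: evaluate $(35)$ at a maximum point $z$ of $f$ on the compact base, subtract to get $f\Delta_{B}f=f(z)\Delta_{B}f(z)+(n_{2}-1)|grad_{B}f|_{B}^{2}+(\overline{n}-1)fPf+a\big[f^{2}(z)-f^{2}\big]\geqslant0$, and conclude that the Laplacian has constant sign on compact $B$, so $f$ is constant. The only difference is that you make explicit two steps the paper leaves implicit --- that $\alpha_{3}$ is constant (since $\overline{Ric}^{F}$ and $g_{F}$ live on $F$ while the bracket in $(35)$ lives on $B$) and the integration argument $\int_{B}\Delta_{B}f\,dV_{B}=0$ --- which is a welcome tightening, not a different method.
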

\begin{proof}
Let $z\in B$ such that $f(z)$ is the maximum of $f$ on $B.$ Then $grad_{B}f(z)=0$ and $\Delta_{B}f(z)\geqslant0.$
So $Pf(z)=g(grad_{B}f(z),P)=0.$
Writing the equation $(35)$ in the point $z$ we obtain:
$$f(z)\Delta_{B}f(z)+af^{2}(z)=\alpha_{3}.   \eqno{(37)}$$
By equations $(35)$ and $(37),$ we have:
$$ f(z)\Delta_{B}f(z)+af^{2}(z)=f\Delta_{B}f+(1-n_{2})|grad_{B}f|_{B}^{2}+(1-\overline{n})fPf+af^{2}$$
$$ \Rightarrow f\Delta_{B}f=f(z)\Delta_{B}f(z)+(n_{2}-1)|grad_{B}f|_{B}^{2}+(\overline{n}-1)fPf+a\big[f^{2}(z)-f^{2}\big]\geqslant0$$
Then the Laplacian has constant sign and hence $f$ is constant.
\end{proof}

\begin{Theorem}
Let $M=B \times_{f}F$ be a generalized quasi-Einstein warped product with $B$ compact and connected, ${\rm dim}B=n_{1}\geqslant2,\;{\rm dim}F=n_{2}\geqslant2,\;{\rm dim}M=\overline n=n_{1}+n_{2},\;P\in\Gamma(TB)$ and $U_{1}\in\Gamma(TB),\;U_{2}\in\Gamma(TB).$
If $a<0,\;Pf\geqslant0,$ and $\overline{S}^{F}\geqslant0$, then $M$ is a sample Riemannian product.
\end{Theorem}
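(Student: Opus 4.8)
The plan is to follow the pattern of the proof of Theorem 3.6, since in the present case the generalized quasi-Einstein condition introduces no new fibre-direction terms. Indeed, with $U_{1},U_{2}\in\Gamma(TB)$ we have $g(V,U_{1})=g(V,U_{2})=0$ for every $V\in\Gamma(TF)$, so the defining identity $\overline{Ric}^{M}(V,W)=ag(V,W)+bA(V)A(W)+cB(V)B(W)$ reduces to $af^{2}g_{F}(V,W)$, exactly as in the Einstein case; this is why the fibre Ricci and fibre scalar curvatures in Proposition 4.3(1) carry no $b$ or $c$ contribution, these scalars surfacing only in $\overline{S}^{B}$. Confirming this absence of fibre terms is the one point worth checking before the estimates begin.

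First I would combine the third equation of the block $(29)$ with the abbreviation $(35)$ to record the identity
$$\overline{S}^{F}=n_{2}\big[f\Delta_{B}f+(1-n_{2})|grad_{B}f|_{B}^{2}+(1-\overline{n})fPf+af^{2}\big]=n_{2}\alpha_{3}.$$
Since $n_{2}\geqslant2$ and $\overline{S}^{F}\geqslant0$ by hypothesis, this forces $\alpha_{3}\geqslant0$. Next I would rearrange $(35)$ so as to isolate the quantity $f\Delta_{B}f+af^{2}$, obtaining
$$f\Delta_{B}f+af^{2}=\alpha_{3}+(n_{2}-1)|grad_{B}f|_{B}^{2}+(\overline{n}-1)fPf.$$
Every summand on the right is nonnegative: $\alpha_{3}\geqslant0$ from the previous step, $(n_{2}-1)\geqslant1$ multiplies $|grad_{B}f|_{B}^{2}\geqslant0$, and $(\overline{n}-1)>0$ together with $f>0$ and $Pf\geqslant0$ makes the last term nonnegative. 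Hence $f\Delta_{B}f+af^{2}\geqslant0$, and dividing by $f>0$ yields $\Delta_{B}f\geqslant-af$. As $a<0$ and $f>0$, the right-hand side is positive, so $\Delta_{B}f\geqslant0$ throughout $B$.

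Finally, since $B$ is compact and connected, a smooth function whose Laplacian keeps a constant sign must be constant: integrating $\Delta_{B}f$ over $B$ gives zero, so $\Delta_{B}f\geqslant0$ forces $\Delta_{B}f\equiv0$, whence $f$ is harmonic and therefore constant. Thus $f$ is constant and $M=B\times_{f}F$ is a simple Riemannian product. I do not expect a genuine obstacle here; the chain of inequalities is identical to that of Theorem 3.6, and the only new step is the bookkeeping in the first paragraph confirming that the two generators lying in $\Gamma(TB)$ leave the fibre scalar curvature $\overline{S}^{F}=n_{2}\alpha_{3}$ untouched.
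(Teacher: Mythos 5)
Your proposal is correct and follows essentially the same route as the paper's own proof: both derive $\overline{S}^{F}=n_{2}\alpha_{3}\geqslant0$ from the third equation of $(29)$ and $(35)$, rearrange $(35)$ to get $f\Delta_{B}f+af^{2}=\alpha_{3}+(n_{2}-1)|grad_{B}f|_{B}^{2}+(\overline{n}-1)fPf\geqslant0$, and conclude $\Delta_{B}f\geqslant-af>0$, forcing $f$ constant. Your two additions --- verifying that $U_{1},U_{2}\in\Gamma(TB)$ contribute no fibre terms, and spelling out that $\int_{B}\Delta_{B}f=0$ on compact $B$ makes a sign-definite Laplacian vanish --- merely make explicit what the paper leaves implicit.
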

\begin{proof}
From the third equation of $(29)$ and the equation $(35),$ we get $\overline{S}^{F}=n_{2}\alpha_{3}.$ Since $\overline{S}^{F}\geqslant0,$ it follows
that $\alpha_{3}\geqslant0.$ Then the equation $(35)$ becomes:
$$f\Delta_{B}f+af^{2}=\alpha_{3}+(n_{2}-1)|grad_{B}f|_{B}^{2}+(\overline{n}-1)fPf\geqslant0$$
$$\Rightarrow f\Delta_{B}f+af^{2}\geqslant0 \Rightarrow\Delta_{B}f\geqslant-af>0.$$
So $f$ is constant.
\end{proof}
\subsubsection{\normalsize{\bfseries When \boldmath $P\in\Gamma(TB),\;U_{1}\in\Gamma(TF),\;U_{2}\in\Gamma(TF).$}}
\begin{Theorem}
Let $M=B \times_{f}F$ be a generalized quasi-Einstein warped product with $B$ compact and connected, ${\rm dim}B=n_{1}\geqslant1,\;{\rm dim}F=n_{2}\geqslant2,\;{\rm dim}M=\overline n=n_{1}+n_{2},\;P\in\Gamma(TB)$ and $U_{1}\in\Gamma(TF),\;U_{2}\in\Gamma(TF).$
If $n_{1}=1,\;div_{B}P=c_{1}$ and $\pi(P)=c_{2}$ are both constants, then $M$ is a sample Riemannian product.
\end{Theorem}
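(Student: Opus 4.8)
The plan is to follow the same strategy used for the preceding $n_{1}=1$ obstructions (Theorems 3.5, 3.8 and 4.5), since the placement $P\in\Gamma(TB)$, $U_{1},U_{2}\in\Gamma(TF)$ is governed by the scalar-curvature identity $(30)$. First I would record that when $\dim B=1$ the base scalar curvature with respect to the semi-symmetric non-metric connection vanishes, i.e. $\overline{S}^{B}=0$. This is because a one-dimensional base carries a single orthonormal frame field $E$ with $g_{B}(E,E)=\varepsilon$; feeding $X=Y=E$ into the curvature identity $(4)$ written for $B$ (where $R^{B}=0$ by dimension) makes the two terms $g_{B}(E,\nabla^{B}_{E}P)\varepsilon$ cancel against each other, and likewise the $\pi(E)$-terms cancel, so that $\overline{Ric}^{B}(E,E)=0$ and hence $\overline{S}^{B}=0$.

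Next I would substitute the hypotheses into $(30)$. The second line of $(30)$ reads $\overline{S}^{B}=n_{1}a+n_{2}\frac{\Delta_{B}f}{f}-n_{2}\,div_{B}P+n_{2}\pi(P)$; setting $n_{1}=1$, $div_{B}P=c_{1}$, $\pi(P)=c_{2}$ and $\overline{S}^{B}=0$ gives $0=a+n_{2}\frac{\Delta_{B}f}{f}-n_{2}c_{1}+n_{2}c_{2}$, which solves to $\frac{\Delta_{B}f}{f}=c_{1}-c_{2}-\frac{a}{n_{2}}\triangleq c_{0}$, a constant, equivalently $\Delta_{B}f=c_{0}f$. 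Note that because both generators lie in $F$, the $b$- and $c$-terms contribute nothing to $\overline{S}^{B}$, which is why the resulting $c_{0}$ here matches the quasi-Einstein value of Theorem 3.8 rather than carrying extra $b,c$ corrections.

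Finally I would close using the compactness of $B$. Since $f>0$, the sign of $\Delta_{B}f=c_{0}f$ is everywhere that of $c_{0}$. Evaluating at a point where $f$ attains its maximum (there $\Delta_{B}f\geqslant0$ under the convention $\Delta f=-Tr(H^{f})$) and at a point where $f$ attains its minimum (there $\Delta_{B}f\leqslant0$) forces $c_{0}\geqslant0$ and $c_{0}\leqslant0$ simultaneously, so $c_{0}=0$ and $\Delta_{B}f=0$. A harmonic function on a compact connected manifold is constant, so $f$ is constant and $M$ is a simply Riemannian product.

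The genuinely new content is negligible: the argument is structurally identical to the three preceding $n_{1}=1$ theorems. The only point requiring care is selecting the correct scalar-curvature identity for this exact distribution of $P$, $U_{1}$, $U_{2}$ (namely $(30)$, where both generator directions lie in $F$ and therefore do not affect $\overline{S}^{B}$), together with tracking the sign convention $\Delta f=-Tr(H^{f})$ so that the maximum-principle step yields $c_{0}=0$ rather than the opposite inequality. In effect there is no hard step.
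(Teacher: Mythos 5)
Your proposal is correct and takes essentially the same route as the paper's own proof: substitute $n_{1}=1$ (hence $\overline{S}^{B}=0$) together with $div_{B}P=c_{1}$, $\pi(P)=c_{2}$ into the second equation of $(30)$ to obtain $\Delta_{B}f=c_{0}f$ with $c_{0}=c_{1}-c_{2}-\frac{a}{n_{2}}$, and then conclude that $f$ is constant because a Laplacian of constant sign on a compact connected manifold must vanish. The only additions — your frame computation verifying $\overline{S}^{B}=0$ in dimension one and the maximum/minimum argument forcing $c_{0}=0$ — merely fill in steps the paper leaves implicit, and both are carried out correctly under the paper's sign convention $\Delta f=-Tr(H^{f})$.
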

\begin{proof}
If $n_{1}=1,$ then $\overline{S}^{B}=0,$ by the second equation of $(30)$ and $div_{B}P=c_{1},\;\pi(P)=c_{2}$
we can get
$$0=a+n_{2}\frac{\Delta_{B}f}{f}-n_{2}c_{1}+n_{2}c_{2}\Rightarrow \frac{\Delta_{B}f}{f}=c_{1}-c_{2}-\frac{a}{n_{2}}\triangleq c_{0}\Rightarrow \Delta_{B}f=c_{0}f.$$
Then the Laplacian has constant sign and hence $f$ is constant.
\end{proof}
\begin{Theorem}
Let $M=B \times_{f}F$ be a generalized quasi-Einstein warped product, ${\rm dim}B=n_{1}\geqslant2,\;{\rm dim}F=n_{2}\geqslant2,\;{\rm dim}M=\overline n=n_{1}+n_{2},\;P\in\Gamma(TB)$ and $U_{1}\in\Gamma(TF),\;U_{2}\in\Gamma(TF).$ If $b\neq0$ or $c\neq0,$ then $M$ is a sample Riemannian product.
\end{Theorem}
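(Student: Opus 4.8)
The plan is to imitate the argument used for Theorem 3.5, working from the second equation of $(24)$, since among the three identities collected in Proposition 4.1 (case $2$) it is the only one that can constrain the warping function once $V,W\in\Gamma(TF)$. The structural observation that makes the argument run is this: because $P\in\Gamma(TB)$, we have $\pi(V)=g(V,P)=0$ for every $V\in\Gamma(TF)$, so $\overline\nabla$ restricts to the Levi-Civita connection of $(F,g_F)$ along the fibres and $\overline{Ric}^{F}$ is a genuine tensor on $F$ alone; likewise $g_F$, $U_1$ and $U_2$ are objects living on $F$, whereas $f$ and the bracketed scalar $\Phi:=f\Delta_{B}f+(1-n_{2})|grad_{B}f|_{B}^{2}+(1-\overline{n})fPf+af^{2}$ depend only on the point of $B$. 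Thus the second equation of $(24)$ equates a $B$-independent tensor on $F$ with an expression whose coefficients carry an explicit dependence on $B$ through $f$ and $\Phi$.

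Without loss of generality I would assume $b\neq0$; if instead $b=0$ and $c\neq0$, the pairs $(U_1,b)$ and $(U_2,c)$ play symmetric roles and the same argument applies after interchanging them. I would then select, at a point of $F$, two vector fields $V,W$ tangent to $F$ that are $g_M$-orthogonal (hence $g_F(V,W)=0$, annihilating the $\Phi\,g_F(V,W)$ term) and that pair nontrivially with $U_1$ while being orthogonal to $U_2$, that is $g_M(V,U_1)\neq0$, $g_M(W,U_1)\neq0$ and $g_M(V,U_2)=0$. Since $V,W,U_1,U_2$ are all tangent to $F$, these conditions are equivalent to the corresponding ones for $g_F$ (because $g_M=f^{2}g_F$ there and $f>0$). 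For such a choice the second equation of $(24)$ collapses to
$$\overline{Ric}^{F}(V,W)=bf^{4}\,g_{F}(V,U_{1})\,g_{F}(W,U_{1}).$$
The left-hand side is independent of the base point, while the right-hand side is $f^{4}$ times a function on $F$ that is not identically zero; comparing the two at points of $F$ where $g_{F}(V,U_{1})g_{F}(W,U_{1})\neq0$ forces $f^{4}$, and hence $f$, to be constant, so $M$ is a simply Riemannian product.

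The step I expect to be the main obstacle is producing the separating pair $V,W$, that is, exhibiting two $g_F$-orthogonal tangent vectors inside $U_2^{\perp}$ that are not $g_F$-orthogonal to $U_1$. Since $U_1\in U_2^{\perp}$ and $\dim U_2^{\perp}=n_{2}-1$, this is immediate as soon as $n_{2}\geqslant3$ (for instance $V=U_1+e$, $W=U_1-e$ with $e$ a unit vector orthogonal to both $U_1$ and $U_2$). The borderline case $n_{2}=2$ requires separate handling, because there $U_2^{\perp}$ is spanned by $U_1$ and cannot contain an orthogonal pair; in that situation I would keep both rank-one terms and exploit $b\neq c$ by choosing $V,W$ with $g_F(V,W)=0$ and both $g_F(\cdot,U_1)$, $g_F(\cdot,U_2)$ nonzero, for which the two terms combine to a nonzero multiple of $f^{2}$, while the remaining degenerate possibility $b=c$ would have to be examined on its own via the resulting Einstein-type relation on the two-dimensional fibre.
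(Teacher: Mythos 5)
Your proposal follows the same route as the paper's own proof: both work from the second equation of $(24)$, choose orthogonal $V,W$ tangent to $F$ pairing nontrivially with $U_{1}$ (or $U_{2}$), reduce to $\overline{Ric}^{F}(V,W)=bf^{4}g_{F}(V,U_{1})g_{F}(W,U_{1})$ (the paper's equation $(38)$), and conclude by the separation-of-variables argument (``different domains of definition''). The genuine difference is that you are careful precisely where the published proof is silently incomplete: equation $(24)$ contains \emph{both} rank-one terms, so to arrive at $(38)$ one must also annihilate $cf^{4}g_{F}(V,U_{2})g_{F}(W,U_{2})$, and the paper never imposes the required condition $g_{F}(V,U_{2})=0$ or $g_{F}(W,U_{2})=0$, while you do. Your further observation that a triple ($V,W$ orthogonal, both non-orthogonal to $U_{1}$, with $V\perp U_{2}$) exists only when $n_{2}\geqslant3$ is correct, and your fallback for $n_{2}=2$ with $b\neq c$ is sound: there orthogonality of $V,W$ in the two-dimensional fibre forces the two rank-one terms to combine to a nonzero multiple $(b-c)g_{F}(V,U_{1})g_{F}(W,U_{1})$ times a positive power of $f$, and any positive power of $f$ being constant suffices. (Your wavering between $f^{4}$ and $f^{2}$ reflects only whether the $g_{M}$-unit fields $U_{i}$ are rescaled to live on $F$; it is harmless for the conclusion.)

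The one point neither you nor the paper closes is $n_{2}=2$ with $b=c\neq0$: then $U_{2}^{\perp}\cap TF$ is spanned by $U_{1}$, the two rank-one terms sum to a multiple of $g_{F}(V,W)$ for \emph{every} choice of $V,W$, and $(24)$ degenerates to an Einstein-type relation $\overline{Ric}^{F}=(\Phi+bf^{2})g_{F}$. Separation of variables then only yields that $\Phi+bf^{2}$ is a constant $\lambda$, i.e. a second-order equation $f\Delta_{B}f-|grad_{B}f|_{B}^{2}+(1-\overline{n})fPf+(a+b)f^{2}=\lambda$ on $B$, which, in the absence of any compactness or sign hypothesis in this theorem, does not visibly force $f$ to be constant. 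You flag this honestly rather than paper over it; note that the published proof fails at the same spot (its ``or'' choice of $V,W$ cannot kill the cross term at all when $n_{2}=2$), so your proposal is a strict refinement of the paper's argument, with the residual subcase being a defect inherited from the paper rather than one you introduced.
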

\begin{proof}
Consider in the second equation of $(24)$ that $V,W$ are orthogonal vector fields tangent to $F$ such that $g_{M}(V,U_{1})\neq0,\;g_{M}(W,U_{1})\neq0$ or $g_{M}(V,U_{2})\neq0,\;g_{M}(W,U_{2})\neq0.$  Then $g_{F}(V,W)=0,$ and we have
$$\overline{Ric}^{F}(V,W)=bf^{4}g_{F}(V,U_{1})g_{F}(W,U_{1}),  \eqno{(38)}$$ or
$$\overline{Ric}^{F}(V,W)=cf^{4}g_{F}(V,U_{2})g_{F}(W,U_{2}).  \eqno{(39)}$$
Taking in consideration the different domains of definition of the functions that appear in the equation (38) or (39), we obtain that $f$ is constant.
\end{proof}

\subsubsection{\normalsize{\bfseries When \boldmath $P\in\Gamma(TB),\;U_{1}\in\Gamma(TB),\;U_{2}\in\Gamma(TF).$}}
\begin{Theorem}
Let $M=B \times_{f}F$ be a generalized quasi-Einstein warped product with $B$ compact and connected, ${\rm dim}B=n_{1}\geqslant1,\;{\rm dim}F=n_{2}\geqslant2,\;{\rm dim}M=\overline n=n_{1}+n_{2},\;P\in\Gamma(TB)$ and $U_{1}\in\Gamma(TB),\;U_{2}\in\Gamma(TF).$
If $n_{1}=1,\;div_{B}P=c_{1}$ and $\pi(P)=c_{2}$ are both constants, then $M$ is a sample Riemannian product.
\end{Theorem}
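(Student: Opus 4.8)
The plan is to mirror the argument used for Theorems 4.1 and 4.4, reading the relevant scalar-curvature identity from the third case of Proposition 4.3 (the one matching $P\in\Gamma(TB),\;U_{1}\in\Gamma(TB),\;U_{2}\in\Gamma(TF)$), namely equation $(31)$. Its middle line reads
$$\overline{S}^{B}=n_{1}a+n_{2}\frac{\Delta_{B}f}{f}-n_{2}div_{B}P+n_{2}\pi(P)+b,$$
and the only feature distinguishing this case from the two earlier ones is the extra additive constant $b$, coming from $U_{1}$ lying in the base, whereas the $U_{2}$-contribution $c$ enters only $\overline{S}^{F}$ and not $\overline{S}^{B}$.

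First I would use the hypothesis $n_{1}=1$. Since $B$ is one-dimensional, its curvature tensor vanishes identically, so $\overline{S}^{B}=0$. Substituting this together with $n_{1}=1$ and the constants $div_{B}P=c_{1}$, $\pi(P)=c_{2}$ into the displayed identity yields
$$0=a+n_{2}\frac{\Delta_{B}f}{f}-n_{2}c_{1}+n_{2}c_{2}+b,$$
and solving for the Laplacian gives $\Delta_{B}f=c_{0}f$ with the constant $c_{0}=c_{1}-c_{2}-\frac{a+b}{n_{2}}$.

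Finally I would exploit the compactness and connectedness of $B$. Because $f>0$ everywhere, the relation $\Delta_{B}f=c_{0}f$ shows that $\Delta_{B}f$ has constant sign; since the integral of a Laplacian over a closed manifold vanishes, this is impossible unless $c_{0}=0$, so $\Delta_{B}f\equiv0$. Hence $f$ is harmonic and therefore constant on the compact connected base, and $M$ reduces to a simple Riemannian product. The computation is entirely routine; the only point requiring care is selecting the correct branch of Proposition 4.3, that is, recognizing that the mixed placement $U_{1}\in\Gamma(TB),\;U_{2}\in\Gamma(TF)$ contributes $+b$ (and not $+c$) to $\overline{S}^{B}$. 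Once the right identity is in hand, the argument is identical to the pattern already established for Theorems 4.1 and 4.4.
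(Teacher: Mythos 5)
Your proof is correct and is essentially identical to the paper's own argument: both read off the second equation of $(31)$ from Proposition 4.3, use $n_{1}=1$ to get $\overline{S}^{B}=0$, substitute $div_{B}P=c_{1}$ and $\pi(P)=c_{2}$ to obtain $\Delta_{B}f=c_{0}f$ with $c_{0}=c_{1}-c_{2}-\frac{a+b}{n_{2}}$, and conclude from the constant sign of the Laplacian on the compact connected base that $f$ is constant. Your extra remarks (the curvature of a one-dimensional base vanishes, and $\int_{B}\Delta_{B}f=0$ forces $c_{0}=0$) merely make explicit details the paper leaves implicit.
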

\begin{proof}
If $n_{1}=1,$ then $\overline{S}^{B}=0,$ by the second equation of $(31)$ and $div_{B}P=c_{1},\;\pi(P)=c_{2}$ we get
$$0=a+n_{2}\frac{\Delta_{B}f}{f}-n_{2}c_{1}+n_{2}c_{2}+b\Rightarrow \frac{\Delta_{B}f}{f}=c_{1}-c_{2}-\frac{a+b}{n_{2}}\triangleq c_{0}\Rightarrow \Delta_{B}f=c_{0}f.$$
Then the Laplacian has constant sign and hence $f$ is constant.
\end{proof}
\begin{Theorem}
Let $M=B \times_{f}F$ be a generalized quasi-Einstein warped product, ${\rm dim}B=n_{1}\geqslant2,\;{\rm dim}F=n_{2}\geqslant2,\;{\rm dim}M=\overline n=n_{1}+n_{2},\;P\in\Gamma(TB)$ and $U_{1}\in\Gamma(TB),\;U_{2}\in\Gamma(TF).$ If $c\neq0,$ then $M$ is a sample Riemannian product.
\end{Theorem}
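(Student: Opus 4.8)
The plan is to follow the template established by Theorems 3.5 and 4.8, exploiting the fact that here the second generator $U_{2}$ lies in $\Gamma(TF)$, so it contributes a rank-one term to the fibre Ricci tensor. The relevant identity is the second equation of $(25)$ from Proposition 4.1, case $3)$, namely
$$\overline{Ric}^{F}(V,W)=g_{F}(V,W)\big[f\Delta_{B}f+(1-n_{2})|grad_{B}f|_{B}^{2}+(1-\overline{n})fPf+af^{2}\big]+cf^{4}g_{F}(V,U_{2})g_{F}(W,U_{2}).$$
The idea is to choose the test vector fields on the fibre so that the scalar metric term $g_{F}(V,W)$ vanishes, thereby isolating the coefficient $c$ together with a factor of $f^{4}$ that lives on the base.

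First I would select, at an arbitrary point of $F$, two vector fields $V,W\in\Gamma(TF)$ that are $g_{F}$-orthogonal while each retains a nonzero component along $U_{2}$, that is $g_{M}(V,U_{2})\neq0$ and $g_{M}(W,U_{2})\neq0$. Such a pair exists precisely because $\dim F=n_{2}\geqslant2$ and $U_{2}$ is a unit field in $\Gamma(TF)$. With $g_{F}(V,W)=0$ the entire bracketed expression drops out, and the identity collapses to
$$\overline{Ric}^{F}(V,W)=cf^{4}g_{F}(V,U_{2})g_{F}(W,U_{2}).$$
On the left sits the Ricci tensor of the intrinsic fibre $(F,g_{F})$, a quantity depending only on the point of $F$; the two pairings $g_{F}(V,U_{2})$ and $g_{F}(W,U_{2})$ are likewise intrinsic to $F$ and, by our construction, both nonzero.

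The crux is the domain-of-definition observation already invoked in Theorems 3.5 and 4.8: the right-hand side carries the factor $f^{4}$, which is a function on the base $B$, whereas every other quantity in the displayed equation is a function on $F$ alone. Since $c\neq0$ and the two $U_{2}$-pairings are nonzero, the equality can persist over the whole product only if $f^{4}$, and hence $f$, is constant on $B$. Consequently the warping function is constant and $M=B\times_{f}F$ is a simply Riemannian product. I expect the only mildly delicate point to be the explicit construction of the orthogonal pair $V,W$ with prescribed nonvanishing $U_{2}$-components, which becomes routine once the hypothesis $n_{2}\geqslant2$ is used; the remainder is the same heuristic separation-of-variables argument that drives the earlier obstruction results.
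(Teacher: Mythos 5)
Your proposal is correct and coincides with the paper's own proof of this theorem: both isolate the rank-one term in the second equation of $(25)$ by choosing $g_{F}$-orthogonal $V,W\in\Gamma(TF)$ with $g_{M}(V,U_{2})\neq0$, $g_{M}(W,U_{2})\neq0$, arriving at $\overline{Ric}^{F}(V,W)=cf^{4}g_{F}(V,U_{2})g_{F}(W,U_{2})$, and then invoke the same domain-of-definition (separation of base and fibre variables) argument to force $f$ constant. Your added remarks on constructing the pair $V,W$ using $n_{2}\geqslant2$ only make explicit a step the paper leaves implicit.
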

\begin{proof}
Consider in the second equation of $(25)$ that $V,W$ are orthogonal vector fields tangent to $F$ such that $g_{M}(V,U_{2})\neq0,\;g_{M}(W,U_{2})\neq0.$  Then $g_{F}(V,W)=0,$ and we have
$$\overline{Ric}^{F}(V,W)=cf^{4}g_{F}(V,U_{2})g_{F}(W,U_{2}).  \eqno{(40)}$$
Taking in consideration the different domains of definition of the functions that appear in the equation (40), we obtain that $f$ is constant.
\end{proof}

\subsubsection{\normalsize {\bfseries When \boldmath $P\in\Gamma(TF),\;U_{1}\in\Gamma(TB),\;U_{2}\in\Gamma(TB).$}}
In the second equation of $(26),$ let:
$$f\Delta_{B}f+(1-n_{2})|grad_{B}f|_{B}^{2}+af^{2}\triangleq \alpha_{4}.   \eqno{(41)} $$
Then the equation $(26)$ becomes:
\begin{equation}
\begin{cases}\setcounter{equation}{42}
\overline{{Ric}}^{B}(X,Y)=ag_{B}(X,Y)-n_{2}\frac{H^f_{B}(X,Y)}{f}
+bg_{B}(X,U_{1})g_{B}(Y,U_{1})+cg_{B}(X,U_{2})g_{B}(Y,U_{2}),   \\
\overline{Ric}^{F}(V,W)=\alpha_{4}g_{F}(V,W)+(1-\overline n)g(W,\nabla_{V}P)+(\overline n-1)\pi(V)\pi(W),  \\
f\Delta_{B}f+(1-n_{2})|grad_{B}f|_{B}^{2}+af^{2}=\alpha_{4}.
\end{cases}
\end{equation}
\begin{Theorem}
Let $M=B \times_{f}F$ be a generalized quasi-Einstein warped product with $B$ compact and connected, ${\rm dim}B=n_{1}\geqslant1,\;{\rm dim}F=n_{2}\geqslant2,\;{\rm dim}M=\overline n=n_{1}+n_{2},\;P\in\Gamma(TF)$ and $U_{1}\in\Gamma(TB),\;U_{2}\in\Gamma(TB).$
If $n_{1}=1,$ then $M$ is a sample Riemannian product.
\end{Theorem}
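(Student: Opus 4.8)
The plan is to imitate the one-dimensional-base arguments already used for Theorems 3.4, 4.5 and 4.7, the decisive feature being that the hypothesis $n_{1}=1$ makes $B$ a one-dimensional Riemannian manifold, whose Levi--Civita scalar curvature vanishes identically; since here $P\in\Gamma(TF)$, the base carries no semi-symmetric correction in its $(B,B)$-block, so the relevant base curvature is exactly this Levi--Civita scalar curvature and therefore $S^{B}\equiv 0$. Compactness and connectedness of $B$ will then be used only at the final step, to turn an eigenvalue-type identity for $f$ into the conclusion that $f$ is constant.

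First I would select the scalar-curvature formula matching the present configuration $P\in\Gamma(TF)$, $U_{1}\in\Gamma(TB)$, $U_{2}\in\Gamma(TB)$, which is the second equation of the system $(32)$ in Proposition 4.4(1), namely $S^{B}=n_{1}a+n_{2}\frac{\Delta_{B}f}{f}+b+c$. Substituting $n_{1}=1$ and $S^{B}=0$ and solving for the Laplacian term yields
$$\frac{\Delta_{B}f}{f}=-\frac{a+b+c}{n_{2}}\triangleq c_{0},$$
a constant, so that $\Delta_{B}f=c_{0}f$ throughout $B$. This is the exact generalization of the single–$1$-form case of Theorem 3.4, now carrying both scalars $b$ and $c$.

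Finally I would invoke compactness. Since $f\colon B\to(0,\infty)$ is strictly positive, the right-hand side $c_{0}f$ has a fixed sign everywhere, so $\Delta_{B}f$ has constant sign; evaluating $\Delta_{B}f=c_{0}f$ at a maximum point of $f$ (where $\Delta_{B}f\geqslant 0$) and at a minimum point (where $\Delta_{B}f\leqslant 0$) on the compact $B$ forces $c_{0}\geqslant 0$ and $c_{0}\leqslant 0$, hence $c_{0}=0$. Then $\Delta_{B}f=0$, so $f$ is harmonic, and a harmonic function on a compact connected manifold is constant; therefore $M$ is a simple Riemannian product. I expect no genuine obstacle here, since this is the most routine of the six cases: the only points that need care are the vanishing of $S^{B}$ in dimension one and the choice of the correct system $(32)$ — rather than $(33)$ or $(34)$ — for the placement $P\in\Gamma(TF)$, $U_{1},U_{2}\in\Gamma(TB)$.
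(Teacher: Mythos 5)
Your proposal is correct and follows exactly the paper's own route: the paper likewise notes that $n_{1}=1$ forces $S^{B}=0$, substitutes into the second equation of $(32)$ to obtain $\Delta_{B}f=c_{0}f$ with $c_{0}=-\frac{a+b+c}{n_{2}}$, and concludes from the constant sign of the Laplacian on the compact connected base that $f$ is constant. Your final paragraph merely spells out in detail (via maximum and minimum points forcing $c_{0}=0$, hence $f$ harmonic and constant) what the paper compresses into one sentence, so there is no substantive difference.
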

\begin{proof}
If $n_{1}=1,$ then ${S}^{B}=0,$ by the second equation of $(32)$ we can get
$$0=a+n_{2}\frac{\Delta_{B}f}{f}+b+c\Rightarrow \frac{\Delta_{B}f}{f}=-\frac{a+b+c}{n_{2}}\triangleq c_{0}\Rightarrow
\Delta_{B}f=c_{0}f.$$
Then the Laplacian has constant sign and hence $f$ is constant.
\end{proof}

\begin{Theorem}
Let $M=B \times_{f}F$ be a generalized quasi-Einstein warped product with $B$ compact and connected, ${\rm dim}B=n_{1}\geqslant2,\;{\rm dim}F=n_{2}\geqslant2,\;{\rm dim}M=\overline n=n_{1}+n_{2},\;P\in\Gamma(TF)$ and $U_{1}\in\Gamma(TB),\;U_{2}\in\Gamma(TB).$
If $a\geqslant0,$ then $M$ is a sample Riemannian product.
\end{Theorem}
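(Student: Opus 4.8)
The plan is to reproduce, in this generalized setting, the argument already used for the quasi-Einstein counterpart Theorem 3.11. The key preliminary observation is that, since both generators $U_{1}$ and $U_{2}$ lie in $\Gamma(TB)$, the terms $bg_{B}(X,U_{1})g_{B}(Y,U_{1})$ and $cg_{B}(X,U_{2})g_{B}(Y,U_{2})$ affect only $\overline{Ric}^{B}$; because $g(V,U_{i})=0$ for every $V\in\Gamma(TF)$, they do not enter the fiber equation. Consequently the scalar $\alpha_{4}$ introduced in $(41)$ keeps exactly the form it has in the Einstein case, and the relation to exploit is $f\Delta_{B}f+(1-n_{2})|grad_{B}f|_{B}^{2}+af^{2}=\alpha_{4}$.

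First I would use the compactness of $B$ to choose a point $z\in B$ at which $f$ attains its maximum; there $grad_{B}f(z)=0$, and since $\Delta_{B}f=-Tr(H^{f}_{B})$ we also have $\Delta_{B}f(z)\geqslant0$. Evaluating $(41)$ at $z$ gives $f(z)\Delta_{B}f(z)+af^{2}(z)=\alpha_{4}$, and subtracting this from $(41)$ at an arbitrary point of $B$ yields
$$f\Delta_{B}f=f(z)\Delta_{B}f(z)+(n_{2}-1)|grad_{B}f|_{B}^{2}+a\big[f^{2}(z)-f^{2}\big].$$
Each summand on the right is nonnegative: $f(z)>0$ and $\Delta_{B}f(z)\geqslant0$; the factor $n_{2}-1\geqslant1$ because $n_{2}\geqslant2$; and $a\geqslant0$ together with $f^{2}(z)\geqslant f^{2}$ forces $a[f^{2}(z)-f^{2}]\geqslant0$. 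Hence $f\Delta_{B}f\geqslant0$ on all of $B$, and since $f>0$ this gives $\Delta_{B}f\geqslant0$ everywhere.

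To finish I would invoke the standard fact that on a compact connected manifold $\int_{B}\Delta_{B}f\,dV_{B}=0$, so a Laplacian of constant sign must vanish identically; then $f$ is harmonic on a compact connected manifold and therefore constant, whence $M$ is a simply Riemannian product. I expect no genuine difficulty: the entire content lies in the sign bookkeeping of the preceding paragraph, and the only point that distinguishes this case from the pure quasi-Einstein one is verifying that the extra $c$-term stays confined to $\overline{Ric}^{B}$ and never contaminates $\alpha_{4}$, which follows immediately from $U_{2}\in\Gamma(TB)$.
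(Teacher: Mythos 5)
Your proposal is correct and follows essentially the same route as the paper's own proof: evaluate the defining relation $(41)$ for $\alpha_{4}$ at a maximum point $z$ of $f$ on the compact base, subtract to obtain $f\Delta_{B}f=f(z)\Delta_{B}f(z)+(n_{2}-1)|grad_{B}f|_{B}^{2}+a\big[f^{2}(z)-f^{2}\big]\geqslant0$, and conclude from the constant sign of the Laplacian that $f$ is constant. Your preliminary remark that $U_{1},U_{2}\in\Gamma(TB)$ keeps the $b$- and $c$-terms out of the fiber equation, and your explicit appeal to $\int_{B}\Delta_{B}f\,dV_{B}=0$, merely spell out steps the paper leaves implicit.
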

\begin{proof}
Let $z\in B$ such that $f(z)$ is the maximum of $f$ on $B.$ Then $grad_{B}f(z)=0$ and $\Delta_{B}f(z)\geqslant0.$
Writing the equation $(41)$ in the point $z$ we obtain:
$$f(z)\Delta_{B}f(z)+af^{2}(z)=\alpha_{4}.   \eqno{(43)}$$
By equations $(41)$ and $(43),$ we have:
$$ f(z)\Delta_{B}f(z)+af^{2}(z)=f\Delta_{B}f+(1-n_{2})|grad_{B}f|_{B}^{2}+af^{2}$$
$$ \Rightarrow f\Delta_{B}f=f(z)\Delta_{B}f(z)+(n_{2}-1)|grad_{B}f|_{B}^{2}+a\big[f^{2}(z)-f^{2}\big]\geqslant0$$
Then the Laplacian has constant sign and hence $f$ is constant.
\end{proof}

\begin{Theorem}
Let $M=B \times_{f}F$ be a generalized quasi-Einstein warped product with $B$ compact and connected, ${\rm dim}B=n_{1}\geqslant2,\;{\rm dim}F=n_{2}\geqslant2,\;{\rm dim}M=\overline n=n_{1}+n_{2},\;P\in\Gamma(TF)$ and $U_{1}\in\Gamma(TB),\;U_{2}\in\Gamma(TB).$
If $a<0,\;div_{F}P=0,\;\pi(P)=0,$ and $S^{F}\geqslant0$, then $M$ is a sample Riemannian product.
\end{Theorem}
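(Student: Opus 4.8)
The plan is to replicate, term for term, the argument used for the corresponding quasi-Einstein result (the last theorem in the case $P\in\Gamma(TF),\,U\in\Gamma(TB)$), now reading the fibre scalar curvature off the system (32) and the abbreviation (41) in place of (13) and (19). The whole proof is a sign-chasing reduction that turns the hypotheses into the statement ``$\Delta_{B}f$ has constant sign,'' after which compactness of $B$ closes the argument.

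First I would record the fibre scalar curvature for the present configuration $P\in\Gamma(TF),\,U_{1},U_{2}\in\Gamma(TB)$, which is the third line of (32):
$$S^{F}=n_{2}\big[f\Delta_{B}f+(1-n_{2})|grad_{B}f|_{B}^{2}+af^{2}\big]+(1-\overline{n})div_{F}P+(\overline{n}-1)\pi(P).$$
The two hypotheses $div_{F}P=0$ and $\pi(P)=0$ are chosen precisely to annihilate the last two terms, so that $S^{F}=n_{2}\alpha_{4}$ with $\alpha_{4}$ as defined in (41). Since $n_{2}\geqslant2$ and $S^{F}\geqslant0$, this forces $\alpha_{4}\geqslant0$.

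Next I would return to the defining relation (41) and solve it for $f\Delta_{B}f+af^{2}$, obtaining
$$f\Delta_{B}f+af^{2}=\alpha_{4}+(n_{2}-1)|grad_{B}f|_{B}^{2}.$$
On the right both summands are nonnegative: $\alpha_{4}\geqslant0$ by the previous step, and $(n_{2}-1)|grad_{B}f|_{B}^{2}\geqslant0$ because $n_{2}\geqslant2$. Hence $f\Delta_{B}f+af^{2}\geqslant0$, and dividing by $f>0$ while using $a<0$ gives $\Delta_{B}f\geqslant-af>0$; thus $\Delta_{B}f$ has constant sign on $B$. The conclusion then follows from the compactness step used throughout the paper: because $B$ is compact and connected one has $\int_{B}\Delta_{B}f\,dV_{B}=0$, so a Laplacian of one sign must vanish identically, whence $f$ is constant and $M=B\times_{f}F$ is a simple Riemannian product.

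I anticipate no genuine difficulty, as this is a direct transcription of the quasi-Einstein case with $\alpha_{2}$ replaced by $\alpha_{4}$ and (13),(19) replaced by (32),(41). The only step deserving attention is bookkeeping: one must check that the extra connection terms $(1-\overline{n})div_{F}P$ and $(\overline{n}-1)\pi(P)$ in (32) are exactly the ones killed by the hypotheses $div_{F}P=0$ and $\pi(P)=0$, after which the reduction $S^{F}=n_{2}\alpha_{4}$ and everything downstream is immediate.
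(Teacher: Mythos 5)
Your proposal matches the paper's proof essentially line for line: both use the third equation of (32) with $div_{F}P=0$ and $\pi(P)=0$ to obtain $S^{F}=n_{2}\alpha_{4}\geqslant 0$, rewrite (41) as $f\Delta_{B}f+af^{2}=\alpha_{4}+(n_{2}-1)|grad_{B}f|_{B}^{2}\geqslant 0$, and conclude $\Delta_{B}f\geqslant -af>0$, forcing $f$ to be constant on the compact connected base. Your added remark that $\int_{B}\Delta_{B}f\,dV_{B}=0$ merely spells out the paper's closing step ``the Laplacian has constant sign, hence $f$ is constant.''
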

\begin{proof}
Consider that $div_{F}P=0,\;\pi(P)=0,$ then from the third equation of $(32)$ and the equation $(41),$ we get $S^{F}=n_{2}\alpha_{4}.$ Since $S^{F}\geqslant0,$ it follows
that $\alpha_{4}\geqslant0.$ Then the equation $(41)$ becomes:
$$f\Delta_{B}f+af^{2}=\alpha_{4}+(n_{2}-1)|grad_{B}f|_{B}^{2}\geqslant0
\Rightarrow f\Delta_{B}f+af^{2}\geqslant0 \Rightarrow\Delta_{B}f\geqslant-af>0.$$
So $f$ is constant.
\end{proof}
\subsubsection{\normalsize {\bfseries When \boldmath $P\in\Gamma(TF),\;U_{1}\in\Gamma(TF),\;U_{2}\in\Gamma(TF).$}}
\begin{Theorem}
Let $M=B \times_{f}F$ be a generalized quasi-Einstein warped product with $B$ compact and connected, ${\rm dim}B=n_{1}\geqslant1,\;{\rm dim}F=n_{2}\geqslant2,\;{\rm dim}M=\overline n=n_{1}+n_{2},\;P\in\Gamma(TF)$ and $U_{1}\in\Gamma(TF),\;U_{2}\in\Gamma(TF).$
If $n_{1}=1,$ then $M$ is a sample Riemannian product.
\end{Theorem}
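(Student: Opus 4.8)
The plan is to reduce everything to the scalar curvature of the base by exploiting the hypothesis $n_{1}=1$. A connected one-dimensional Riemannian manifold is automatically flat, so its scalar curvature vanishes identically; thus I would begin by recording that $S^{B}=0$. Since in this case $P\in\Gamma(TF)$ and both generators $U_{1},U_{2}$ lie in $\Gamma(TF)$, the applicable scalar-curvature expression is part $2)$ of Proposition 4.4, i.e. equation $(33)$, whose second line reads $S^{B}=n_{1}a+n_{2}\frac{\Delta_{B}f}{f}$. The point is that with $U_{1},U_{2}\in\Gamma(TF)$ the quadratic terms in $b$ and $c$ are absorbed into $S^{F}$ and never appear in $S^{B}$, so the base scalar curvature stays as clean as in the Einstein case.

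Next I would substitute $S^{B}=0$ and $n_{1}=1$ into this identity. This yields $0=a+n_{2}\frac{\Delta_{B}f}{f}$, hence $\frac{\Delta_{B}f}{f}=-\frac{a}{n_{2}}\triangleq c_{0}$ is a constant and therefore $\Delta_{B}f=c_{0}f$. This is exactly the pointwise equation obtained in the parallel quasi-Einstein statement (Theorem 3.13): here equation $(33)$ simply plays the role that equation $(14)$ played there, and the value of $c_{0}$ is unchanged because $b$ and $c$ have dropped out of $S^{B}$.

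The final step is the compactness conclusion. Since $f:B\to(0,\infty)$ is strictly positive, $c_{0}f$ has constant sign, so $\Delta_{B}f$ has constant sign on $B$. On a compact connected manifold the divergence theorem forces $\int_{B}\Delta_{B}f\,dV=0$; combined with $\int_{B}\Delta_{B}f\,dV=c_{0}\int_{B}f\,dV$ and $f>0$, this gives $c_{0}=0$. Then $\Delta_{B}f=0$, so $f$ is harmonic on a compact connected manifold and hence constant, making $M$ a simply Riemannian product. I do not anticipate any genuine obstacle here: the entire content is the observation that $n_{1}=1$ kills $S^{B}$, together with the standard Hopf-type fact that a Laplacian of constant sign on a closed manifold must vanish. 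The argument is routine and mirrors the earlier $n_{1}=1$ theorems almost verbatim.
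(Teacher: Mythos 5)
Your proposal is correct and follows essentially the same route as the paper: you use $n_{1}=1$ to get $S^{B}=0$, substitute into the second equation of $(33)$ to obtain $\Delta_{B}f=c_{0}f$ with $c_{0}=-\frac{a}{n_{2}}$, and conclude via the sign of the Laplacian on the compact connected base. Your explicit integration argument ($\int_{B}\Delta_{B}f\,dV=0$ forcing $c_{0}=0$, hence $f$ harmonic and constant) merely spells out the step the paper compresses into ``the Laplacian has constant sign and hence $f$ is constant.''
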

\begin{proof}
If $n_{1}=1,$ then ${S}^{B}=0,$ by the second equation of $(33)$ we can get
$$0=a+n_{2}\frac{\Delta_{B}f}{f}\Rightarrow \frac{\Delta_{B}f}{f}=-\frac{a}{n_{2}}\triangleq c_{0}\Rightarrow
\Delta_{B}f=c_{0}f.$$
Then the Laplacian has constant sign and hence $f$ is constant.
\end{proof}
\begin{Theorem}
Let $M=B \times_{f}F$ be a generalized quasi-Einstein warped product, ${\rm dim}B=n_{1}\geqslant2,\;{\rm dim}F=n_{2}\geqslant2,\;{\rm dim}M=\overline n=n_{1}+n_{2},\;P\in\Gamma(TF)$ and $U_{1}\in\Gamma(TF),\;U_{2}\in\Gamma(TF).$
If $b\neq0$ or $c\neq0,$ and $P$ is parallel on $F$ with respect to the Levi-Civita connection on $F,$ then $M$ is a sample Riemannian product.
\end{Theorem}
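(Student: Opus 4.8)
The plan is to adapt the proof of Theorem 3.11, the quasi-Einstein ($c=0$) special case of the present statement, and to absorb the second generator $U_{2}$ by a suitable choice of test vectors. The governing identity is the second equation of $(27)$, which expresses $Ric^{F}(V,W)$ for $V,W\in\Gamma(TF)$ as the sum of a term proportional to $g_{F}(V,W)$, a term $(1-\overline n)g(W,\nabla_{V}P)$, a term $(\overline n-1)\pi(V)\pi(W)$, and the two generator terms $bf^{4}g_{F}(V,U_{1})g_{F}(W,U_{1})$ and $cf^{4}g_{F}(V,U_{2})g_{F}(W,U_{2})$. The hypothesis ``$b\neq0$ or $c\neq0$'' naturally splits the argument into two symmetric sub-cases, and in each I would drive every term to zero except a single generator term of the form $(\text{constant})\cdot f^{4}\cdot(\text{inner products on }F)$.

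First I would treat the case $b\neq0$. I would select $V,W\in\Gamma(TF)$ that are $g_{F}$-orthogonal, so that $g_{F}(V,W)=0$ and the first term drops out, and that moreover satisfy $g_{M}(V,U_{1})\neq0$, $g_{M}(W,U_{1})\neq0$, together with the two vanishing conditions $\pi(W)=0$ and $g_{M}(W,U_{2})=0$. Since $P$ is parallel on $F$ with respect to the Levi-Civita connection of $F$, we have $\nabla_{V}P=0$, so the term $(1-\overline n)g(W,\nabla_{V}P)$ vanishes; the choice $\pi(W)=0$ removes $(\overline n-1)\pi(V)\pi(W)$; and $g_{M}(W,U_{2})=0$, equivalently $g_{F}(W,U_{2})=0$, annihilates the $c$-term. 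What survives is
$$Ric^{F}(V,W)=bf^{4}g_{F}(V,U_{1})g_{F}(W,U_{1}),$$
precisely the analogue of equation $(22)$. The case $c\neq0$ is identical after interchanging the roles of $U_{1}$ and $U_{2}$ and of $b$ and $c$: one imposes $g_{M}(W,U_{1})=0$ to suppress the $b$-term and arrives at $Ric^{F}(V,W)=cf^{4}g_{F}(V,U_{2})g_{F}(W,U_{2})$.

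The conclusion then follows from the same ``different domains of definition'' reasoning applied earlier to equations such as $(22)$ and $(40)$: the left-hand side $Ric^{F}(V,W)$ and the fibre inner products $g_{F}(V,U_{1}),g_{F}(W,U_{1})$ are intrinsic to $F$ and independent of the base point, while $f^{4}$ is a function on $B$ alone. Since the identity holds at every point of $B\times F$, fixing a fibre point at which $g_{F}(V,U_{1})g_{F}(W,U_{1})\neq0$ and varying the base point forces $f^{4}$, and hence $f$, to be constant; by Definition 2.1 this means $M$ is a simple Riemannian product.

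I expect the genuinely delicate point to be not the computation but the feasibility of the vector-field selection: one must verify that $g_{F}(V,W)=0$, $\pi(W)=0$ and $g_{M}(W,U_{2})=0$ can be imposed simultaneously while keeping $g_{M}(V,U_{1})\neq0$ and $g_{M}(W,U_{1})\neq0$. Working pointwise in a fibre tangent space of dimension $n_{2}\geqslant2$, these are a few linear orthogonality constraints on $W$ whose common solution space must still meet the open condition $g_{F}(W,U_{1})\neq0$; because $U_{1}$ and $U_{2}$ are orthogonal unit fields, this is a dimension count that I would need to confirm does not collapse, and any residual degenerate configuration (for example when $P$, $U_{1}$, $U_{2}$ fail to be suitably independent in low dimension) would have to be treated separately. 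This selection issue, rather than any identity, is the main obstacle.
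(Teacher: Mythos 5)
Your proposal follows the paper's own proof of this theorem almost verbatim: the paper likewise takes orthogonal $V,W$ tangent to $F$ with $g_{M}(V,U_{i})\neq0$, $g_{M}(W,U_{i})\neq0$ and $\pi(W)=0$, kills the term $(1-\overline n)g(W,\nabla_{V}P)$ by the parallelism of $P$, and concludes from the reduced identity $Ric^{F}(V,W)=bf^{4}g_{F}(V,U_{1})g_{F}(W,U_{1})$ (or its $c$-analogue) by the same ``different domains of definition'' reasoning. In one respect you are more careful than the paper: in the case $b\neq0$ the paper's equation $(44)$ silently discards the term $cf^{4}g_{F}(V,U_{2})g_{F}(W,U_{2})$, which does not vanish under the choices actually stated there; your extra condition $g_{M}(W,U_{2})=0$ is exactly what is needed to justify that step, and symmetrically for $(45)$.

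However, the feasibility issue you flag at the end is not a formality: it is a genuine gap, and in the extremal case it is fatal to the selection as you stated it. If $n_{2}=2$, the fibre tangent space at a point is spanned by $U_{1},U_{2}$; your condition $g_{F}(W,U_{2})=0$ forces $W$ to be proportional to $U_{1}$, and then $g_{F}(V,W)=0$ forces $g_{F}(V,U_{1})=0$, contradicting the requirement $g_{M}(V,U_{1})\neq0$. So the strengthened choice of $V,W$ is impossible precisely in the lowest dimension the hypothesis allows ($n_{2}\geqslant2$), and even for $n_{2}\geqslant3$ it fails pointwise wherever $U_{1}\in\mathrm{span}\{P,U_{2}\}$, since then every $W$ orthogonal to both $P$ and $U_{2}$ is automatically orthogonal to $U_{1}$. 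Closing the argument therefore requires genuine casework that your proposal defers: for instance, when $n_{2}=2$ one must keep both generator terms and choose $V,W$ making the combination $bg_{F}(V,U_{1})g_{F}(W,U_{1})+cg_{F}(V,U_{2})g_{F}(W,U_{2})$ nonzero at a point, and when $P\in\mathrm{span}\{U_{1},U_{2}\}$ the term $(\overline n-1)\pi(V)\pi(W)$ can be absorbed into the generator terms rather than annihilated. To be fair, the paper's own proof never addresses feasibility at all (its fewer constraints still demand $\pi(W)=0$ alongside $g_{M}(V,U_{i})\neq0,\;g_{M}(W,U_{i})\neq0$, which is likewise not always available), so your write-up reproduces the paper's argument, repairs one of its oversights, and honestly flags --- but does not resolve --- a defect the paper leaves unexamined.
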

\begin{proof}
Consider in the second equation of $(27)$ that $V,W$ are orthogonal vector fields tangent to $F$ such that $g_{M}(V,U_{1})\neq0,\;g_{M}(W,U_{1})\neq0$ and $\pi(W)=0,$ or $g_{M}(V,U_{2})\neq0,\;g_{M}(W,U_{2})\neq0$ and $\pi(W)=0,$ Then $g_{F}(V,W)=0.$ Since $P$ is parallel on $F$ with respect to the Levi-Civita connection on $F,$ we have $\nabla_{V}P=0,$ so the second equation of $(27)$ becomes:
$$Ric^{F}(V,W)=bf^{4}g_{F}(V,U_{1})g_{F}(W,U_{1}),  \eqno{(44)}$$
or $$Ric^{F}(V,W)=cf^{4}g_{F}(V,U_{2})g_{F}(W,U_{2}).  \eqno{(45)}$$
Taking in consideration the different domains of definition of the functions that appear in the equation (44) or (45), we obtain that $f$ is constant.
\end{proof}

\subsubsection{\normalsize {\bfseries When \boldmath $P\in\Gamma(TF),\;U_{1}\in\Gamma(TB),\;U_{2}\in\Gamma(TF).$}}
\begin{Theorem}
Let $M=B \times_{f}F$ be a generalized quasi-Einstein warped product with $B$ compact and connected, ${\rm dim}B=n_{1}\geqslant1,\;{\rm dim}F=n_{2}\geqslant2,\;{\rm dim}M=\overline n=n_{1}+n_{2},\;P\in\Gamma(TF)$ and $U_{1}\in\Gamma(TB),\;U_{2}\in\Gamma(TF).$
If $n_{1}=1,$ then $M$ is a sample Riemannian product.
\end{Theorem}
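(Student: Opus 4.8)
The plan is to imitate the one-dimensional-base arguments already carried out in the parallel cases with $P\in\Gamma(TF)$, namely Theorems 4.12 and 4.15 (and, in the quasi-Einstein setting, Theorem 3.10), reducing the claim to a single scalar-curvature identity on the base and then applying a compactness argument. No new idea is needed; only the correct case of Proposition 4.4 must be selected.

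First I would exploit that $n_1=\dim B=1$. A one-dimensional Riemannian manifold carries no curvature, so its intrinsic scalar curvature vanishes: $S^B=0$. The hypotheses $P\in\Gamma(TF)$, $U_1\in\Gamma(TB)$, $U_2\in\Gamma(TF)$ place us in case $3)$ of Proposition 4.4, whose second line reads $S^B=n_1a+n_2\frac{\Delta_Bf}{f}+b$ (equation $(34)$). Substituting $n_1=1$ and $S^B=0$ gives
$$0=a+n_2\frac{\Delta_Bf}{f}+b\;\Longrightarrow\;\frac{\Delta_Bf}{f}=-\frac{a+b}{n_2}\triangleq c_0\;\Longrightarrow\;\Delta_Bf=c_0f,$$
with $c_0$ a constant. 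I would stress that only the base-tangent generator $U_1\in\Gamma(TB)$ produces the additive term $b$ here; the fibre-tangent generator $U_2\in\Gamma(TF)$ instead feeds the fibre scalar curvature $S^F$ through the summand $cf^2$ and is absent from the base relation, so $c_0$ depends on $a$ and $b$ but not on $c$.

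The final step is the compactness argument, verbatim as in the preceding theorems. Since $f:B\to(0,\infty)$ is strictly positive and $\Delta_Bf=c_0f$, the Laplacian $\Delta_Bf$ has constant sign (that of $c_0$) throughout $B$. On the compact manifold $B$ the integral of any Laplacian vanishes, so $c_0\int_B f\,dV_B=0$; positivity of $f$ forces $c_0=0$, whence $\Delta_Bf\equiv0$ and $f$ is harmonic. Because $B$ is compact and connected, a harmonic function is constant, so $f$ is constant and $M=B\times_fF$ reduces to a simple Riemannian product.

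I do not anticipate a genuine obstacle: the proof is routine once the right line of Proposition 4.4 is isolated. The only point demanding mild care is the bookkeeping of the two generators, i.e. ensuring that $U_1\in\Gamma(TB)$ (and not $U_2\in\Gamma(TF)$) is what contributes the term $b$ to $S^B$, so that the correct constant is $c_0=-(a+b)/n_2$ rather than $-(a+b+c)/n_2$. With that settled, everything else is the standard principle that a Laplacian of constant sign on a compact manifold forces the function to be constant.
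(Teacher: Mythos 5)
Your proposal is correct and follows essentially the same route as the paper's own proof: isolating the second equation of $(34)$ from Proposition 4.4, case $3)$, setting $S^{B}=0$ for $n_{1}=1$ to obtain $\Delta_{B}f=c_{0}f$ with $c_{0}=-\frac{a+b}{n_{2}}$, and concluding by the constant-sign-of-the-Laplacian argument on the compact connected base. Your explicit integration step justifying why $c_{0}=0$ (and hence $f$ constant) merely spells out what the paper leaves implicit, so there is nothing to correct.
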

\begin{proof}
If $n_{1}=1,$ then ${S}^{B}=0,$ by the second equation of $(34)$ we can get
$$0=a+n_{2}\frac{\Delta_{B}f}{f}+b\Rightarrow \frac{\Delta_{B}f}{f}=-\frac{a+b}{n_{2}}\triangleq c_{0}\Rightarrow
\Delta_{B}f=c_{0}f.$$
Then the Laplacian has constant sign and hence $f$ is constant.
\end{proof}
\begin{Theorem}
Let $M=B \times_{f}F$ be a generalized quasi-Einstein warped product, ${\rm dim}B=n_{1}\geqslant2,\;{\rm dim}F=n_{2}\geqslant2,\;{\rm dim}M=\overline n=n_{1}+n_{2},\;P\in\Gamma(TF)$ and $U_{1}\in\Gamma(TB),\;U_{2}\in\Gamma(TF).$
If $c\neq0,$ and $P$ is parallel on $F$ with respect to the Levi-Civita connection on $F,$ then $M$ is a sample Riemannian product.
\end{Theorem}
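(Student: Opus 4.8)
The plan is to follow the pattern of the companion results (Theorems 3.9 and 4.9) and extract the obstruction from the fibre component of the Ricci tensor, namely the second equation of $(28)$. Since in this case $U_{1}\in\Gamma(TB)$ while $U_{2}\in\Gamma(TF)$, the $b$-term has already dropped out of the fibre equation (because $g(V,U_{1})=g(W,U_{1})=0$ whenever $V,W\in\Gamma(TF)$ and $U_{1}\in\Gamma(TB)$), leaving only the $c$-term $cf^{4}g_{F}(V,U_{2})g_{F}(W,U_{2})$ as the potential obstruction; this is precisely why the hypothesis $c\neq0$ is the relevant one here.

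First I would choose $V,W\in\Gamma(TF)$ to be orthogonal with $g_{M}(V,U_{2})\neq0$, $g_{M}(W,U_{2})\neq0$ and $\pi(W)=0$. Orthogonality gives $g_{F}(V,W)=0$, so the bracketed scalar term in $(28)$ vanishes, and $\pi(W)=0$ kills the term $(\overline n-1)\pi(V)\pi(W)$. Next, the parallelism hypothesis on $P$ gives $\nabla_{V}P=0$ with respect to the Levi-Civita connection of $F$, so the term $(1-\overline n)g(W,\nabla_{V}P)$ also vanishes. The second equation of $(28)$ then collapses to
$$Ric^{F}(V,W)=cf^{4}g_{F}(V,U_{2})g_{F}(W,U_{2}).$$

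The decisive step is a separation-of-variables argument on the product $B\times F$. The left-hand side $Ric^{F}(V,W)$ and the factor $g_{F}(V,U_{2})g_{F}(W,U_{2})$ depend only on the fibre coordinate, whereas $f^{4}$ depends only on the base coordinate; evaluating the identity at a point of $F$ where $g_{F}(V,U_{2})g_{F}(W,U_{2})\neq0$ and using $c\neq0$ forces $f$ to be constant, so by Definition 2.1 the warped product $M$ is a simply Riemannian product. I expect the delicate point — the main obstacle — to be the verification that test fields $V,W$ satisfying the three conditions simultaneously exist and are non-vanishing on the relevant region; this is where $n_{2}\geqslant2$ enters, and it is the step that the ``different domains of definition'' phrase in the analogous proofs glosses over.
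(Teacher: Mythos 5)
Your proposal is correct and follows essentially the same route as the paper: the paper's proof likewise takes $V,W$ orthogonal with $g_{M}(V,U_{2})\neq0$, $g_{M}(W,U_{2})\neq0$ and $\pi(W)=0$, uses parallelism to kill $g(W,\nabla_{V}P)$, reduces the second equation of $(28)$ to $Ric^{F}(V,W)=cf^{4}g_{F}(V,U_{2})g_{F}(W,U_{2})$, and concludes by the base/fibre separation of variables that its phrase ``different domains of definition'' encodes. You merely make explicit two points the paper leaves implicit --- the separation argument itself and the existence of admissible test fields $V,W$ --- which is a fair observation but not a departure from the paper's method.
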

\begin{proof}
Consider in the second equation of $(28)$ that $V,W$ are orthogonal vector fields tangent to $F$ such that $g_{M}(V,U_{2})\neq0,\;g_{M}(W,U_{2})\neq0$ and $\pi(W)=0,$ Then $g_{F}(V,W)=0.$ Since $P$ is parallel on $F$ with respect to the Levi-Civita connection on $F,$ we have $\nabla_{V}P=0,$ so the second equation of $(28)$ becomes:
$$Ric^{F}(V,W)=cf^{4}g_{F}(V,U_{2})g_{F}(W,U_{2}).  \eqno{(46)}$$
Taking in consideration the different domains of definition of the functions that appear in the equation (46), we obtain that $f$ is constant.
\end{proof}

{\bfseries Acknowledgement.}
We would like to thank the referee for his(her) careful reading and helpful comments.

% ²Î¿¼ÎÄÏ×

\clearpage

\begin{thebibliography}{999}
\bibitem{1}
  R. Bishop, B. O'Neill, Manifolds of negative curvature, Trans. Am. Math. Soc.145(1969) 1-49.
\bibitem{2}
  F. Dobarro, E. Dozo, Scalar curvature and warped products of Riemannian manifolds, Trans. Am. Math. Soc. 303(1987) 161-168.
\bibitem{4}
  M. C. Chaki, R. K. Maity, On quasi-Einstein manifolds, Publ. Math. Debrecen, 57(2000), 297-306.
\bibitem{3}
  U. C. De, G. C. Ghosh, On generalized quasi-Einstein manifolds, Kyungpook Math. J., 44(2004), 607-615.
\bibitem{5}
  Dan Dumitru, On quasi-Einstein warped products, Jordan. J. Math. and Stat 5(2), 2012, pp. 85-95.
\bibitem{6}
  H. A. Hayden, Subspace of a space with torsion, Proc. Lond. Math. Soc. 34(1932) 27-50.
\bibitem{7}
  K. Yano, On semi-symmetric metric connection, Rev. Roumaine Math. Pures Appl. 15(1970) 1579-1586.
\bibitem{8}
  N. Agashe, M. Chafle, A semi-symmetric non-metric connection on a Riemannian manifold. Indian J. Pure Appl.Math.  23(6), 399-409 (1992)
\bibitem{9}
  N. Agashe, M. Chafle, On submanifolds of a Riemannian manifold with a semi-symmetric non-metric connection.
Tensor (N.S.) 55(2), 120-130 (1994)
\bibitem{10}
  S.Sular and C.\"Ozg\"ur, Warped products with a semi-symmetric non-metric connection, Arab. J. Sci. Eng. 36 (2011),
  no. 3, 461-473.

\end{thebibliography}
\end{document}